\newtheorem{theorem}{Theorem}[section]
\newtheorem{lemma}[theorem]{Lemma}
\newtheorem{proposition}[theorem]{Proposition}
\newtheorem{corollary}[theorem]{Corollary}
\theoremstyle{definition}
\newtheorem{definition}[theorem]{Definition}
\newtheorem{example}[theorem]{Example}
\newtheorem{notation}[theorem]{Notation}
\newtheorem{remark}[theorem]{Remark}
\newcommand{\B}[1]{\textit{T}_{#1}}
\DeclareMathOperator{\Zx}{\mathbb{Z}(\textit{x})}
\DeclareMathOperator{\Zxy}{\mathbb{Z}(\textit{x},\textit{y})}
\newcommand{\U}[1]{\textit{U}_{#1}\left(\frac{1}{x}\right)}
\newcommand{\gl}[1]{GL_{#1}(\Lambda)}
\newcommand{\gen}[1]{\textit{t}_{#1}}
\newcommand\scalemath[2]{\scalebox{#1}{\mbox{\ensuremath{\displaystyle #2}}}}
\newcommand{\subscript}[2]{$#1  #2$}
\numberwithin{equation}{section}
\newcommand\risSpdf[6]{\raisebox{#1pt}[#5pt][#6pt]{\begin{picture}(#4,15)(0,0)		
  \put(0,0){\includegraphics[width=#4pt]{#2.pdf}} #3
     \end{picture}}}
\newcommand\risS[6]{\raisebox{#1}[#5pt][#6pt]{\begin{picture}(#4,15)(0,0)		
  \put(0,0){\includegraphics[width=#4pt]{#2.pdf}} #3
     \end{picture}}}
\newcounter{braid}
\newcounter{strands}
\def\cross{%
  \@ifnextchar^{\message{Got sup}\cross@sup}{\cross@sub}}
\def\cross@sup^#1_#2{\render@cross{#2}{#1}}
\def\cross@sub_#1{\@ifnextchar^{\cross@@sub{#1}}{\render@cross{#1}{1}}}
\def\cross@@sub#1^#2{\render@cross{#1}{#2}}
\def\render@cross#1#2{
  \def\strand{#1}
  \def\crossing{#2}
  \pgfmathsetmacro{\cross@y}{-\value{braid}*\braid@h}
  \pgfmathtruncatemacro{\nextstrand}{#1+1}
  \foreach \thread in {1,...,\value{strands}}
  {
    \pgfmathsetmacro{\strand@x}{\thread * \braid@w}
    \ifnum\thread=\strand
    \pgfmathsetmacro{\over@x}{\strand * \braid@w + .5*(1 - \crossing) * \braid@w}
    \pgfmathsetmacro{\under@x}{\strand * \braid@w + .5*(1 + \crossing) * \braid@w}
    \draw[braid] \pgfkeysvalueof{/tikz/braid start} +(\under@x pt,\cross@y pt) to[out=-90,in=90] +(\over@x pt,\cross@y pt -\braid@h);
    \draw[braid] \pgfkeysvalueof{/tikz/braid start} +(\over@x pt,\cross@y pt) to[out=-90,in=90] +(\under@x pt,\cross@y pt -\braid@h);
    \else
    \ifnum\thread=\nextstrand
    \else
     \draw[braid] \pgfkeysvalueof{/tikz/braid start} ++(\strand@x pt,\cross@y pt) -- ++(0,-\braid@h);
    \fi
   \fi
  }
  \stepcounter{braid}
}
\tikzset{braid/.style={double=\pgfkeysvalueof{/tikz/braid colour},double distance=1pt,line width=0pt,white}} 
\newcommand{\braid}[2][]{%
  \begingroup
  \pgfkeys{/tikz/strands=2}
  \tikzset{#1}
  \pgfkeysgetvalue{/tikz/braid width}{\braid@w}
  \pgfkeysgetvalue{/tikz/braid height}{\braid@h}
  \setcounter{braid}{0}
  \let\sigma=\cross
  #2
  \endgroup
}
\newdimen{\tempx}
\newdimen{\tempy}
\tikzset{
    every picture/.style={
        remember picture,   
        inner xsep=0pt, 
        inner ysep=1pt, 
        baseline,       
        every node/.style={
            anchor=base 
        }
    }
}
\begin{document}

\title{An Alexander type invariant for doodles}
\author{B. Cisneros, M. Flores, J. Juyumaya and C. Roque-Márquez}
\date{\empty}

\maketitle

\begin{abstract}
We construct an Alexander type invariant for oriented doodles from a deformation of the Tits representation of the twin group and from the Chebyshev polynomials of second kind. Similar to the Alexander polynomial, our invariant vanishes on unlinked doodles with more than one component. We also include values of our invariant on several doodles.
\end{abstract}

{\footnotesize 2020 Mathematics Subject Classification. Primary: 57K14, 57K12, 20F36.}

{\footnotesize \textit{Keywords}: doodle; twin group; Coxeter group; Alexander polynomial; Chebyshev polynomials.}

\section{Introduction}

Doodles were firstly introduced by Fenn and Taylor \cite{FT79} as a finite collection of embedded circles on the 2-sphere with no triple intersections. Later, Khovanov \cite{Kh97} pointed out that it was more natural to consider doodles as immersed circles on the 2-sphere than as simple curves. In this manner, some concepts of knot theory were successfully transferred to doodle theory. In particular, he obtained an associated group to a doodle, called the doodle group, in analogue to the fundamental group of the complement of a link. 
In contrast with knot theory, equivalence of doodles just considers a planar version of the first and the second Reidemeister moves (see Figure \ref{fig:Reidemeister}); the lack of the third Reidemeister move leads to significant differences in comparison with classical knot theory. For instance, every class of a doodle has a unique representative with minimal number of crossings \cite{Kh97}; the calculation of Vassiliev invariants is more complicated than classical knots, but Vassiliev invariants classify doodles, problem that remains open for knots \cite{Mer03}. Extended to a more general setting, recent work by Bartholomew-Fenn-Kamada-Kamada \cite{BFKK18} consider doodles on closed oriented surfaces of any genus, which can be considered as virtual links analogue for doodles; in \cite{BFKK18-gauss} they give a complete invariant for virtual doodles\footnote{In Bartholomew's webpage \cite{BarWeb} is shown a table of virtual doodles up to 10 crossings.}.

The algebraic counterparts of doodles are the so-called \textit{twin groups}, terminology due to Khovanov. The role of these groups in doodle theory is similar to that of Artin's braid groups in knot theory. These groups have appeared under different names and contexts in the literature: {cartographical Grothendieck groups} \cite{Voe90}, {quantum symmetric groups} \cite{GGS92}, {flat braid groups}\footnote{This term was later used in \cite{KauLam04} to an entirely different object} \cite{Mer99}, {traid groups} \cite{HK20} and {planar braid groups} \cite{GoMeRo19,MosRo19}. 
Further, the twin group on $n$ strands, denoted by $\B{n}$, is a right angled Coxeter group generated by $n-1$ involutions and its elements can be depicted as planar braids on $n$ strands. The permutation induced by a planar braid defines a natural epimorphism from $\B{n}$ to the symmetric group on $n$ symbols, whose kernel is called the \textit{pure twin group}. So far, the twin group and pure twin group have been studied in the last years, though there still exist many open questions about these groups. For details on recent developments, refer to \cite{BSV18,DG18,GoMeRo19,HK20,KNS19-2,KNS19,MosRo19}. 

The first evidence of the close relationship between doodles and twins is by the straightforward process of obtaining a doodle from a given twin by `closing' the twin. In the reverse process, Khovanov \cite{Kh97} proved the Alexander theorem for doodles in which any doodle on the 2-sphere is the closure of a twin, and recently Gotin \cite{Got18} has established the Markov theorem for doodles on the 2-sphere in which he defines the corresponding Markov moves for twins. These two theorems invite to the construction of doodle invariants in a similar way it has been done in classical knot theory. For instance, the construction of an Alexander and Jones polynomials for doodles.

The purpose of this paper is the construction of an Alexander type invariant for doodles. We follow Burau's work to compute the Alexander polynomial for links via certain representation of the braid group \cite{Bur35}, applied in the context of twins and doodles.

More precisely, we define a representation $\psi_n:\B{n}\to GL_{n-1}(\mathbb{Z}(x))$ which is a deformation of the Tits representation of $\B{n}$. Then, using $\psi_n$ we construct a function $f_n:\B n\to \Zx$ which defines an invariant of doodles up to a factor $x^{2k}$, i.e., if two twins $\alpha$, $\beta$ have equivalent closures $\hat{\alpha} \sim \hat{\beta}$ as doodles, then their images $f_n(\alpha)$,  $f_m(\beta)$ differ by a multiple of $x^{2k}$ for some $k\in \mathbb{Z}$ (see Lemma \ref{lem:hyper-stabil}).
Thus, we obtain an invariant $\mathcal{Q}(D)$ for a doodle $D$ as the smallest degree polynomial over all images under functions $f_n$ of twins with closure $D$ (Theorem \ref{thm:inv Q}).
Furthermore, we verify that these functions satisfy the skein relation
    $$f_n\big( {\risSpdf{-9}{C121}{}{25}{0}{0}} \big) - f_n\big( {\risSpdf{-9}{C212}{}{25}{0}{0}} \big)
        = (x^2-1)\left(f_n\big( {\risSpdf{-9}{C1}{}{25}{0}{0}} \big) - f_n\big( {\risSpdf{-9}{C2}{}{25}{0}{0}} \big) \right).$$
It is worth noting that Chebyshev polynomials of second kind appeared unexpectedly in the construction of the functions $f_n$. Observe that it is not the first time where Chebyshev polynomials of second kind emerge in low dimensional topology. For instance, they play an important role defining Jones-Wenzl projectors \cite{Wang10}.

The paper is organized as follows. In Section \ref{sec:preliminaries}, we give some preliminaries on doodles and twins; we recall the Tits representation of a Coxeter group and list necessary properties of Chebyshev polynomials. In Section \ref{sec:Representation}, we study a deformation of the Tits representation of $\B{n}$ as a Coxeter group and some of its features. In Section \ref{sec:Invariant}, we investigate the behavior of the functions $f_n$ under Markov moves, obtaining the definition of our polynomial invariant $\mathcal{Q}$. Finally, we give some examples of computations in Section \ref{sec:Computations}. 

\subsection*{Acknowledgments}
The authors would like to thank Jacob Mostovoy and Jesús Gon\-zá\-lez for useful conversations. The first author was supported by Cátedras-CONACYT (project no. 61). The second author was partially supported by grant FONDECYT 11\-17\-03\-05. The third author was supported, in part, by grant FONDECYT 1180036. The fourth author was partially supported by a CONACYT Postdoctoral Fellowship. This work is a result of two Mexican-Chilean meetings celebrated at Oaxaca and at Valparaiso founded by FONDECYT 11\-17\-03\-05 and FONDECYT 1180036.

\section{Preliminaries}\label{sec:preliminaries}

In the present section, we give the necessary background on doodles, twins, Tits representation and Chebyshev polynomials that will be used along the paper. This section contains nothing essentially new.

\subsection{Doodles and twin group}

\begin{definition}[cf. Khovanov, \cite{Kh97}]\label{def:doodle}
    A \textit{doodle} is an immersion $D:\bigsqcup_n S^1 \to S^2$ of a disjoint union of $n$ circles into the 2-sphere\footnote{In \cite{BFKK18}, doodles in $S^2$ are called \textit{planar doodles}.} with no triple or higher multiple intersections; $n$ is the number of components. An \textit{oriented doodle} is a doodle in which each component is oriented. We denote the set of oriented doodles by $\mathcal{D}$.
\end{definition}

Throughout this work we will only be concerned with oriented doodles, hence we will omit the word oriented in most of the cases.

Two doodles $D$ and $D'$ are \textit{equivalent}, denoted by $D\sim D'$, if there exists a homotopy between $D$ and $D'$ in which no triple intersections are produced throughout the homotopy. 
Equivalently\footnote{On \textit{regular} doodles, i.e., all its multiple points are transversal double points \cite{BFKK18}.}, $D\sim D'$ if they can be transformed into each other through isotopies of $S^2$ and a finite sequence of local moves R1 and R2, see Figure \ref{fig:Reidemeister}. These moves will be called the first and the second Reidemeister moves.
    
\begin{figure}[h!]
    \begin{minipage}[t]{.45\textwidth}
        \centering
         \includegraphics[scale=0.9]{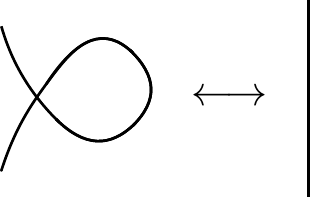}
        \subcaption{R1}\label{fig:R1}
    \end{minipage}
    \hfill
    \begin{minipage}[t]{.45\textwidth}
        \centering
        \includegraphics[scale=0.9]{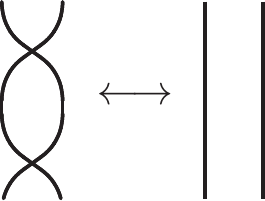}
        \subcaption{R2}\label{fig:R2}
    \end{minipage}
    \caption{Reidemeister moves.} \label{fig:Reidemeister}
\end{figure}

The doodle in Figure \ref{fig:trivial doodle} is called the trivial doodle. The first non-trivial doodle is the Borromean doodle, shown in Figure \ref{fig:borromean}, with six crossings and three components. Following \cite{BFKK18}, the doodle in Figure \ref{fig:4-poppy} is called the 4-poppy doodle and it has eight crossings; it is the first non-trivial 1-component doodle \cite[Theorem~4.2]{BFKK18}.

\begin{figure}[ht]
    \begin{minipage}[t]{.3\textwidth}
        \centering
        \includegraphics{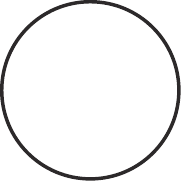}
        \subcaption{Trivial}\label{fig:trivial doodle}
    \end{minipage}
    \hfill
    \begin{minipage}[t]{.3\textwidth}
        \centering
        \includegraphics{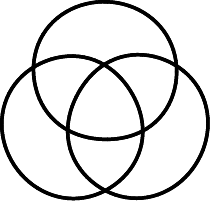}
        \subcaption{Borromean}\label{fig:borromean}
    \end{minipage}
    \hfill
    \begin{minipage}[t]{.3\textwidth}
        \centering
        \includegraphics{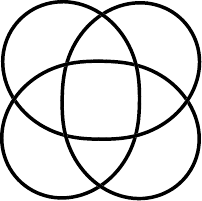}
        \subcaption{4-poppy}\label{fig:4-poppy}
    \end{minipage}
    \caption{First doodles.}\label{fig:First doodles}
\end{figure}

The algebraic counterparts of doodles are twin groups. More precisely, these groups are to doodle theory as braid groups are to knot theory.

\begin{definition}\label{def:presentation twin grp}
    The \textit{twin group} $\B n$ is the group presented by generators $\gen 1,\dots,\gen {n-1}$ and the relations:
\begin{align}
    	\gen i^2 & =  1  & & \text{for all $1 \leq i \leq n-1$}, \label{rel:invol twin grp} \\
		\gen i\gen j & = \gen j\gen i & &\text{for all $1 \leq i,j \leq n-1$ with $ |i-j|>1$}. \label{rel:comm twin grp}
\end{align}
\end{definition}

In \cite{Kh96}, Khovanov gives a geometrical interpretation of $\B n$. To be precise, the elements of $\B n$ can be regarded as planar braids, called twins. Namely, a \textit{twin} on $n$ strands is a collection of $n$ descending arcs in $\mathbb{R}\times[0,1]$ which no three arcs have a point in common. In particular, the generator $\gen i$ is represented by the following ``elementary'' twin:

\begin{equation}
    \begin{tikzpicture}
        \braid[braid colour=black,strands=8,braid start={(0,.5)}, braid height= 1.6cm, braid width = 5mm]
        {\sigma_4};
        \node[font=\scriptsize] at (1.9,-1.5) {$i$};
        \node[font=\scriptsize] at (2.6,-1.5) {$i+1$};
    \end{tikzpicture}
    \, \raisebox{-1cm}{.}
\end{equation}

Relations (\ref{rel:invol twin grp}) and (\ref{rel:comm twin grp}) are represented, respectively, as follows:
\begin{figure}[H]
    \begin{minipage}[t]{.45\textwidth}
        \centering
         \includegraphics[scale=0.8]{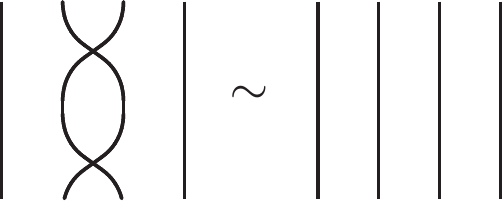}
        \subcaption{Relation (\ref{rel:invol twin grp})}
    \end{minipage}
    \hfill
    \begin{minipage}[t]{.45\textwidth}
        \centering
        \includegraphics[scale=0.8]{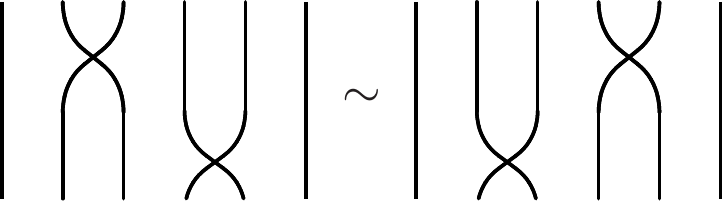}
        \subcaption{Relation (\ref{rel:comm twin grp})}
    \end{minipage}\hspace{-3mm}.
\end{figure}

For $n\geq 1$, let $\iota^R_n:\B n\hookrightarrow \B {n+1}$ be the natural inclusion given by $\iota^R_n(\gen i)=\gen i$, and let $\iota^L_n:\B n\hookrightarrow \B {n+1}$ be the inclusion given by $\iota^L_n(\gen i)=\gen {i+1}$, for $1\leq i \leq n-1$ in both cases. Thus, geometrically $\iota^R_n$ (resp. $\iota^L_n$) adds a vertical strand to the right (resp. left) of the twin, see Figure \ref{fig:inclusion}. 
We denote $\iota^R=\{\iota_n^R \}_{n\geq 1}$ and $\iota^L=\{\iota_n^L \}_{n\geq 1}$ to the systems of inclusions, and we will simply write $\iota^R(\beta)$ for $\iota_n^R(\beta)$ (resp. $\iota^L(\beta)$ for $\iota^L_n(\beta)$) if $\beta \in \B n$ for some $n\geq 1$. Let $T_{\infty}$ denote the inductive limit associated to $\iota^R$.

\begin{figure}[ht]
        \centering
        \includegraphics[scale=1.2]{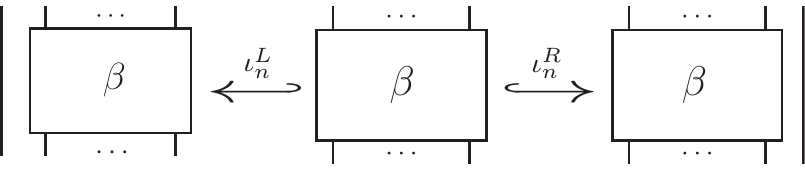}
        \caption{Inclusions $\iota^R_n$ and $\iota^L_n$.}\label{fig:inclusion}
\end{figure}

\subsection{Alexander and Markov theorems for doodles}\label{subsec:Alex-Markov}

The \textit{closure} $\hat{\beta}$ of the twin $\beta$ is the doodle obtained by connecting its upper and lower ends respectively as shown in Figure \ref{fig:borromean as closure}. Khovanov proved the analogue to the classical Alexander theorem for doodles. 

\begin{theorem}[Khovanov, {\cite[Theorem~2.1]{Kh97}}]\label{thm:alexander}
    Any doodle on the 2-sphere is the closure of a twin.
\end{theorem}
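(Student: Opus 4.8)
The plan is to adapt Alexander's classical argument that realizes a link as a closed braid, being careful that every modification we perform is a homotopy of immersed curves without triple points, hence a doodle equivalence. First I would fix a generic diagram of $D$ in the plane $\mathbb{R}^2\subset S^2$, so that all its multiple points are transverse double points, and choose a point $O$ lying in a complementary region of $D$; this $O$ plays the role of the braid axis, since the closure of a twin is, up to isotopy of $S^2$, exactly a diagram that winds monotonically around such a point. By a small ambient isotopy I would arrange that $D$ is transverse to all but finitely many of the rays issuing from $O$, the exceptions being simple tangencies at non-crossing points of $D$, and that no crossing of $D$ lies on one of these rays. Using the orientation of $D$, call an arc of $D$ \emph{positive} if it runs counterclockwise about $O$ and \emph{negative} otherwise. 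Subdividing $D$ at the radial tangencies (and, if need be, at crossing points) I may assume that each maximal negative arc $a$ is an embedded arc whose interior contains no crossing and which meets every ray from $O$ at most once. If there are no negative arcs at all, then $D$ winds monotonically and coherently around $O$; cutting $S^2$ open along a radial arc from $O$ then displays $D$ as the closure of a twin $\beta$ on $n$ strands, where $n$ is the number of intersection points of $D$ with that arc (the common rotational direction of the strands matching the descending strands of a twin, so orientations are respected). So it remains to remove negative arcs.

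The key step is the elimination of one negative arc $a$. Choose a point $\infty\in S^2\setminus D$ in a complementary region of $D$ ``opposite'' to $O$, and an embedded path from a point of $a$ to $\infty$ meeting the rest of $D$ transversely; use it to push $a$ across $\infty$ (equivalently, across the axis point $O$). The result is an arc $a'$ with the same endpoints as $a$ that now winds counterclockwise around $O$, i.e. is positive. Because $a$ was embedded, short, and free of crossings in its interior, this replacement is supported in a thin strip that can be chosen to meet the rest of $D$ only in transverse double points; hence the passage from $D$ to the new diagram is realized by a finite sequence of planar isotopies of $S^2$ together with moves R1 and R2, so $D$ is unchanged as a doodle. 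Moreover the move strictly decreases the number of maximal negative arcs: the arc $a$ is replaced by the positive arc $a'$, and the extra crossings that $a'$ acquires with strands it passes over do not alter the rotational direction of any strand, so no new negative arc is created. Iterating this step (re-subdividing before each application) we reach, after finitely many moves, a diagram with no negative arcs, which by the base case above is the closure of a twin. This proves the theorem.

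The main obstacle is the verification that the ``push across $\infty$'' move can always be carried out without creating a triple point and indeed within the equivalence generated by R1 and R2. This is precisely the reason for first subdividing each negative arc into an embedded piece that meets each radial ray once and whose interior carries no crossing: the strip swept out by the move can then be chosen disjoint from every crossing of $D$ and transverse to every strand it passes over, so that throughout the sweep all multiple points remain transverse double points and appear and disappear only in R2-pairs. A secondary point to record is termination: although the number of strands may grow under the move, the number of maximal negative arcs drops at each step, which suffices. (Alternatively, one could run Vogel's algorithm: perform the oriented smoothing at every double point of $D$ to obtain a system of pairwise disjoint embedded ``Seifert circles'', and remove incoherent pairs by Vogel moves until the circles are coherently nested, at which point $D$ is a closed twin; this makes termination transparent but still requires the same no-triple-point check for each Vogel move.)
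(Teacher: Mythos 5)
The paper itself does not prove this statement --- it imports it verbatim from Khovanov \cite{Kh97}, so there is no internal proof to measure you against; I will judge the argument on its own terms. It has a genuine gap at its central step, the elimination of a negative arc, and the gap is exactly the point you flag as ``the main obstacle'' but do not resolve.

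The two properties you require of the ``push across $\infty$'' are incompatible. If the replacement of $a$ is genuinely supported in a thin strip (a finger move along the chosen path $\gamma$ from $a$ to $\infty$, followed by sliding the tip past $\infty$), then it is indeed a composition of R2 moves and isotopy, but the resulting arc is \emph{not} positive: it consists of the two sides of the finger, which traverse essentially the same angles about $O$ in opposite senses (so one of them is negative wherever $\gamma$ is not radial), together with the surviving portions of $a$ near its fixed endpoints, which remain negative. The move then fails to reduce the number of negative arcs. If instead $a'$ is an honest positive arc with the same endpoints, then any homotopy rel endpoints from $a$ to $a'$ covers, with degree $\pm 1$, the disk $\Delta$ bounded by $a\cup a'$ on the side containing $\infty$; hence the moving arc must pass through every crossing of $D$ lying in the interior of $\Delta$, and each such passage is a triple point, i.e.\ a third Reidemeister move, which is precisely what doodle equivalence forbids. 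Such crossings cannot be avoided by subdividing $a$: a crossing of $D$ whose angle about $O$ lies in the sector subtended by $a$ and whose radius exceeds that of $a$ is separated from $O$ by $a\cup a'$ for \emph{every} positive choice of $a'$, so it lies in $\Delta$ no matter how short $a$ is made. This is exactly why the classical Alexander trick for links needs R3 moves, and why it does not transfer to doodles without a new idea.

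Your parenthetical alternative is in fact the viable route, and you undersell it. A Vogel reducing move is an R2 move performed on two incoherently oriented boundary arcs of a single complementary region of the diagram; it is therefore supported in a disk containing no crossing of $D$, so the ``no-triple-point check'' you worry about is automatic there, not problematic. Combined with Vogel's purely combinatorial termination argument, this brings any oriented doodle diagram, using only isotopy and R2, to a form whose oriented smoothings are coherently nested circles about a point, which is exactly a closed twin. I would rebuild the proof around that argument (or reproduce Khovanov's original one) rather than the throw-over-infinity move.
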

\begin{figure}[ht]
    \centering
    \includegraphics[scale=1.3]{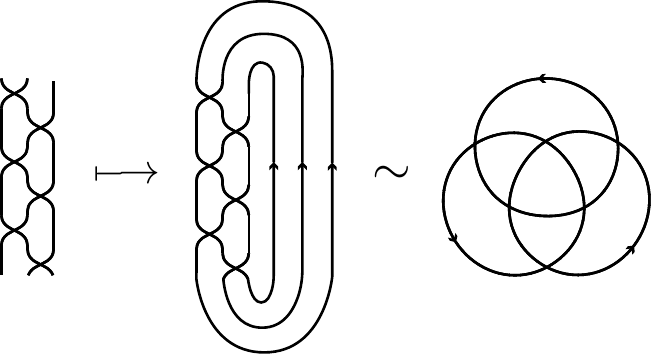}
    \caption{The Borromean doodle as the closure $\widehat{(\gen {1}\gen {2})^3}$.}
    \label{fig:borromean as closure}
\end{figure}

Theorem \ref{thm:alexander} says the map $\beta\mapsto \widehat{\beta}$ from $\B{\infty}$ onto $\mathcal{D}$ is surjective.
However, this map is highly not injective, for instance the twins $\gen{1}\in\B 2$ and $\gen{1}\gen{2}\in\B 3$ have the same closure. Recently in \cite{Got18}, Gotin proves the analogue to the Markov theorem for doodles filling the gap in this correspondence. In order to state this theorem, we set
\begin{align}
    \gen {n,i}&:=\gen {n}\gen {n-1}\cdots \gen {n-i+1}\gen {n-i}\gen {n-i+1} \cdots \gen {n-1}\gen {n}  \label{eq: notation t_n,i},\\
    \gen {1,i}&:=\gen {1}\gen {2}\cdots \gen {i}\gen {i+1}\gen {i} \cdots \gen {2}\gen {1}  \label{eq: notation t_1,i}
\end{align}
for $0\leq i \leq n-1$.

\begin{theorem}[Gotin, {\cite[Theorem~4.1]{Got18}}]\label{thm:Markov}
    Let $\gamma$ and $\gamma'$ be two twins with $D$ and $D'$ their respective closures. Then, $D$ and $D'$ are equivalent doodles if and only if $\gamma$ can be transformed into $\gamma'$ by a finite sequence of the following moves:
    \begin{enumerate}[label=\subscript{M}{{\arabic*}} , itemindent=2mm, start=0]
        \item  : $\iota^R(\beta) \sim \iota^L(\beta)\,$ for any $\beta\in \B n$, \label{rel:M0}
        \item  : $\alpha\beta \sim \beta\alpha \,$ for any $\alpha,\beta\in \B n$, \label{rel:M1}
        \item  : $ \beta \sim \iota^R(\beta) \gen {n,i} \;$ for any $\beta \in \B n$ and $0\leq i \leq n-1$, \label{rel:M2}
        \item  : $\beta \sim \iota^L(\beta) \gen {1,i} \;$ for any $\beta \in \B n$ and $0\leq i \leq n-1$. \label{rel:M3}
    \end{enumerate}
    We call these moves, the Gotin-Markov moves. 
\end{theorem}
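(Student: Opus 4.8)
The plan is to follow the classical template for Markov-type theorems, adapted to the planar, triple-point-free setting. The easy direction is to check that each of the moves \ref{rel:M0}--\ref{rel:M3} preserves the equivalence class of the closure. For \ref{rel:M1} this is the usual ``slide a block around the closure'' argument, realized here by an isotopy of $S^2$. For \ref{rel:M0}, on the $2$-sphere the rightmost closed loop of $\widehat{\iota^R(\beta)}$ can be dragged around the back of the sphere to become the leftmost loop of $\widehat{\iota^L(\beta)}$, again an isotopy. For the stabilizations \ref{rel:M2} and \ref{rel:M3} one verifies that the new strand together with the palindromic factor $\gen{n,i}$ (resp. $\gen{1,i}$) closes up to a diagram reducible to $\widehat{\beta}$ by R1 and R2 moves; here $\gen{n,i}$ is forced to be a palindrome in two copies of each of $\gen{n},\dots,\gen{n-i}$ precisely so that pushing the new strand across creates no triple point.

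For the hard direction, suppose $\gamma,\gamma'$ are twins whose closures $D,D'$ are equivalent doodles. First I would fix a \emph{braiding procedure} that turns an arbitrary doodle diagram on $S^2$ into a closed twin, in the spirit of Khovanov's proof of Theorem \ref{thm:alexander}: choose a point $\infty\in S^2$ off the diagram and a braid axis through it, and successively push the arcs running ``the wrong way'' across the axis, subdividing arcs as needed so that no triple point ever appears. The crucial preliminary lemma is that two outputs of this procedure applied to isotopic diagrams --- for different choices of axis, of the order in which bad arcs are corrected, and of $\infty$ --- differ by a finite sequence of Gotin--Markov moves; the point of \ref{rel:M0} and of the ``left'' stabilizations \ref{rel:M3} is exactly to absorb the ambiguity coming from the $S^2$-structure, i.e.\ from an arc that may be routed on either side of $\infty$.

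Next I would invoke the fact, recalled in Section~\ref{sec:preliminaries}, that $D\sim D'$ is witnessed by a finite sequence of isotopies of $S^2$ and local R1, R2 moves. It then suffices to show that applying one such elementary move to a braided diagram changes the associated twin by Gotin--Markov moves. Isotopies keeping the diagram braided reorder or conjugate the word, covered by \ref{rel:M1}; isotopies dragging an arc across the axis or across $\infty$ are handled by the well-definedness lemma, hence by \ref{rel:M0}--\ref{rel:M3}. An R2 move performed inside the braided region is relation \eqref{rel:invol twin grp} or \eqref{rel:comm twin grp}, already an equality in $\B n$; an R2 move involving a strand that must be re-braided is absorbed by a stabilization \ref{rel:M2}/\ref{rel:M3}, conjugations, and a destabilization. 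An R1 move creates or destroys a monogon, which after braiding is a single extra crossing bounding a disc, and re-expressing this on a stabilized strand via the palindromic $\gen{n,i}$ or $\gen{1,i}$ exhibits it as a composite of \ref{rel:M2}/\ref{rel:M3} with \ref{rel:M1}.

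The main obstacle is the well-definedness lemma for the braiding procedure, and within it the careful bookkeeping of arcs passing near $\infty$: one must show that the two ways of routing such an arc differ by \ref{rel:M0} and the left-handed stabilizations \ref{rel:M3}, which is exactly why the theorem requires these moves alongside the right-handed ones. Everything else reduces to a finite case analysis over the elementary moves, with the triple-point-free condition forcing each local picture to be resolved by the palindromic generators $\gen{n,i},\gen{1,i}$ rather than by a single Reidemeister-I kink.
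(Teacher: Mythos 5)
First, a point of comparison: the paper does not prove this statement at all. Theorem \ref{thm:Markov} is imported verbatim from Gotin \cite{Got18} and used as a black box, so there is no internal proof to measure yours against; your argument has to stand on its own.

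On its own terms, what you have written is a plausible proof \emph{plan} rather than a proof, and the gap sits exactly where you admit it does. The easy direction (each of \ref{rel:M0}--\ref{rel:M3} preserves the closure up to R1, R2 and isotopy of $S^2$) is essentially complete: the palindromic finger $\gen{n,i}$ closes to a nest of bigons around an innermost monogon, all removable without triple points. But in the hard direction everything is delegated to an unproved ``well-definedness lemma'': that the output of the braiding procedure is independent, up to Gotin--Markov moves, of the choice of axis, of $\infty$, and of the order in which bad arcs are corrected, and that each elementary R1/R2 move applied to a braided diagram is absorbed by the moves. That lemma \emph{is} the theorem; asserting it and assigning each local picture to a move is not a proof of it. In particular the one point that makes Gotin's statement genuinely different from the classical Markov theorem --- that a kink or bigon sitting on an \emph{interior} strand of the closed twin cannot be slid to the boundary of the braided region without creating a triple point, and must instead be traded for a generalized stabilization $\gen{n,i}$ or $\gen{1,i}$ with $i>0$ --- is only gestured at, not carried out as a case analysis. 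A smaller imprecision: an R2 move performed inside the braided region corresponds to relation \eqref{rel:invol twin grp}; relation \eqref{rel:comm twin grp} encodes a planar isotopy reordering the heights of two distant crossings, not an R2 move.
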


\begin{figure}[htb]
    \centering
    \includegraphics[scale=1.3]{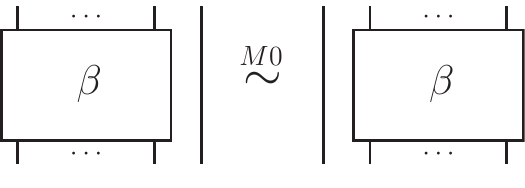}
    \caption{\ref{rel:M0} move.}
    \label{fig:M0 move}
\end{figure}

\begin{figure}[htb]
    \centering
    \includegraphics[scale=1.3]{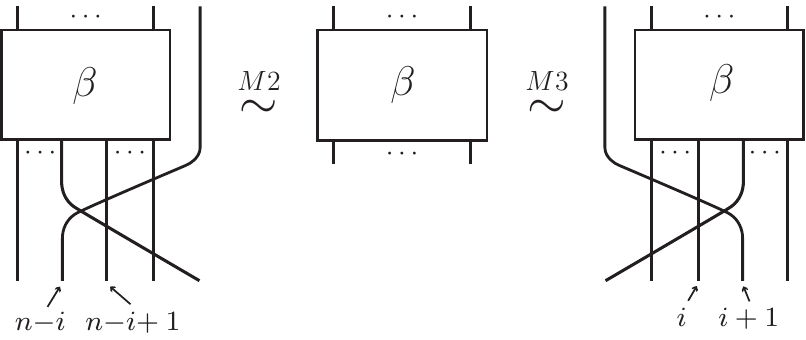}
    \caption{\ref{rel:M2} and \ref{rel:M3} moves.}
    \label{fig:M2 M3}
\end{figure}

\begin{remark}
    When $i=0$ in $M2$ we have $\gen{n,0}=\gen n$, then the $M2$ move becomes the classical second Markov move.
\end{remark}

The Gotin-Markov moves define an equivalence relation on $\B \infty$, denoted by $\sim_G$. Thus, Theorem \ref{thm:alexander} together with Theorem  \ref{thm:Markov} implies the bijection
\begin{equation*}
    \mathcal{D} / \sim \quad \longleftrightarrow  \quad T_{\infty} /\sim_{G}.
\end{equation*}

This bijection says that constructing an invariant for doodles it reduces to find a  family $f=\{f_n: \B n \to \mathcal{S}\}_{n\geq 1}$, where $\mathcal{S}$ is a well understood set and the functions $f_n$ are invariant under the Gotin-Markov moves. In Section \ref{sec:Invariant}, we will construct a such family by taking $\mathcal{S}=\Zx$ and certain $f_n$ defined in terms of a deformation of the Tits representation of the twin group.

\subsection{Tits representation of Coxeter groups}\label{subsec:Coxeter}

We describe briefly some properties of Coxeter groups and the particular case for twin groups. Let $S$ be a finite set; a \textit{Coxeter matrix} on $S$ is a symmetric matrix $M= (m_{s,t})_{s,t \in S}$ such that $m_{s,t}\in \{2,3,\dots \}\cup \{\infty\}$ and $m_{s,s}=1$. The Coxeter group associated to $M$ is the group defined by the following presentation: 
    $$ W(M):= \langle \; S \; | \; (st)^{m_{s,t}} \; \text{for all $s,t$ with $m_{s,t}\neq \infty$ } \rangle . $$

Tits constructed a faithful linear representation for any Coxeter group in the following way. Let $S=\{s_1, s_2, \dots, s_n\}$ be a finite set, $M$ be a Coxeter matrix on $S$ and $V$ be a real vector space with basis $\{e_1,\dots,e_{n}\}$. Now define the symmetric bilinear form $B$ on $V$ by

    $$B(e_i,e_j)= 
        \begin{cases}
        -\cos(\frac{\pi}{m_{i,j}}) & \text{if $m_{s_i,s_j}\neq \infty$}, \\
        -1 & \text{if $m_{s_i,s_j} = \infty$}.
        \end{cases}
    $$
The linear map $\sigma_i : V \to V$ given by $\sigma_i(v) = v- 2B(e_i, v)e_i$, defines an automorphism of $V$.

\begin{theorem}[Tits, see {\cite[p. 96]{Bour4-6}}]
The map $\rho:W(M)\to GL(V)$ defined through $\rho(s_i) =\sigma_i$ is a faithful representation of $W(M)$.
\end{theorem}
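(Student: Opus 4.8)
The plan is to verify the two components of the claim separately: that $\rho$ is a well-defined group homomorphism, and that it is injective.

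First I would check well-definedness, i.e.\ that the $\sigma_i$ satisfy the defining Coxeter relations. That each $\sigma_i$ is an involution is an immediate computation using $B(e_i,e_i)=1$. For $i\neq j$ with $m:=m_{s_i,s_j}$ finite, I would restrict $B$ to the plane $P=\mathbb{R}e_i\oplus\mathbb{R}e_j$; its Gram matrix has determinant $\sin^2(\pi/m)>0$, so $B|_P$ is positive definite, $V=P\oplus P^{\perp}$, and on $P^{\perp}\subseteq e_i^{\perp}\cap e_j^{\perp}$ both $\sigma_i$ and $\sigma_j$ act trivially. On the Euclidean plane $P$ they are the reflections in the two lines $e_i^{\perp}$, $e_j^{\perp}$, which meet at angle $\pi/m$, so $\sigma_i\sigma_j|_P$ is rotation by $2\pi/m$, whence $(\sigma_i\sigma_j)^{m}=\mathrm{id}_V$ and $m$ is the exact order; this yields the relations (when $m_{s_i,s_j}=\infty$ there is no relation to check). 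Along the way I would record that $B$ is invariant, $B(\sigma_i u,\sigma_i v)=B(u,v)$, so that every element of the root set $\Phi:=\{\rho(w)e_i: w\in W(M),\ 1\le i\le n\}$ has $B(\alpha,\alpha)=1$ and in particular $\alpha\neq 0$.

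For injectivity I would use the standard root-system argument. Call a root \emph{positive} if all of its coordinates in the basis $\{e_1,\dots,e_n\}$ are $\ge 0$ and \emph{negative} if they are all $\le 0$; since no nonzero vector is both, $\Phi^+\cap\Phi^-=\varnothing$. The heart of the matter is the lemma that, writing $\ell$ for the word length on $W(M)$ with respect to $\{s_1,\dots,s_n\}$, one has $\rho(w)e_i>0$ when $\ell(ws_i)>\ell(w)$ and $\rho(w)e_i<0$ when $\ell(ws_i)<\ell(w)$; this in particular gives the dichotomy $\Phi=\Phi^+\sqcup\Phi^-$. I would prove this lemma by induction on $\ell(w)$: the base case $w=1$ is trivial, and for the inductive step (case $\ell(ws_i)>\ell(w)$) I would take a reduced expression for $w$ ending in some generator $s_j\neq s_i$, pass to the unique minimal-length representative $w_0$ of the coset $w\langle s_i,s_j\rangle$, write $w=w_0u$ with $u\in\langle s_i,s_j\rangle$ and $\ell(w)=\ell(w_0)+\ell(u)$, use the explicit computation inside the dihedral group $\langle s_i,s_j\rangle$ acting on $P$ to see that $\rho(u)e_i$ is a nonnegative combination of $e_i$ and $e_j$, and then apply the inductive hypothesis to the strictly shorter $w_0$; the opposite case follows by replacing $w$ with $ws_i$.

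With the lemma in hand, faithfulness is immediate: if $\ker\rho$ contained some $w\neq 1$, take a reduced word $w=s_{i_1}\cdots s_{i_k}$ with $k\ge1$; then $\ell(ws_{i_k})<\ell(w)$, so the lemma forces $\rho(w)e_{i_k}<0$, whereas $\rho(w)=\mathrm{id}_V$ gives $\rho(w)e_{i_k}=e_{i_k}>0$, contradicting $\Phi^+\cap\Phi^-=\varnothing$. The main obstacle is the inductive lemma of the previous paragraph: organizing the induction so as to avoid circularity and controlling the passage to the rank-two dihedral subgroup $\langle s_i,s_j\rangle$ — which rests on the explicit action of $I_2(m_{s_i,s_j})$ on $P$ and on the structure of minimal coset representatives (equivalently, the exchange condition). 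The well-definedness step and the final deduction are routine by comparison.
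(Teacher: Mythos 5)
The paper does not prove this theorem at all: it is quoted as a classical result of Tits with a citation to Bourbaki, so there is no internal proof to compare against. Your outline is the standard root-system proof found in Bourbaki and in Humphreys' \emph{Reflection Groups and Coxeter Groups}: verify the Coxeter relations by restricting to the rank-two subspace $P=\mathbb{R}e_i\oplus\mathbb{R}e_j$ (where the Gram determinant $\sin^2(\pi/m)$ makes $B|_P$ positive definite and $\sigma_i\sigma_j$ a rotation of order exactly $m$), then establish faithfulness via the dichotomy $\Phi=\Phi^+\sqcup\Phi^-$ and the key lemma relating the sign of $\rho(w)e_i$ to whether $s_i$ lengthens or shortens $w$. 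This is correct in its essentials, and the final deduction of injectivity from the lemma is exactly right. The one place where the sketch is genuinely thin --- and you rightly flag it as the main obstacle --- is the appeal to ``the structure of minimal coset representatives (equivalently, the exchange condition)'': in the standard development the exchange condition is \emph{deduced from} Tits' theorem, so invoking it here would be circular. The length additivity $\ell(w_0u)=\ell(w_0)+\ell(u)$ for the minimal representative $w_0$ of $w\langle s_i,s_j\rangle$ must instead be folded into the induction on $\ell(w)$ itself (as Bourbaki and Humphreys do), using only the elementary facts $|\ell(ws_k)-\ell(w)|=1$ and the explicit dihedral computation on $P$. With that bookkeeping made precise, the argument is complete.
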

The representation $\rho$ above is called the Tits (or geometrical) representation of $W(M)$. Let us denote by $I_m$ the identity matrix of size $m$.
In particular for $T_n$, the Tits representation $\rho$ is given by $t_i \mapsto \sigma_i$ where
$$
\sigma_1=\left(\begin{array}{cc|c}
        -1 & 0 & \raisebox{-10pt}{0}\\
        2 & 1 & \\ \hline
        \multicolumn{2}{c|}{0} & I_{n-3}\\
    \end{array}
    \right), \qquad
\sigma_{n-1}=\left(\begin{array}{c|cc}
        I_{n-3} & \multicolumn{2}{c}{0} \\ \hline
        \raisebox{-10pt}{0} & 1 & 2\\
         & 0 & -1\\
    \end{array}
    \right)
$$
and for $1 < i < n-1$, 
$$\sigma_i= \left(\begin{array}{c|ccc|c}
        I_{i-2} & \multicolumn{3}{c|}{0} & 0\\ \hline
         & 1 & 2 & 0 & \\
        0 & 0 & -1 & 0 & 0\\
         & 0 & 2 & 1 & \\ \hline
        0 & \multicolumn{3}{c|}{0} & I_{n-i-2}\\
    \end{array}
    \right).$$

\subsection{Chevyshev Polynomials}\label{subsec:Chebyshev}

There are two types of Chebyshev polynomials: the first and the second kind. For the purpose of this paper, we only need to recall the definition of the Chebyshev polynomials of second kind. For a more detailed treatment on Chebyshev polynomials see \cite{MasHan03}.

\begin{definition}
    The \textit{Chebyshev polynomial of second kind} $U_n(z)$ is the polynomial of degree $n$ in one variable $z$ defined by the recurrence relation
    \begin{equation}\label{eq:recurrence Chebyshev}
        U_n(z) = 2z U_{n-1}(z) - U_{n-2}(z),
    \end{equation}
    and the initial conditions
    \begin{equation}\label{eq:initial Chebyshev}
    U_0(z)=1 \quad \textrm{and} \quad U_1(z)=2z.
\end{equation}
\end{definition}

In the rest of the paper we simply refer to the polynomials $U_i$'s as the Chebyshev polynomials. In Section \ref{sec:Invariant}, we prove that certain polynomials associated to a family of twins can be written in terms of the Chebyshev polynomials (Corollary \ref{cor:chebyshev}).

\section{A Deformed Tits Representation for the Twin Group}\label{sec:Representation}

The main purpose of this section is to define a deformation of the Tits representation and to give some of its properties. 

For $n\geq 3$, we consider the following $(n-1)\times(n-1)$ matrices defined over the ring $\Lambda:=\Zxy$
$$
    V_1=\left(\begin{array}{cc|c}
            -1 & 0 & \raisebox{-10pt}{0}\\
            x & 1 & \\ \hline
             \multicolumn{2}{c|}{0} & I_{n-3}\\
        \end{array}
        \right), \qquad
    V_{n-1}=\left(\begin{array}{c|cc}
            I_{n-3} & \multicolumn{2}{c}{0} \\ \hline
            \raisebox{-10pt}{0} & 1 & y\\
             & 0 & -1\\
        \end{array}
        \right)
$$
and for $1 < i < n-1$, 
$$V_i= \left(\begin{array}{c|ccc|c}
        I_{i-2} & \multicolumn{3}{c|}{0} & 0\\ \hline
         & 1 & y & 0 & \\
        0 & 0 & -1 & 0 & 0\\
         & 0 & x & 1 & \\ \hline
        0 & \multicolumn{3}{c|}{0} & I_{n-i-2}\\
    \end{array}
    \right).$$
For $n=2$, we set $V_1 := -1$.

\begin{lemma}\label{lem:braid rel estimation}
    For any $n\geq 3$ and $1\leq i \leq n-2$,
    \begin{equation*}
        V_{i}V_{i+1}V_{i}-V_{i+1}V_iV_{i+1} = (xy-1)(V_i-V_{i+1}).
    \end{equation*}
\end{lemma}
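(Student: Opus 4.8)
The plan is to exploit the fact that each $V_i$ is a rank-one perturbation of the identity, which reduces the claim from an $(n-1)$-dimensional matrix identity to a short manipulation of rank-one operators. Inspecting the three block shapes defining $V_1$, $V_i$ (for $1<i<n-1$) and $V_{n-1}$, one sees that $I_{n-1}-V_i$ has a single nonzero column, namely the $i$-th, so that
\[
    V_i = I_{n-1} - w_i\,e_i^{\top},\qquad w_i := 2e_i - y\,e_{i-1} - x\,e_{i+1},
\]
where $e_1,\dots,e_{n-1}$ is the standard basis, $e_j^{\top}$ denotes the $j$-th coordinate functional, and we adopt the convention $e_0=e_n=0$ so that the formula also covers $V_1$ and $V_{n-1}$. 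First I would verify this in each of the three shapes (a one-line computation of $I_{n-1}-V_i$ per case) and record the three pairings the argument will use:
\[
    e_i^{\top}w_i = e_{i+1}^{\top}w_{i+1} = 2,\qquad e_i^{\top}w_{i+1} = -y,\qquad e_{i+1}^{\top}w_i = -x .
\]

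Next I would run the computation abstractly. Set $u=w_i$, $v=w_{i+1}$, $f=e_i^{\top}$, $g=e_{i+1}^{\top}$, so $V_i=I_{n-1}-uf$ and $V_{i+1}=I_{n-1}-vg$ with $f(u)=g(v)=2$, $p:=f(v)=-y$ and $q:=g(u)=-x$. Using only that composition of rank-one operators obeys $(uf)(vg)=f(v)\,ug$ together with these values, an expansion gives
\[
    V_iV_{i+1}V_i = I_{n-1} - pq\,uf - vg + p\,ug + q\,vf,
\]
and, by the symmetry exchanging $(u,f)\leftrightarrow(v,g)$ (which swaps $p$ and $q$ but fixes $pq$ and the value $2$),
\[
    V_{i+1}V_iV_{i+1} = I_{n-1} - pq\,vg - uf + p\,ug + q\,vf .
\]
Subtracting, the cross-terms $p\,ug$ and $q\,vf$ cancel and the remainder collapses to $(1-pq)(uf-vg)=(1-pq)(V_{i+1}-V_i)$; since $pq=(-y)(-x)=xy$ this equals $(xy-1)(V_i-V_{i+1})$, which is the assertion.

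I do not expect a real obstacle. The one thing to be careful about is the bookkeeping for the boundary indices $i=1$ and $i=n-2$, where one of $V_i,V_{i+1}$ is a $2\times2$ rather than a $3\times3$ block, and for $n=3$, where both are $2\times2$: in all of these the rank-one form of $V_i$ and the three pairings above are literally unchanged (that is the point of the $e_0=e_n=0$ convention), so the uniform computation applies verbatim. As an alternative to the operator argument, one could note that both sides of the identity are supported on the at most four consecutive coordinates on which $V_i$ or $V_{i+1}$ acts non-trivially and simply check the identity on that small block; I would keep the rank-one version as the main proof and record the block reduction only as a remark.
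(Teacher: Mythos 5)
Your argument is correct, and I checked the key identities: with $V_j=I_{n-1}-w_je_j^{\top}$, $w_j=2e_j-ye_{j-1}-xe_{j+1}$ (and $e_0=e_n=0$), the pairings $f(u)=g(v)=2$, $p=f(v)=-y$, $q=g(u)=-x$ are as you state, and the expansion using $(uf)(vg)=f(v)\,ug$ and $(uf)^2=2\,uf$ does give $V_iV_{i+1}V_i=I-pq\,uf-vg+p\,ug+q\,vf$, whence the difference collapses to $(1-pq)(uf-vg)=(xy-1)(V_i-V_{i+1})$. However, your route is genuinely different from the paper's: the paper simply asserts that it suffices to verify the single case $i=2$, $n=4$ by direct computation, i.e.\ it relies implicitly on the locality of the identity (both sides are supported on a window of at most four consecutive coordinates) and checks one representative $3\times 3$ instance. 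Your rank-one-perturbation argument buys a uniform treatment of all indices, including the boundary cases $i=1$ and $i=n-2$ and the case $n=3$, which the paper's one-case reduction glosses over (indeed the instance the paper checks pairs a middle-type $V_2$ with an end-type $V_3$, so strictly one would still want the other block shapes); it also makes visible where the coefficient $xy-1$ comes from, namely $1-pq$ with $p=-y$, $q=-x$. The paper's approach is shorter to state but leaves the reduction to the reader; yours is self-contained and scales without further checking. Your closing remark that the block reduction could be recorded as an alternative is essentially the paper's proof.
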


\begin{proof}
    It suffices to verify the case $i=2$ and $n=4$, which follows from a direct computation. 
\end{proof}

Matrices $V_i$'s are an analogue to the matrices used in the reduced Burau representation of the braid group \cite{Bur35}. In fact, we have the following proposition.

\begin{proposition}\label{prop:psi is rep}
    For $n\geq 2$, the function $\psi_n:\B n \longrightarrow \gl{n-1}$ defined by $\psi_n(\gen{i})=V_i$ is a representation of the twin group.
\end{proposition}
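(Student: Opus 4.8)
The plan is to check that the matrices $V_i$ satisfy the defining relations of the twin group $\B n$, namely $\gen i^2 = 1$ and $\gen i\gen j = \gen j\gen i$ for $|i-j|>1$; by the universal property of the presentation in Definition~\ref{def:presentation twin grp}, this suffices to conclude that $\psi_n$ is a well-defined group homomorphism into $\gl{n-1}$. (One should also note that each $V_i$ is actually invertible over $\Lambda$, which is immediate since $V_i^2 = I_{n-1}$ will imply $V_i^{-1}=V_i$.)

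First I would verify $V_i^2 = I_{n-1}$ for every $i$. This is a purely local computation: each $V_i$ differs from the identity only in a $2\times 2$ block (for $i=1$ and $i=n-1$) or a $3\times 3$ block (for $1<i<n-1$) sitting on the diagonal. For the $2\times 2$ block $\left(\begin{smallmatrix} -1 & 0 \\ x & 1\end{smallmatrix}\right)$ one has $\left(\begin{smallmatrix} -1 & 0 \\ x & 1\end{smallmatrix}\right)^2 = \left(\begin{smallmatrix} 1 & 0 \\ 0 & 1\end{smallmatrix}\right)$, and similarly for the block of $V_{n-1}$; for the interior $3\times 3$ block one checks directly that
$$\left(\begin{array}{ccc} 1 & y & 0 \\ 0 & -1 & 0 \\ 0 & x & 1\end{array}\right)^2 = I_3.$$
Since all off-block entries are those of an identity matrix, squaring acts blockwise and the relation \eqref{rel:invol twin grp} holds.

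Next I would verify commutativity $V_iV_j = V_jV_i$ when $|i-j|>1$. Here the key observation is that the non-identity block of $V_i$ involves only rows/columns indexed in $\{i-1,i,i+1\}$ (intersected with $\{1,\dots,n-1\}$), and likewise for $V_j$ in $\{j-1,j,j+1\}$. When $|i-j|>1$ these two index sets are disjoint, so $V_i$ and $V_j$ are supported (away from the identity) on disjoint sets of coordinates and therefore commute; concretely, writing $V_i = I + A_i$ and $V_j = I + A_j$ with $A_i, A_j$ having disjoint row-supports and disjoint column-supports, one gets $A_iA_j = A_jA_i = 0$, hence $V_iV_j = I + A_i + A_j = V_jV_i$. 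This establishes relation \eqref{rel:comm twin grp}.

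None of these steps is a genuine obstacle — the content is entirely in the local block computations, which are routine — so the only thing to be careful about is bookkeeping at the boundary cases $i=1$, $i=n-1$, and the degenerate case $n=2$ where $V_1 = -1$ and there are no relations of type \eqref{rel:comm twin grp} to check and $(-1)^2=1$ handles \eqref{rel:invol twin grp}. I would also remark that Lemma~\ref{lem:braid rel estimation} is \emph{not} needed here (it records the failure of the braid relation, which is not a relation in $\B n$ anyway), but it is consistent with the picture: the $V_i$ realize exactly the Coxeter-type relations of $\B n$ and nothing more.
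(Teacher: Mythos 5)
Your overall strategy --- verify the two defining relations \eqref{rel:invol twin grp} and \eqref{rel:comm twin grp} on the matrices $V_i$ and invoke the universal property of the presentation --- is exactly the paper's (which simply records the $n=3$ and $n=4$ computations), and your involution check and treatment of the boundary/degenerate cases are fine. However, your commutativity argument contains an error precisely in the only nontrivial case. When $|i-j|=2$ the index sets $\{i-1,i,i+1\}$ and $\{j-1,j,j+1\}$ are \emph{not} disjoint: they share $i+1=j-1$, so the non-identity blocks of $V_i$ and $V_j$ overlap in one row/column. (This is visible in the paper's own computation of $\psi_4(\gen 1)\psi_4(\gen 3)$, where the product has both $x$ and $y$ sitting in row $2$.) Consequently the row supports of $A_i=V_i-I$ and $A_j=V_j-I$ also overlap, so your stated hypothesis ``disjoint row-supports and disjoint column-supports'' fails; and even where it holds, that hypothesis does not in general force $A_iA_j=0$ (e.g.\ $E_{12}E_{21}=E_{11}$ for elementary matrices with disjoint row supports and disjoint column supports).

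The conclusion is nevertheless correct and the repair is one line: the deviation $A_i=V_i-I$ is nonzero \emph{only in column $i$}, with row support contained in $\{i-1,i,i+1\}$. For $|i-j|>1$ one has $i\notin\{j-1,j,j+1\}$, so the column support of $A_i$ misses the row support of $A_j$, giving $A_iA_j=0$, and symmetrically $A_jA_i=0$; hence $V_iV_j=I+A_i+A_j=V_jV_i$. With this correction your proof is complete and covers all $n$ uniformly, which is marginally more explicit than the paper's reduction to a direct $4\times 4$ check. Your side remarks (invertibility from $V_i^2=I$, and the fact that Lemma~\ref{lem:braid rel estimation} is not needed here) are both correct.
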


\begin{proof}
  The proof follows by checking the defining relations of the twin group are satisfied by the matrices $V_i$'s. For $n=3$ the checking results from a direct computation. In the general case, it is enough to check for $n=4$, which follows again by direct computation:
  \begin{equation*}
      (\psi_4(\gen{2}))^2=\left(\begin{array}{ccc}
         1 & y & 0 \\
         0 & -1 & 0\\
         0 & x & 1 
    \end{array}
    \right) \left(\begin{array}{ccc}
         1 & y & 0 \\
         0 & -1 & 0\\
         0 & x & 1 
    \end{array} 
    \right) = \left(\begin{array}{ccc}
         1 & 0 & 0 \\
         0 & 1 & 0\\
         0 & 0 & 1 
    \end{array}
    \right)
  \end{equation*}
  and
  \begin{eqnarray*}
    \psi_4(\gen{1})\psi_4(\gen{3}) &=& \left(\begin{array}{ccc}
         -1 & 0 & 0 \\
         x & 1 & 0 \\
         0 & 0 & 1 \\
    \end{array}
    \right) \left(\begin{array}{ccc}
         1 & 0 & 0\\
         0 & 1 & y\\
         0 & 0 & -1
    \end{array} 
    \right) = \left(\begin{array}{ccc}
        -1 & 0 & 0\\
         x & 1 & y\\
         0 & 0 & -1
    \end{array}
    \right) \\  &=& \left(\begin{array}{ccc}
         1 & 0 & 0\\
         0 & 1 & y\\
         0 & 0 & -1
    \end{array}
    \right) 
    \left(\begin{array}{ccc}
         -1 & 0 & 0 \\
         x & 1 & 0 \\
         0 & 0 & 1 \\
    \end{array}
    \right) = \psi_4(\gen{3})\psi_4(\gen{1}).
  \end{eqnarray*}
\end{proof}

\begin{remark}
    Specializing $x=y=2$, the matrices $V_i$'s become the matrices $\sigma_i$'s seen in Subsection \ref{subsec:Coxeter}. Thus, $\psi_n$ can be regarded as a two parameters deformation of the Tits representation of the twin group $\B n$.
\end{remark}

Observe that,
\begin{equation}\label{eq:Vi *i}
    \psi_{n+1}(\iota^R(\gen {i}))= \left(
    \begin{array}{cc}
    \psi_n(\gen {i}) & 0\\
    v^R(\gen i) & 1\\ 
    \end{array}
    \right) = \left(
    \begin{array}{cc}
    V_i & 0\\
    v^R(\gen i) & 1\\
    \end{array} \right) \qquad 1\leq i \leq n-1,
\end{equation}
where $v^R(\gen i)$ is the row of length $n-1$ equal to $0$ if $i<n-1$ and $(0,\dots,0,x)$ if $i=n-1$. This implies that for $n\geq 2$ and $\beta \in \B n$, 
\begin{equation}\label{eq:matrix *beta}
    \psi_{n+1}(\iota^R(\beta))= \left(
    \begin{array}{cc}
    \psi_n(\beta) & 0\\
    v^R(\beta) & 1\\
    \end{array}
    \right),
\end{equation}
where $v^R(\beta)$ is a row of length $n-1$ over $\Lambda$ depending on $\beta$. In Corollary \ref{cor:row *b} we will see how to compute $v^R(\beta)$.

\begin{remark}\label{rem:b[1]}
    The analogous of Equation (\ref{eq:matrix *beta}) for the inclusion $\iota^L$ is
    \begin{equation}\label{eq:matrix *beta[1]}
        \psi_{n+1}(\iota^L(\beta))= \left(
        \begin{array}{cc}
        1 & v^L(\beta)\\
        0 & \psi_n(\beta)\\
        \end{array}
        \right),
    \end{equation}
    where $v^L(\beta)$ is a row of length $n-1$ over $\Lambda$ depending on $\beta$.
\end{remark}

In order to compute the values of $\psi_n$ on the elements defined in \ref{eq: notation t_n,i} and \ref{eq: notation t_1,i}, we need to introduce the following notation: given a matrix $M = (m_{r,s})$ of size $k \times l$, we denote $M^{\tau} = (m'_{r,s})$ the matrix whose entries are $m'_{r,s} := m_{k-r+1,l-s+1}$. For $n\geq 2$, let $A_n = (a_{i,j})$, $\tilde{Z}_n = (\tilde{z}_{i,j})$ be the $n\times n$ matrices given by
\[
\scalemath{0.95}{
a_{i,j} := \begin{cases}
                -y^{j-1}  & i=1,\, 1\leq j\leq n,\\
                0         & i<j-1,\\
                x       &  i=j+1,\, 1\leq j \leq n-1,\\
                y^{j-i}(xy-1) & 1< j\leq i,    
            \end{cases} \quad 
\tilde{z}_{i,j} := \begin{cases}
                xy^j -2y^{j-1}         & i=1,\\
                x^iy^j -x^{i-1}y^{j-1}  & 1< i< n,\\
                -x^{n-1}y^{j-1}       & i= n.
            \end{cases}
}        
\]
Finally, we set $Z_n:=\tilde{Z}_n+I_n$. For instance, for $n=3$ we have
\[
\scalemath{0.95}{
    A_3=\left(\begin{array}{rrrr}
        -1 & -y & -y^{2}  \\
        x & x y - 1 & x y^{2} - y \\
        0 & x & x y - 1 \\
    \end{array}\right), \qquad  
    Z_3=\left(\begin{array}{rrr}
        x y - 1 & x y^{2} - 2 y & x y^{3} - 2 y^{2} \\
        x^{2} y - x & x^{2} y^{2} - x y + 1 & x^{2} y^{3} - x y^{2} \\
        -x^{2} & -x^{2} y & -x^{2} y^{2} + 1
    \end{array}\right).
}
\]

Let $M_{k,l}(\Lambda)$ denote the ring of $k\times l$ matrices over $\Lambda$. In the lemma below we use the natural map $\theta:M_{k,l}(\Lambda)\to M_{k,l}(\Lambda)$ induced from the $\mathbb{Z}$-automorphism of $\Lambda$ that interchanges  $x$ with $y$.

\begin{lemma}\label{lem:matrices}
    For any $n\geq 2$ and $1\leq i\leq n-1$,
    \begin{enumerate}[label=(\alph{enumi})]
        \item $\psi_{n+1}(\gen 1\cdots\gen {n})= A_n$, \label{eq: matriz A}
        \item $\psi_{n+1}(\gen {n,i})= B_{n,i}$, \label{eq: matriz Bi}
        \item $\psi_{n+1}(\gen {1,i})= C_{n,i}$, \label{eq: matriz Cj}
    \end{enumerate} 
    where $B_{n,i}$ and $C_{n,i}$ are defined as follows:
    \begin{equation*}
    \scalemath{0.88}{
    B_{n,i}=\left\{
        \begin{array}{cl}
            \left( \begin{array}{c|c}
            I_{n-i-1}   & Y_{i+1}\\ \hline
                0       & Z_{i+1}
           \end{array} \right)  & \textrm{$1\leq i\leq n-2$},\\
                                &\\
            Z_{n}             & \textrm{$i=n-1$},
     \end{array}
     \right. \quad C_{n,i}=\left\{
        \begin{array}{cl}
            \left( \begin{array}{c|c}
                \theta (Z_{i+1})^\tau  & 0 \\ \hline
                \theta(Y_{i+1})^\tau  & I_{n-i-1}
           \end{array} \right)  & \textrm{$1\leq i\leq n-2$},\\
                                &\\
            \theta(Z_{n})^\tau        & \textrm{$i=n-1$},
     \end{array}
     \right.}
    \end{equation*}
   and $Y_i$ is the $(n-i-1) \times i$ matrix given by
    \begin{equation*}
    \scalemath{0.88}{
     Y_{i}=\left(
          \begin{array}{c}
            \vdots \\ \hline
            \begin{array}{cccc}
                y & y^2 & \cdots  & y^{i}
            \end{array}
          \end{array}
         \right),
        }
    \end{equation*}
    where vertical dots mean a block of 0's for $1\leq i<n-2$.
\end{lemma}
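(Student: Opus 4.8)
The plan is to prove all three statements simultaneously by induction on $n$, exploiting the recursive structure encoded in Equations (\ref{eq:matrix *beta}) and (\ref{eq:matrix *beta[1]}) together with the block-triangular shape of the matrices $V_i$. First I would establish item \ref{eq: matriz A}: since $\gen{1}\cdots\gen{n} = \iota^R(\gen{1}\cdots\gen{n-1})\cdot\gen{n}$ inside $\B{n+1}$, I would use (\ref{eq:matrix *beta}) to write $\psi_{n+1}(\gen{1}\cdots\gen{n-1})$ as a block matrix with $A_{n-1}$ in the top-left corner, then multiply on the right by $V_n$ (which differs from the identity only in its lower-right $2\times 2$ block). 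Tracking how the last column of $A_{n-1}$ interacts with the $x$ below the diagonal of $V_n$ should reproduce exactly the closed form of $A_n$; the entries $a_{i,j}$ have been designed so that this one matrix multiplication closes the induction, and the base case $n=2$ (where $A_2 = V_1 V_2$) is a direct computation.

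Next I would handle items \ref{eq: matriz Bi} and \ref{eq: matriz Cj}. For \ref{eq: matriz Bi}, I would first treat the "anchor" case $i = n-1$, where $\gen{n,n-1} = \gen{n}\gen{n-1}\cdots\gen{1}\cdots\gen{n-1}\gen{n}$, and show $\psi_{n+1}(\gen{n,n-1}) = Z_n$ by a separate induction: the key algebraic identity is the recursion $\gen{n,i} = \gen{n}\,\gen{n-1,i-1}\,\gen{n}$ when one conjugates by the outermost generator, or alternatively $\gen{n,i} = \iota^R(\gen{n-1,i})$-type relations combined with the block formula (\ref{eq:matrix *beta}). Concretely, since for $1\le i\le n-2$ the word $\gen{n,i}$ only involves generators $\gen{n-i},\dots,\gen{n}$, it lies in the image of an inclusion of $\B{i+2}$, so its matrix is the identity padded by a copy of $\psi_{i+2}(\gen{i+1,i}) = Z_{i+1}$ sitting in the bottom-right, plus whatever column $v^R$ contributes — and that contribution is precisely the block $Y_{i+1}$. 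This is where the Chebyshev-free bookkeeping is heaviest: I expect the verification that the off-diagonal block equals $Y_{i+1}$ (with its column $(y,y^2,\dots,y^{i+1})$) to require carefully composing the iterated inclusion formulas, since each application of $\iota^R$ adds one row $v^R(\beta)$ that must be computed via Corollary \ref{cor:row *b}.

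For item \ref{eq: matriz Cj}, I would avoid redoing the work: the map $\beta \mapsto \psi(\beta)^\tau$ composed with the $x\leftrightarrow y$ swap $\theta$ intertwines the right inclusion $\iota^R$ with the left inclusion $\iota^L$ and sends $V_i \mapsto V_{n-i}$, so applying this symmetry to part \ref{eq: matriz Bi} yields part \ref{eq: matriz Cj} formally, matching $\gen{1,i}$ to the reversal of $\gen{n,i}$ under relabeling of strands. I would state this symmetry as a short lemma (or inline observation) and note that it exchanges $Y_{i+1}$ with $\theta(Y_{i+1})^\tau$ and $Z_{i+1}$ with $\theta(Z_{i+1})^\tau$, which is exactly the form appearing in $C_{n,i}$.

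The main obstacle I anticipate is purely organizational rather than conceptual: keeping the block decompositions consistent across the nested inductions (one on $n$ for \ref{eq: matriz A}, one on $i$ for the $Z_n$ anchor, and the inclusion-padding argument for general $i$), and in particular verifying that the entries of $\tilde Z_n$ — especially the "boundary" rows $i=1$ and $i=n$ with their $xy^j - 2y^{j-1}$ and $-x^{n-1}y^{j-1}$ forms — emerge correctly from the matrix products rather than some cosmetically different but equal expression. I would therefore carry out the $n=3$ case explicitly (the displayed $A_3$ and $Z_3$ serve as a sanity check) before running the general induction, and flag the single $4\times 4$ (resp. appropriately sized) matrix multiplication that constitutes the real content of each inductive step, leaving the remaining bookkeeping to the reader as "a direct computation" in the style already used for Lemma \ref{lem:braid rel estimation} and Proposition \ref{prop:psi is rep}.
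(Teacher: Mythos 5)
Your architecture is genuinely different from the paper's, and parts of it are attractive. The paper proves (b) by induction on $i$ with $n$ fixed, using Lemma \ref{lem:braid rel estimation} to write $\psi_{n+1}(\gen {n,i+1}) = V_{n-i-1}\,\psi_{n+1}(\gen {n,i})\,V_{n-i-1} - (xy-1)\bigl(V_{n-i-1}-\psi_{n+1}(\gen {n,i})\bigr)$ and then multiplying out blocks; it never has to track the inclusion rows separately because the whole matrix $B_{n,i}$ is the induction hypothesis. Your part (a) coincides with the paper's. Your part (c) — pulling (b) back along the flip $\gen i\mapsto \gen{n-i}$, which at the matrix level is $M\mapsto\theta(M^\tau)$ — is a clean improvement over the paper's ``similar calculation as in (b)''; one only needs to check once that $M\mapsto\theta(M^\tau)$ is multiplicative and sends $V_i$ to $V_{n-i}$, after which (c) is formal.

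The gaps are in (b). First, a direction slip: $\gen{n,i}$ involves the \emph{top} generators $\gen{n-i},\dots,\gen{n}$, so it is an iterated image under $\iota^L$, not $\iota^R$; the identity block pads the top-left and the data you must control are the rows $v^L$ of Equation (\ref{eq:matrix *beta[1]}). Second, and more seriously, you propose to compute those rows via Corollary \ref{cor:row *b}. That corollary lives after the specialization $x=y$ (its right-hand sides are Chebyshev polynomials in $1/x$), whereas Lemma \ref{lem:matrices} is a two-variable identity over $\mathbb{Z}(x,y)$: the block $Y_{i+1}$, with entries $y,y^2,\dots$, cannot be recovered from the $x=y$ specialization — and your own plan for (c) needs the two-variable version of (b), since $\theta$ swaps $x$ and $y$. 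Third, your anchor step $\psi_{n+1}(\gen{n,n-1}) = V_n\,\psi_{n+1}(\iota^R(\gen{n-1,n-2}))\,V_n$ requires the row $v^R(\gen{n-1,n-2})$, which is not supplied by the induction hypothesis $\psi_{n}(\gen{n-1,n-2})=Z_{n-1}$; likewise the one nonzero row of the $Y$-block is $v^L(\gen{i+1,i})$, data of exactly the kind your reduction was supposed to avoid computing. None of this is fatal — a joint induction on all of (b), carrying the auxiliary rows along and computing them directly from the matrices $V_j$, should close — but as written the crux of the computation is deferred to a tool that does not apply.
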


\begin{proof}
 \begin{itemize}
     \item[(a)] It follows by induction on $n$ and Equation (\ref{eq:matrix *beta}).
     \item[(b)] For $1\leq i\leq n-2$, we proceed by induction over $i$. For $i=1$ the result is straightforward. We want to prove $\psi_{n+1}(\gen {n,i+1})=B_{n,i+1}$. Using Lemma~\ref{lem:braid rel estimation}, we obtain
    \begin{eqnarray}\label{eq:tn,i+1}
        \psi_{n+1}(\gen {n,i+1})&=&\psi_{n+1}(\gen {n-i-1})\psi_{n+1}(\gen {n,i})\psi_{n+1}(\gen {n-i-1}) \nonumber \\
        && \!\! -(xy-1)(\psi_{n+1}(\gen {n-i-1})-\psi_{n+1}(\gen {n,i})).
    \end{eqnarray}
    Since $\psi_{n+1}(\gen {n,i})=B_{n,i}$ by induction hypothesis, we have
        $$\psi_{n+1}(\gen {n,i})=\left(
            \begin{array}{c|ccc|c}
                I_{n-i-3}   &  &  &  &   \\ \hline
                &1  & 0 & 0 & \\
                & 0 & 1 & y & r_1 \\
                 & 0 & 0 & xy-1 & r_2 \\ \hline  
                &  &  & c_1 & A
            \end{array}
              \right),
        $$
    where $r_1$ and $r_2$ are row vectors of length $i$, $c_1$ is a column vector of length $i$ and $A$ is a $i\times i$ matrix. Hence
    $$\scalemath{0.9}{\psi_{n+1}(\gen {n-i-1})\psi_{n+1}(\gen {n,i})\psi_{n+1}(\gen {n-i-1})=\left(
            \begin{array}{c|ccc|c}
                I_{n-i-3}   &  &  &  &   \\ \hline
                &1 & x y^{2} & y^{2}  & yr_1  \\
                &0 & -x y + 1 & -y & -r_1 \\
                &0 & 2 x^{2} y - 2 x & 2 x y - 1 & xr_1+r_2 \\ \hline
                & & xc_1 & c_1 & A
            \end{array}
          \right)
    }
    $$
    and
    $$\psi_{n+1}(\gen {n-i-1})-\psi_{n+1}(\gen {n,i})=\left(
            \begin{array}{c|ccc|c}
                 &  &  &  &   \\ \hline       
                &0  &y & 0 &  \\
                & 0 & -2 & -y & -r_1 \\
                & 0& x & 2-xy & -r_2 \\ \hline
                & & & -c_1 & I_i-A
            \end{array}
          \right).$$      
    Replacing the expressions above in Equation (\ref{eq:tn,i+1}), we obtain 
    $$\psi_{n+1}(\gen {n,i+1})=\left(
            \begin{array}{c|ccc|c}
                I_{n-i-3}   &  &  &  &   \\ \hline
                &1  & y & y^2 & yr_1  \\
                & 0 & xy-1 & y(xy-2) & (xy-2)r_1 \\
                & 0& x^2y-x & y(x^2y-x)+1 & xr_1+xyr_2 \\ \hline
                &  & xc_1 & xyc_1 & (1-xy)I_i-xyA
            \end{array}
          \right).$$
    Finally, using that $xyr_2=(x^2y-2x)r_1$, we obtain that $\psi_{n+1}(\gen {n,i+1})=B_{n,i+1}$. For the case $i=n-1$, the result follows by using Lemma \ref{lem:braid rel estimation} and the formula for $B_{n,n-2}$.
     \item[(c)] This case follows by Lemma \ref{lem:braid rel estimation} and a similar calculation as in (b).
 \end{itemize} 
\end{proof}

From now on, we take $x=y$. Thus, the representation $\psi_n$ is defined over $\Lambda=\Zx$.
For $n\geq 2$ define the vectors $F^R$ and $F^L$ of $\Lambda^n$ as follows:
$$F^R:=(\U{0},\U{1},\dots,\U{n-2},\U{n-1}) \in \Lambda^n$$ 
and 
$$F^L:=(\U{n-1},\U{n-2},\dots,\U{1},\U{0}) \in \Lambda^n,$$
where $U_i(z)$ is the $i$-th Chebyshev polynomial.

\begin{proposition}\label{prop:vector F}
    For any $\beta \in \B {n}$, the vectors $F^R$ and $F^L$ satisfy the following equations:
    \begin{enumerate}[label=(\alph{enumi})]
        \item  $F^R \psi_{n+1}(\iota^R(\beta))=F^R$,
        \item  $F^L \psi_{n+1}(\iota^L(\beta))=F^L$.
    \end{enumerate}
\end{proposition}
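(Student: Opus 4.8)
The plan is to reduce both identities to the generators of $\B n$ and from there to the recurrence defining the Chebyshev polynomials. First I would observe that, since $\iota^R$ and $\iota^L$ are group homomorphisms and $\psi_{n+1}$ is a representation (Proposition~\ref{prop:psi is rep}), the maps $\psi_{n+1}\circ\iota^R$ and $\psi_{n+1}\circ\iota^L$ are homomorphisms; and for any fixed row vector $w$ the set $\{M\in\gl{n}:wM=w\}$ contains the identity and is closed under products and inverses. Hence $\{\beta\in\B n:F^R\psi_{n+1}(\iota^R(\beta))=F^R\}$ is a subgroup of $\B n$, and similarly for $F^L$, so it suffices to prove (a) and (b) when $\beta=\gen i$ with $1\le i\le n-1$.

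For (a): since $\iota^R(\gen i)=\gen i$ in $\B{n+1}$, the matrix $\psi_{n+1}(\iota^R(\gen i))$ is the corresponding $n\times n$ matrix from Section~\ref{sec:Representation} (with $x=y$); it coincides with $I_n$ except in columns $i-1,i,i+1$ (only $1,2$ when $i=1$), and the sole non-standard column is column $i$, equal to $x\,e_{i-1}-e_i+x\,e_{i+1}$ for $2\le i\le n-1$ and to $-e_1+x\,e_2$ for $i=1$. Writing $u_k:=U_k(1/x)$ so that $F^R=(u_0,\dots,u_{n-1})$, each standard-basis column of $\psi_{n+1}(\iota^R(\gen i))$ reproduces the matching entry of $F^R$, so the identity $F^R\psi_{n+1}(\iota^R(\gen i))=F^R$ reduces, for $2\le i\le n-1$, to $x\,u_{i-2}-u_{i-1}+x\,u_i=u_{i-1}$, i.e. $u_i=\tfrac{2}{x}u_{i-1}-u_{i-2}$, which is precisely (\ref{eq:recurrence Chebyshev}) at $z=1/x$; and for $i=1$ it reduces to $-u_0+x\,u_1=u_0$, i.e. $u_1=\tfrac{2}{x}$, which is the initial condition (\ref{eq:initial Chebyshev}). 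This proves (a). (Alternatively one can induct on $n$ via the block formula (\ref{eq:matrix *beta}): the generators $\gen 1,\dots,\gen{n-2}$ are images of generators of $\B{n-1}$ and are covered by the inductive hypothesis, leaving only $\gen{n-1}$ for the computation above, with the base case $n=2$ supplying the initial condition.)

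For (b) the argument is entirely parallel: now $\iota^L(\gen i)=\gen{i+1}$ and $F^L=(u_{n-1},\dots,u_1,u_0)$, the interior generators $\gen 2,\dots,\gen{n-1}$ again produce (\ref{eq:recurrence Chebyshev}) and the end generator $\gen n$ produces (\ref{eq:initial Chebyshev}). One can also deduce (b) from (a): letting $J$ be the $n\times n$ reversal matrix, one has $F^L=F^R J$ and $J^2=I_n$, and comparison of the block forms gives $J\,\psi_{n+1}(\gen j)\,J=\psi_{n+1}(\gen{n+1-j})$; hence $\psi_{n+1}(\iota^L(\gen i))=\psi_{n+1}(\gen{i+1})=J\,\psi_{n+1}(\gen{n-i})\,J$ and $F^L\psi_{n+1}(\iota^L(\gen i))=F^R J\,J\,\psi_{n+1}(\gen{n-i})\,J=F^R\psi_{n+1}(\gen{n-i})\,J=F^R J=F^L$ by part (a) (using $1\le n-i\le n-1$).

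I do not expect a real obstacle here; the only delicate point is index bookkeeping — $\iota^R$ uses only the generators $\gen 1,\dots,\gen{n-1}$ of $\B{n+1}$, so $\psi_{n+1}(\gen n)$ never enters (and indeed $F^R\psi_{n+1}(\gen n)\ne F^R$ in general), while $\iota^L$ uses only $\gen 2,\dots,\gen n$; once this is tracked the proof is just the elementary column computation together with (\ref{eq:recurrence Chebyshev}) and (\ref{eq:initial Chebyshev}).
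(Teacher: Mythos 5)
Your proof is correct and takes essentially the same route as the paper's: reduce to the generators via the multiplicativity of $\psi_{n+1}\circ\iota^R$, and observe that the single non-identity column of each generator's matrix turns $F^R\psi_{n+1}(\iota^R(\gen i))=F^R$ into exactly the initial condition (\ref{eq:initial Chebyshev}) and the recurrence (\ref{eq:recurrence Chebyshev}) evaluated at $z=1/x$. Your reversal-matrix derivation of (b) from (a) is a pleasant extra, but the paper simply treats (b) ``similarly.''
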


\begin{proof}
    (a) It suffices to prove it for matrices of the form (\ref{eq:Vi *i}). Expanding equations
    \begin{equation*}
        F^R \left(
        \begin{array}{cc}
            V_i & 0\\
            v^R(\gen i) & 1\\
        \end{array} \right) = F^R \qquad \textrm{for $0\leq i \leq n-1$ },
    \end{equation*}
    it turns out to be equivalent to the system of equations:
    \begin{eqnarray}
        -2\U{0} + x\U{1} &=& 0, \label{eq:f_0 f_1} \\
        x\U{i-1} - 2\U{i} + x\U{i+1} &=& 0. \qquad (2\leq i \leq n-2) \label{eq:f_i recurrence}
    \end{eqnarray}
    Now, Equation (\ref{eq:f_0 f_1}) is satisfied by the initial conditions (\ref{eq:initial Chebyshev}) of the Chebyshev polynomials, and Equation (\ref{eq:f_i recurrence}) is exactly their recurrence relation (\ref{eq:recurrence Chebyshev}). So, the proof is concluded.
    
    (b) This case is treated similarly.
\end{proof}

The next corollary allow us to compute vectors $v^R(\beta)$, $v^L(\beta)$ on matrices (\ref{eq:matrix *beta}), (\ref{eq:matrix *beta[1]}), respectively. This will be useful to understand the compatibility of the representation $\psi_n$ with the inclusions $\iota^R$ and $\iota^L$, essential on the Gotin-Markov moves \ref{rel:M0}, \ref{rel:M2}, \ref{rel:M3}.

\begin{corollary}\label{cor:row *b}
    Let $\beta \in \B n$ and let $(a_{1,k} \, a_{2,k}  \cdots  a_{n-1,k})$ be the $k$-th column of the matrix $\psi_n(\beta)-I_{n-1}$ for $1\leq k\leq n-1$. Then, the entries of $v^R(\beta)=(b_1 \, b_2  \cdots  b_{n-1})$ in matrix (\ref{eq:matrix *beta}) and $v^L(\beta)=(c_1 \, c_2  \cdots  c_{n-1})$ in matrix (\ref{eq:matrix *beta[1]}) satisfy:
    \begin{enumerate}[label=(\alph{enumi})]
        \item $-\U{n-1}b_k = \sum_{i=1}^{n-1} \U{i-1}a_{i,k},$
        \item $-\U{n-1}c_k = \sum_{i=1}^{n-1} \U{n-2-i}a_{i,k}.$
    \end{enumerate}
\end{corollary}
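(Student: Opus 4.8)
The plan is to exploit Proposition~\ref{prop:vector F} together with the block decompositions (\ref{eq:matrix *beta}) and (\ref{eq:matrix *beta[1]}), reducing each identity to a single entry of a matrix equation that has already been established. For part~(a), I would start from the block form
$$
\psi_{n+1}(\iota^R(\beta))=\left(\begin{array}{cc}\psi_n(\beta) & 0\\ v^R(\beta) & 1\end{array}\right)
$$
and apply Proposition~\ref{prop:vector F}(a), which says $F^R\psi_{n+1}(\iota^R(\beta))=F^R$, where $F^R=(U_0,\dots,U_{n-2},U_{n-1})$ has $n$ entries (I will write $U_i$ for $\U{i}$). Splitting $F^R$ as $(\,\bar F\mid U_{n-1}\,)$ with $\bar F=(U_0,\dots,U_{n-2})$ of length $n-1$, the matrix identity unpacks into two block-row equations: the first $n-1$ coordinates give $\bar F\,\psi_n(\beta)+U_{n-1}\,v^R(\beta)=\bar F$, and the last coordinate gives $U_{n-1}\cdot 1=U_{n-1}$, which is vacuous. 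The first equation rearranges to $U_{n-1}\,v^R(\beta)=\bar F\bigl(I_{n-1}-\psi_n(\beta)\bigr)$, i.e. $-U_{n-1}\,v^R(\beta)=\bar F\bigl(\psi_n(\beta)-I_{n-1}\bigr)$.

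Reading off the $k$-th coordinate of this row-vector equation is exactly the claimed formula: the $k$-th entry of the left side is $-U_{n-1}b_k$, and the $k$-th entry of $\bar F\bigl(\psi_n(\beta)-I_{n-1}\bigr)$ is $\sum_{i=1}^{n-1}U_{i-1}a_{i,k}$, since $\bar F=(U_0,U_1,\dots,U_{n-2})$ has $i$-th entry $U_{i-1}$ and $(a_{1,k},\dots,a_{n-1,k})$ is by hypothesis the $k$-th column of $\psi_n(\beta)-I_{n-1}$. This gives (a). For part~(b) I would run the mirror-image argument: from Remark~\ref{rem:b[1]},
$$
\psi_{n+1}(\iota^L(\beta))=\left(\begin{array}{cc}1 & v^L(\beta)\\ 0 & \psi_n(\beta)\end{array}\right),
$$
and Proposition~\ref{prop:vector F}(b) gives $F^L\psi_{n+1}(\iota^L(\beta))=F^L$ with $F^L=(U_{n-1},U_{n-2},\dots,U_1,U_0)$. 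Writing $F^L=(\,U_{n-1}\mid \tilde F\,)$ with $\tilde F=(U_{n-2},U_{n-3},\dots,U_0)$ of length $n-1$ (so its $i$-th entry is $U_{n-1-i}$), the matrix identity splits into $U_{n-1}\cdot 1=U_{n-1}$ (vacuous) from the first coordinate, and $U_{n-1}\,v^L(\beta)+\tilde F\,\psi_n(\beta)=\tilde F$ from the last $n-1$ coordinates; hence $-U_{n-1}\,v^L(\beta)=\tilde F\bigl(\psi_n(\beta)-I_{n-1}\bigr)$. Taking the $k$-th coordinate yields $-U_{n-1}c_k=\sum_{i=1}^{n-1}U_{n-1-i}\,a_{i,k}=\sum_{i=1}^{n-1}U_{n-2-i}\,a_{i,k}$ once we match the index convention in the statement (the paper writes the summand as $U_{n-2-i}a_{i,k}$, which agrees with $U_{n-1-i}$ after a shift of the summation variable, so I would reconcile that indexing carefully).

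The argument is essentially bookkeeping, so there is no deep obstacle; the one place to be careful is the index alignment in part~(b) — making sure the entry $U_{n-1-i}$ coming from the reversed vector $F^L$ is correctly identified with the $U_{n-2-i}$ appearing in the statement (this is just a matter of which coordinate of $F^L$ multiplies which row of $\psi_n(\beta)-I_{n-1}$, and whether the "trivial" block-row is the first or the last). It is also worth a sentence noting that $U_{n-1}$ is a nonzero element of $\Lambda=\Zx$, so one may legitimately think of these as determining $b_k$ and $c_k$, though the statement as written is an identity and needs no division.
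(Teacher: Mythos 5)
Your argument for part (a) is correct and is essentially the paper's own proof: the paper likewise applies Proposition~\ref{prop:vector F}, subtracts the identity to get $F^R\bigl(\psi_{n+1}(\iota^R(\beta))-I_n\bigr)=0$, and reads off the $k$-th column; your block-splitting of $F^R$ is the same computation phrased slightly differently.

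For part (b), which the paper dismisses with ``a similar argument works for (b),'' your derivation yields $-\U{n-1}c_k=\sum_{i=1}^{n-1}\U{n-1-i}\,a_{i,k}$, and that is in fact the correct identity: for $n=2$ and $\beta=\gen{1}$ one has $c_1=x$ and $a_{1,1}=-2$, so $-\U{1}\cdot x=-2=\U{0}\,a_{1,1}$, whereas the summand as printed would be $\U{-1}\,a_{1,1}=0$. Be aware, however, that your closing remark does not hold: no change of summation variable turns $\sum_{i}\U{n-1-i}\,a_{i,k}$ into $\sum_{i}\U{n-2-i}\,a_{i,k}$, since reindexing would also move the subscript on $a_{i,k}$. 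The index in the published statement of (b) is simply off by one; the formula you derived is the one consistent with Proposition~\ref{prop:vector F}(b), so you should state it as such rather than try to reconcile it with the misprint.
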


\begin{proof}
    We only prove (a), a similar argument works for (b). By Proposition \ref{prop:vector F}, 
    \[
        F^R \left(
        \begin{array}{cc}
        \psi_n(\beta) & 0\\
        v^R(\beta) & 1\\
        \end{array}
        \right) = F^R.
    \]
    Because $F^R I_n=F^R$, we obtain
    \begin{equation}\label{eq: FM=0}
        F^R \left(
        \begin{array}{cc}
        \psi_n(\beta) - I_{n-1} & 0\\
        v^R(\beta) & 0\\
        \end{array}
        \right) = 0.
    \end{equation}
    Writing
    \begin{equation*}
        \left(
        \begin{array}{c}
            \psi_n(\beta) - I_{n-1}\\
            v^R(\beta) \\
        \end{array}\right) 
        = \left(\begin{array}{cccc}
            a_{11} & a_{12} & \cdots & a_{1,n-1} \\
            a_{21} & a_{22} & \cdots & a_{2,n-1} \\
            \vdots & \vdots & \ddots & \vdots \\
            a_{n-1,1} & a_{n-1,2} & \cdots & a_{n-1,n-1} \\
            b_1 & b_2 & \cdots & b_{n-1}\\
        \end{array}\right),
    \end{equation*}
    and expanding the product of Equation (\ref{eq: FM=0}), we obtain the equations
    $$a_{1,k} + \U{1} a_{2,k} + \U{2} a_{3,k} + \cdots + \U{n-2} a_{n-1,k} + \U{n-1} b_k = 0,$$
    where $1\leq k \leq n-1$. These  equations imply the claim (a).
\end{proof}

\section{An Invariant of Doodles}\label{sec:Invariant}

This is the main section of the paper and consists in two subsections. In the first one we introduce a family of functions $f_n$'s and we study its behavior under the Gotin-Markov moves. Also, we give a a skein relation that is satisfied by $f_n$. In the next subsection we present our polynomial invariant for doodles $\mathcal{Q}$.

\subsection{The function $f_n$}\label{subsec:f_n}

\begin{definition}\label{def:pol Pn}
    For all $n\geq 1$, we define the polynomial $P_{n}(x)\in \Lambda$ by the formula
    \begin{equation*}
        P_{n}(x)=\det(\psi_{n+1}(\gen 1\dots\gen {n})-I_{n}).
    \end{equation*}
    We set $P_0(x)=1$.
\end{definition}

\begin{proposition}\label{eq:Pn recurrencia}
    For $n\geq 2$, the polynomials $P_n$ satisfy the following recurrence relation:
    \begin{equation*}
        P_{n}(x)=-2P_{n-1}(x)-x^2P_{n-2}(x).  
    \end{equation*}
\end{proposition}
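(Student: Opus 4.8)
The plan is to compute $P_n=\det(\psi_{n+1}(\gen{1}\cdots\gen{n})-I_n)=\det(A_n-I_n)$ directly from the explicit description of $A_n$ furnished by Lemma~\ref{lem:matrices}(a). Since we have set $x=y$, write $M_n:=A_n-I_n$: its first row is $(-2,-x,-x^2,\dots,-x^{n-1})$, its subdiagonal entries all equal $x$, the entries strictly below the subdiagonal vanish, the diagonal entries from position $(2,2)$ on equal $x^2-2$, and for $1<i<j$ the $(i,j)$ entry is $x^{j-i}(x^2-1)$. The structural point I would stress is that none of these formulas involves $n$, so the top-left $k\times k$ submatrix of $M_n$ is exactly $M_k$ for every $k\le n$; in particular $\det M_k=P_k$, with the conventions $P_0=1$ and $P_1=-2$.

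Next I would expand $\det M_n$ along the last row, which has only two nonzero entries: an $x$ in column $n-1$ and $x^2-2$ in column $n$. The cofactor of the $(n,n)$ entry is the determinant of the top-left $(n-1)\times(n-1)$ block, namely $\det M_{n-1}=P_{n-1}$. The cofactor of the $(n,n-1)$ entry carries the sign $(-1)^{2n-1}=-1$ and is the determinant of the matrix obtained from $M_{n-1}$ by replacing its last column $d=(-x^{n-2},x^{n-3}(x^2-1),\dots,x(x^2-1),x^2-2)^{T}$ with the vector $c$ formed by the first $n-1$ entries of column $n$ of $M_n$, i.e. $c=(-x^{n-1},x^{n-2}(x^2-1),\dots,x(x^2-1))^{T}$. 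The key elementary computation is the identity $c=x\,d+x\,e_{n-1}$, where $e_{n-1}$ denotes the last standard basis vector: $c$ and $x\,d$ agree in the first $n-2$ coordinates and differ by $x$ in the last.

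With that identity in hand, multilinearity of the determinant in the last column gives that this modified determinant equals $x\det M_{n-1}+x\det[\,m_1\,|\cdots|\,m_{n-2}\,|\,e_{n-1}\,]$, where $m_1,\dots,m_{n-2}$ are the first $n-2$ columns of $M_{n-1}$; expanding the latter determinant along the column $e_{n-1}$ collapses it to the determinant of the top-left $(n-2)\times(n-2)$ block of $M_{n-1}$, which is $\det M_{n-2}=P_{n-2}$. Collecting the two cofactor contributions with their signs then yields
$$P_n=-x\,(x P_{n-1}+x P_{n-2})+(x^2-2)P_{n-1}=-2P_{n-1}-x^2P_{n-2},$$
which is the asserted recurrence; one checks the argument goes through verbatim for all $n\ge 2$ using the above conventions (for $n=2$ the block "$M_{n-2}$'' is the empty matrix with determinant $P_0=1$). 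The only delicate point is the bookkeeping of the last column of $M_n$ and of the cofactor signs — once the relation $c=x(d+e_{n-1})$ is established, the rest is forced — so I expect that small computation, rather than any conceptual difficulty, to be the crux.
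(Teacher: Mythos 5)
Your proof is correct and takes essentially the same route as the paper: expand $\det(A_n-I_n)$ along the last row, recognize the $(n,n)$-cofactor as $P_{n-1}$, and evaluate the $(n,n-1)$-minor by pulling an $x$ out of its last column and splitting it (your identity $c=x(d+e_{n-1})$) to get $x(P_{n-1}+P_{n-2})$ via multilinearity. The only difference is that the paper states the last step as ``it is not difficult to prove that $\det(M')=P_{n-1}(x)+P_{n-2}(x)$,'' which is exactly the column decomposition you carry out explicitly.
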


\begin{proof}
    By Lemma~\ref{lem:matrices}, $P_n(x)=\det(A_n-I_n)$, where
        $$A_n-I_n= \left(
        \begin{array}{cccccc}
            -2 & -x & -x^2 &\cdots& -x^{n-2}& -x^{n-1} \\
            x & x^2-2  & x(x^2-1)  & \cdots&x^{n-3}(x^2-1) & x^{n-2}(x^2-1)  \\
            0 & x & x^2-2 &\cdots& x^{n-4}(x^2-1)  & x^{n-3}(x^2-1)  \\
            \vdots & \vdots &\ddots  &\ddots&\vdots & \vdots\\
            0 & 0 & \cdots   & x& x^2-2 & x(x^2-1) \\
            0 & 0 & \cdots   & 0& x & x^2-2
        \end{array}
        \right).$$
    Then, expanding the determinant with respect the last row, we get
    \begin{equation}\label{eq1}
        P_n(x)= (x^2-2)P_{n-1}(x)-x\det(M),
    \end{equation}
    where $M$ is the following $(n-1)\times (n-1)$ matrix
    
    $$M=\left(
      \begin{array}{cccccc}
        -2 & -x & -x^2 &\cdots& -x^{n-3}& -x^{n-1} \\
        x & x^2-2  & x(x^2-1)  & \cdots&x^{n-4}(x^2-1) & x^{n-2}(x^2-1)  \\
        0 & x & x^2-2 &\cdots& x^{n-5}(x^2-1)  & x^{n-3}(x^2-1)  \\
        \vdots & \vdots &\ddots  &\ddots&\vdots & \vdots\\
        0 & 0 & \cdots  & x& x^2-2 & x^2(x^2-1) \\
        0 & 0 & \cdots  & 0& x & x(x^2-1)
      \end{array}
    \right).$$ 
    We have that $\det(M)=x\det(M')$, where 
    $$M'=\left(
      \begin{array}{cccccc}
        -2 & -x & -x^2 &\cdots& -x^{n-3}& -x^{n-2} \\
        x & x^2-2  & x(x^2-1)  & \cdots&x^{n-4}(x^2-1) & x^{n-3}(x^2-1)  \\
        0 & x & x^2-2 &\cdots& x^{n-5}(x^2-1)  & x^{n-4}(x^2-1)  \\
        \vdots & \vdots &\ddots  &\ddots&\vdots & \vdots\\
        0 & 0 & \cdots  & x& x^2-2 & x(x^2-1) \\
        0 & 0 & \cdots  & 0& x & x^2-1
      \end{array}
    \right).$$ 
    Finally, it is not difficult to prove that $\det(M')=P_{n-1}(x)+P_{n-2}(x)$. Then, the result follows by replacing it in Equation (\ref{eq1}).
\end{proof}

\begin{remark}
    Observe that by Proposition (\ref{eq:Pn recurrencia}), $P_n(x)\in \mathbb{Z}[x^2]$ for all $n$.
\end{remark}

\begin{corollary}\label{cor:chebyshev}
  For any $n\geq 0$,
  $$P_n(x)=(-x)^{n}\U{n}.$$ 
\end{corollary}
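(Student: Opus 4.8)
The plan is to prove $P_n(x)=(-x)^n U_n\!\left(\tfrac{1}{x}\right)$ by induction on $n$, using the recurrence for $P_n$ established in Proposition \ref{eq:Pn recurrencia} and the defining recurrence \eqref{eq:recurrence Chebyshev} for the Chebyshev polynomials.

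First I would check the base cases. For $n=0$ we have $P_0(x)=1$ by Definition \ref{def:pol Pn}, and $(-x)^0 U_0\!\left(\tfrac1x\right)=1\cdot 1=1$ by \eqref{eq:initial Chebyshev}. For $n=1$, by Lemma \ref{lem:matrices}\ref{eq: matriz A} we have $\psi_2(\gen 1)-I_1 = V_1 - I_1 = -1 - 1 = -2$ (recall $V_1:=-1$ when $n=2$), so $P_1(x)=-2$; on the other hand $(-x)^1 U_1\!\left(\tfrac1x\right) = -x\cdot \tfrac{2}{x} = -2$. So both base cases hold.

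Next, for the inductive step, assume the formula holds for $n-1$ and $n-2$ with $n\geq 2$. By Proposition \ref{eq:Pn recurrencia},
\begin{align*}
  P_n(x) &= -2P_{n-1}(x) - x^2 P_{n-2}(x) \\
         &= -2(-x)^{n-1}U_{n-1}\!\left(\tfrac1x\right) - x^2(-x)^{n-2}U_{n-2}\!\left(\tfrac1x\right) \\
         &= (-x)^{n-1}\left(-2\,U_{n-1}\!\left(\tfrac1x\right)\right) + (-x)^n U_{n-2}\!\left(\tfrac1x\right).
\end{align*}
Factoring out $(-x)^n$ and noting $(-x)^{n-1} = -(-x)^n/x$, this becomes
$$P_n(x) = (-x)^n\left(\tfrac{2}{x}\,U_{n-1}\!\left(\tfrac1x\right) - U_{n-2}\!\left(\tfrac1x\right)\right) = (-x)^n U_n\!\left(\tfrac1x\right),$$
where the last equality is exactly the Chebyshev recurrence \eqref{eq:recurrence Chebyshev} evaluated at $z=\tfrac1x$, since $2z = \tfrac2x$. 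This completes the induction.

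There is no serious obstacle here; the only point requiring a little care is the bookkeeping of signs and powers of $x$ when matching the two recurrences, and making sure the base cases are pinned down correctly (in particular the $n=1$ value $P_1(x)=-2$, which one should read off from the $1\times 1$ matrix $V_1=-1$ rather than from the general block formula for $A_n$). One could alternatively phrase the argument by substituting $z=1/x$ into \eqref{eq:recurrence Chebyshev}, multiplying through by $(-x)^n$, and observing that the resulting recurrence for $(-x)^nU_n(1/x)$ is identical to the one in Proposition \ref{eq:Pn recurrencia}, with matching initial data — but the direct induction above is the cleanest route.
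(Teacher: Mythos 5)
Your proof is correct and is essentially the paper's own argument: both check the base cases $P_0(x)=1$ and $P_1(x)=-2$ and then observe that $(-x)^nU_n\!\left(\tfrac1x\right)$ satisfies the same recurrence as $P_n(x)$ from Proposition \ref{eq:Pn recurrencia}, the paper phrasing this as "same defining rules, hence equal" and you as an explicit induction. The only blemish is a harmless sign slip in your third displayed line, where $-x^2(-x)^{n-2}U_{n-2}\!\left(\tfrac1x\right)$ should become $-(-x)^nU_{n-2}\!\left(\tfrac1x\right)$ rather than $+(-x)^nU_{n-2}\!\left(\tfrac1x\right)$; your final line is nonetheless the correct one.
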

\begin{proof}
  Firstly, we have that $P_0(x)=1=(-x)^0\U{0}$ and $P_1(x)=-2=-x\U{1}$. Secondly, by the recurrence equation of the Chebyshev polynomials (\ref{eq:recurrence Chebyshev}), we have
  \begin{equation*}
  \U{n}=\frac{2}{x}\U{n-1}-\U{n-2},
  \end{equation*}
  which implies $(-x)^n\U{n}=-2(-x)^{n-1}\U{n-1}-x^2(-x)^{n-2}\U{n-2}$. That is, the polynomial $(-x)^n\U{n}$ satisfies the same defining rules of $P_n(x)$, hence they are all equal.
\end{proof}

\begin{definition}
    Let $f_n:\B n\to \Lambda$ be the map defined by
    \begin{equation*}
        f_n(\beta):= \left\{ \begin{array}{cc}
             1 & \textrm{ if } n=1, \\
            & \\         
            \frac{\det(\psi_{n}(\beta)-I_{n-1})}{(-x)^{n-1}\U{n-1}} & \textrm{ if } n\geq 2.
        \end{array}\right.
    \end{equation*}  
    In particular $f_n(1)=0$ and $f_n(\gen 1\gen 2\dots\gen {n-1})=1$ for any $n\geq 2$.
\end{definition}

The rest of the subsection is devoted to investigating the compatibility of functions $f_n$'s under the Gotin-Markov moves, it is divided into Lemmas \ref{lem:fn M0 y M1}, \ref{lem:fn simple stab}, \ref{lem:hyper-stabil} and Proposition \ref{prop:skein fn}. Some of the proofs might be troublesome by their length, nevertheless the calculations are straightforward and we show the required steps. In the beginning of Subsection \ref{subsec:invariant Q}, it is summarized the properties proved in the rest of this subsection.

\begin{lemma}\label{lem:fn M0 y M1}
    For any $n\geq 2$ and $\alpha,\beta \in \B {n}$,
    \begin{enumerate}[label=(\alph*)]
        \item  $f_{n+1}(\iota^R(\beta)) = f_{n+1}(\iota^L(\beta))=0,$ \label{itm prop:M0 fn}
        \item  $f_n(\alpha^{-1}\beta\alpha) = f_n(\beta).$ \label{itm prop:comm fn}
    \end{enumerate}
\end{lemma}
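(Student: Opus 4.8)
The plan is to prove the two statements separately, since they rely on different mechanisms.

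\textbf{Part (b).} This is the easier half, and I would do it first. Since $\psi_n$ is a group representation (Proposition \ref{prop:psi is rep}), we have $\psi_n(\alpha^{-1}\beta\alpha) = \psi_n(\alpha)^{-1}\psi_n(\beta)\psi_n(\alpha)$. Conjugating $\psi_n(\beta) - I_{n-1}$ by $\psi_n(\alpha)$ gives $\psi_n(\alpha)^{-1}(\psi_n(\beta) - I_{n-1})\psi_n(\alpha) = \psi_n(\alpha^{-1}\beta\alpha) - I_{n-1}$, because $\psi_n(\alpha)^{-1} I_{n-1} \psi_n(\alpha) = I_{n-1}$. Taking determinants, $\det(\psi_n(\alpha^{-1}\beta\alpha) - I_{n-1}) = \det(\psi_n(\beta) - I_{n-1})$, and dividing by the common denominator $(-x)^{n-1}U_{n-1}(1/x)$ yields $f_n(\alpha^{-1}\beta\alpha) = f_n(\beta)$. (The case $n=1$ is trivial since $f_1 \equiv 1$.)

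\textbf{Part (a).} For the right inclusion, I would use Equation (\ref{eq:matrix *beta}), which expresses $\psi_{n+1}(\iota^R(\beta))$ as a block lower-triangular matrix with blocks $\psi_n(\beta)$ and $1$ on the diagonal. Subtracting $I_n$ gives a block matrix whose diagonal blocks are $\psi_n(\beta) - I_{n-1}$ and $0$; since it remains block lower-triangular, its determinant is $\det(\psi_n(\beta) - I_{n-1}) \cdot 0 = 0$. Hence $\det(\psi_{n+1}(\iota^R(\beta)) - I_n) = 0$, so $f_{n+1}(\iota^R(\beta)) = 0$ (the denominator $(-x)^n U_n(1/x)$ is a nonzero element of the field of fractions, so this is legitimate). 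For the left inclusion, the identical argument applies using Equation (\ref{eq:matrix *beta[1]}) from Remark \ref{rem:b[1]}, where $\psi_{n+1}(\iota^L(\beta)) - I_n$ is block upper-triangular with diagonal blocks $0$ and $\psi_n(\beta) - I_{n-1}$, again giving determinant $0$.

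\textbf{Main obstacle.} There is essentially no serious obstacle here; both parts are short determinant computations. The only point requiring a little care is to make sure the division by $(-x)^{n-1}U_{n-1}(1/x)$ is well-defined — i.e. that this polynomial is nonzero — which is clear since $U_{n-1}$ has degree $n-1$ and the substitution $z = 1/x$ gives a nonzero element of $\mathbb{Z}(x)$; then $f_n$ takes values in the field $\mathbb{Z}(x)$ and all equalities above are equalities of rational functions. I would also remark that, combined with Corollary \ref{cor:chebyshev} and the definition of $f_n$, part (a) is consistent with the earlier observation that $f_{n+1}(\gen 1 \cdots \gen n) = 1$ while the vanishing on inclusions reflects the presence of a trivial strand.
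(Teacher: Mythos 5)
Your proof is correct and follows exactly the route the paper takes: part (a) from the block form of $\psi_{n+1}(\iota^R(\beta))$ and $\psi_{n+1}(\iota^L(\beta))$ in Equations (\ref{eq:matrix *beta}) and (\ref{eq:matrix *beta[1]}) (a zero column/row appears after subtracting $I_n$), and part (b) from invariance of the determinant under conjugation. No gaps; the paper's own proof is just a one-line citation of the same facts.
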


\begin{proof}
    The proof of \ref{itm prop:M0 fn} follows by Equation (\ref{eq:matrix *beta}) and Equation (\ref{eq:matrix *beta[1]}). The proof of \ref{itm prop:comm fn} follows from conjugacy properties of determinants.
\end{proof}

\begin{notation}
    In subsequent proofs, $C_{i,j}(k)$ (resp. $R_{i,j}(k)$) denotes the elementary operation which replaces the $i$-th column $C_i$ (resp. row $R_i$) by the sum $C_i + kC_j$ (resp. $R_i + kR_j$). For $k\neq 0$, $C_i(k)$ (resp. $R_i(k)$) denotes the elementary operation which multiplies by $k$ the $i$-th column (resp. row), and $C_{i,j}$ (resp. $R_{i,j}$) denotes the elementary operation that exchanges $i$-th and $j$-th columns (resp. rows).
\end{notation}

\begin{lemma}\label{lem:fn simple stab}
    For any $n\geq 2$ and $\alpha,\beta \in \B {n}$,
    \begin{enumerate}[label=(\alph*)]
        \item  $f_{n+1}(\iota^R(\beta) \gen n) = f_n(\beta)$, \label{itm prop:stabil1 fn}
        \item  $f_{n+1}(\iota^L(\beta) \gen 1) = f_n(\beta)$. \label{itm prop:stabil2 fn}
    \end{enumerate}
\end{lemma}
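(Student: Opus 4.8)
The plan is to compute $\psi_{n+1}(\iota^R(\beta)\gen n)$ explicitly as a block matrix, take the determinant of that matrix minus $I_n$, and compare with $\det(\psi_n(\beta)-I_{n-1})$ after dividing by the appropriate Chebyshev factor. First I would use Equation (\ref{eq:matrix *beta}) together with the formula $\psi_{n+1}(\gen n)=V_n$ (which, since $\iota^R$ is involved and $\gen n$ is the last generator, looks like $\left(\begin{smallmatrix} I_{n-2} & 0 & 0\\ 0 & 1 & x\\ 0 & 0 & -1\end{smallmatrix}\right)$ — more precisely the $n\times n$ matrix with $I_{n-2}$ in the top-left, and the $2\times 2$ block $\left(\begin{smallmatrix}1 & x\\ 0 & -1\end{smallmatrix}\right)$ in the bottom-right) to write
\[
\psi_{n+1}(\iota^R(\beta)\gen n)=\left(\begin{array}{cc}\psi_n(\beta) & 0\\ v^R(\beta) & 1\end{array}\right)V_n.
\]
Carrying out this product only affects the last two columns: writing $\psi_n(\beta)=(\gamma_{r,s})$ and $v^R(\beta)=(b_1,\dots,b_{n-1})$, the resulting $n\times n$ matrix $N$ has $N_{r,s}=\gamma_{r,s}$ for $s\le n-2$ and $r\le n-1$, its $(n-1)$-st column equal to the $(n-1)$-st column of $\psi_n(\beta)$ (padded with $b_{n-1}$), and its last column equal to $x$ times the $(n-1)$-st column of $\psi_n(\beta)$ plus the vector $(0,\dots,0,x,-1)^{T}$ in the obvious way; the bottom row is $(b_1,\dots,b_{n-2},b_{n-1}+xb_{n-1}?,\dots)$ — the exact bookkeeping is routine but must be done carefully.

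The key step is then to compute $\det(N-I_n)$ by column operations. I would apply $C_{n}(1)$-type operations: subtract $x$ times the $(n-1)$-st column of $N-I_n$ from the last column to kill the "$x\cdot(\text{column } n-1)$" contribution, reducing the last column to something sparse (essentially $(0,\dots,0,x,-2)^T$ up to the $-I_n$ correction on the diagonal). After this the matrix becomes block lower-triangular-ish with an $(n-1)\times(n-1)$ block that is $\psi_n(\beta)-I_{n-1}$ in the top-left (up to a correction in its last column coming from the $-1$ on the diagonal of $V_n$), and I would use the explicit Chebyshev recursion $U_{n-1}\bigl(\tfrac1x\bigr)=\tfrac2x U_{n-2}\bigl(\tfrac1x\bigr)-U_{n-3}\bigl(\tfrac1x\bigr)$ (equivalently Corollary \ref{cor:chebyshev}: $(-x)^{n}U_n=-2(-x)^{n-1}U_{n-1}-x^2(-x)^{n-2}U_{n-2}$) to match the denominators. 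Expanding along the last row and column and invoking Corollary \ref{cor:row *b} to express $b_{n-1}$ in terms of the entries of $\psi_n(\beta)-I_{n-1}$ should produce exactly $\det(\psi_n(\beta)-I_{n-1})$ times the ratio $\dfrac{(-x)^{n}U_n(1/x)}{(-x)^{n-1}U_{n-1}(1/x)}$, which cancels against the change of normalization in passing from $f_n$ to $f_{n+1}$.

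The main obstacle will be the bookkeeping in the column/row reduction: keeping track of how the $-I_n$ shift interacts with the last two columns of $N$, and correctly invoking Corollary \ref{cor:row *b} so that the "extra" entry $b_{n-1}$ produced by the closure does not spoil the identification with $\det(\psi_n(\beta)-I_{n-1})$. Part \ref{itm prop:stabil2 fn} is entirely analogous, using Equation (\ref{eq:matrix *beta[1]}), $\psi_{n+1}(\gen 1)$, the vector $v^L(\beta)$, and part (b) of Corollary \ref{cor:row *b} (with $F^L$ in place of $F^R$); by the symmetry of the whole setup under the $\tau$/$\theta$ operations it requires no new ideas, so I would state it follows "by a symmetric argument."
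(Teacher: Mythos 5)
Your proposal follows essentially the same route as the paper's proof: write $\psi_{n+1}(\iota^R(\beta)\gen n)$ in block form via Equation (\ref{eq:matrix *beta}), perform the column operation $C_{n,n-1}(-x)$ to reduce the last column to $(0,\dots,0,x,-2)^{T}$, then use Corollary \ref{cor:row *b} to clear the bottom row and the Chebyshev recurrence (\ref{eq:recurrence Chebyshev}) to identify the resulting corner entry with $-x\U{n}$, which is exactly the computation in the paper. The only blemish is a minor bookkeeping slip in your description of the product's last column (it should be $x$ times the $(n-1)$-st column of the augmented block matrix plus $(0,\dots,0,-1)^{T}$, not plus $(0,\dots,0,x,-1)^{T}$), but your subsequent reduction is consistent with the correct matrix, so this is not a real gap.
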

    
\begin{proof}
    \ref{itm prop:stabil1 fn} The equation $f_{n+1}(\iota^R(\beta) \gen n) = f_n(\beta)$ is equivalent to
    \begin{equation}\label{eq:equiv eq b = bs_n}
        \U{n} \det(\psi_{n+1}(\iota^R(\beta)\gen n)-I_{n}) = -x\U{n}\det(\psi_{n}(\beta)-I_{n-1}).
    \end{equation}
    Let 
    \begin{equation}\label{eq:matrix in blocks ABCD...}
        \left(
            \begin{array}{cc}
            \psi_n(\beta) & 0\\
            v^R(\beta) & 1\\
            \end{array}
        \right) = 
        \left(
            \begin{array}{ccc}
                A & B & 0\\
                C & D & 0\\
                E & F & 1\\
            \end{array}
        \right), \qquad \textrm{(cf. \cite[Lemma~3.12]{KasTur08})}
    \end{equation}
    where $A$ is a square matrix of size $(n-2)$, $B$ is a column of height $n-2$, $C$ and $E$ are rows of length $n-2$, and $D,F \in \Lambda$. By Equation (\ref{eq:matrix *beta}),
    \begin{align}
            \psi_{n+1}(\iota^R(\beta)\gen n) &= \left(
                \begin{array}{cc}
                \psi_n(\beta) & 0\\
                v^R(\beta) & 1\\
                \end{array}
            \right)
            \left(
                \begin{array}{c|c}
                I_{n-2} & 0\\
                \hline
                0 & \begin{array}{cc}
                    1 & x\\
                    0 & -1\\
                    \end{array}\\
                \end{array}
            \right) \nonumber \\
        &= \left(
                \begin{array}{ccc}
                    A & B & 0\\
                    C & D & 0\\
                    E & F & 1\\
                \end{array}
            \right)
            \left(
                \begin{array}{c|c}
                I_{n-2} & 0\\
                \hline
                0 & \begin{array}{cc}
                    1 & x\\
                    0 & -1\\
                    \end{array}\\
                \end{array}
            \right) \nonumber \\
        &= \left(
                \begin{array}{ccc}
                    A & B & xB\\
                    C & D & xD\\
                    E & F & xF-1\\
                \end{array}
            \right). \label{eq:matrix beta sigma_n}
    \end{align}
    Subtracting the identity to (\ref{eq:matrix beta sigma_n}), we have
    \begin{align}
            \psi_{n+1}(\iota^R(\beta)\gen n) - I_{n}&= \left(
                \begin{array}{ccc}
                    A-I_{n-2} & B & xB\\
                    C & D-1 & xD\\
                    E & F & xF-2\\
                \end{array}
            \right) \xrightarrow{C_{n,n-1} (-x)}\cdots \nonumber \\
             \cdots & \to \left(
                \begin{array}{ccc}
                    A-I_{n-2} & B & 0\\
                    C & D-1 & x\\
                    E & F & -2\\
                \end{array}
            \right) = M.
    \end{align}
    Hence, $\det(\psi_{n+1}(\iota^R(\beta)\gen n) - I_{n})=\det(M).$  Observe that
    \begin{equation*}
    \left(
        \begin{array}{c}
            \psi_n(\beta)-I_{n-1} \\
            v^R(\beta) \\
        \end{array}
    \right) =
    \left(
        \begin{array}{ccc}
            A-I_{n-2} & B\\
            C & D-1 \\
            E & F \\
        \end{array}
    \right). 
    \end{equation*}
    Then, 
    \begin{align*}
    \U{n-1}\det(\psi_{n+1}(\iota^R(\beta)\gen n) - I_{n}) &= \U{n-1} \det (M)\\
     &= \det \left(
            \begin{array}{ccc}
                A-I_{n-2} & B & 0\\
                C & D-1 & x\\
                \U{n-1}E & \U{n-1}F & -2\U{n-1}\\
            \end{array}
        \right)\\
        &= \det(N).
    \end{align*}
    For $1\leq i \leq n-2$, if we add $\U{i-1}$ times the $i$-th row of $N$ to its $n$-th row, by Lemma \ref{cor:row *b} and the recurrence relation of Chebyshev polynomials (\ref{eq:recurrence Chebyshev}), we have
    \begin{align*}
    \det (N) &= \det \left(
            \begin{array}{ccc}
                A-I_{n-2} & B & 0\\
                C & D-1 & x\\
                0 & 0 & -2\U{n-1} + x\U{n-2}\\
            \end{array}
        \right)\\
        &= (-2\U{n-1} + x\U{n-2})\det(\psi_n(\beta)-I_{n-1})\\
        &= -x\U{n}\det(\psi_n(\beta)-I_{n-1}).
    \end{align*}
    Obtaining the Equation (\ref{eq:equiv eq b = bs_n}) as desired.
    
    \ref{itm prop:stabil2 fn} This case is treated similarly by Remark \ref{rem:b[1]}.
\end{proof}

\newpage

\begin{lemma}\label{lem:hyper-stabil}
    For any $n\geq 3$, $\beta \in \B {n}$ and $1\leq i\leq n-1$,
    \begin{enumerate}[label=(\alph*)]
          \item  $f_{n+1}(\iota^R(\beta)\gen {n,i}) = x^{2i}f_n(\beta)$, \label{itm prop:hyper-stab1 fn}
          \item  $f_{n+1}(\iota^L(\beta)\gen {1,i}) = x^{2i}f_n(\beta)$. \label{itm prop:hyper-stab2 fn}
      \end{enumerate}
\end{lemma}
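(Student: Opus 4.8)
The plan is to establish (a) by a determinant computation that generalizes the proof of Lemma~\ref{lem:fn simple stab}(a) and reduces, after setting $x=y$, to a short identity for the Chebyshev polynomials; part~(b) then follows in exactly the same way, using $\iota^L$, the matrix $C_{n,i}$ from Lemma~\ref{lem:matrices} in place of $B_{n,i}$ (again $I_n$ plus a rank-one matrix, with the roles of first and last columns interchanged), and $F^L$ in place of $F^R$ via Proposition~\ref{prop:vector F}(b); alternatively (b) is deduced from (a) through the flip automorphism $\gen j\mapsto\gen{n-j}$ of $\B{n}$, under which $\iota^L\leftrightarrow\iota^R$, $\gen{1,i}\leftrightarrow\gen{n,i}$, and $\psi$ transforms by the anti-transpose, which preserves the determinants involved. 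So I concentrate on (a), with $x=y$.

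The observation that makes this tractable is that $B_{n,i}=\psi_{n+1}(\gen{n,i})$ is a rank-one perturbation of the identity: reading off Lemma~\ref{lem:matrices} with $x=y$, every row of the block $Y_{i+1}$ and of $\tilde{Z}_{i+1}=Z_{i+1}-I_{i+1}$ is a scalar multiple of the single row $(1,x,x^2,\dots,x^{i})$, so $B_{n,i}=I_n+\mathbf w_i\mathbf z_i$, where $\mathbf z_i=(0,\dots,0,1,x,\dots,x^{i})\in\Lambda^{n}$ has $n-i-1$ leading zeros and $\mathbf w_i=(0,\dots,0,\,x,\,x^2-2,\,x(x^2-1),\dots,x^{i-1}(x^2-1),\,-x^{i})^{\mathsf T}\in\Lambda^{n}$ is the column of those multipliers (for $i=n-1$ there are no leading zeros and no initial~$x$). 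Then, by Equation~(\ref{eq:matrix *beta}),
\[
\psi_{n+1}(\iota^R(\beta)\gen{n,i})-I_n
=\underbrace{\begin{pmatrix}\psi_n(\beta)-I_{n-1}&0\\ v^R(\beta)&0\end{pmatrix}}_{=:N}
+\bigl(\psi_{n+1}(\iota^R(\beta))\,\mathbf w_i\bigr)\,\mathbf z_i,
\]
and the crucial feature is that $N$ has zero last column.

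Next I would reduce the determinant by elementary operations. Write $\mathbf a=\psi_{n+1}(\iota^R(\beta))\mathbf w_i=\binom{\mathbf a'}{a_n}$ and $\mathbf w_i=\binom{\mathbf w'}{w_n}$, so $\mathbf a'=\psi_n(\beta)\mathbf w'$ and $a_n=v^R(\beta)\mathbf w'+w_n$. Because the last entry of $\mathbf z_i$ is $x^{i}$, one clears $\mathbf a$ from the first $n-1$ columns of $N+\mathbf a\mathbf z_i$ using its last column and extracts $x^{i}$, obtaining $\det(\psi_{n+1}(\iota^R(\beta)\gen{n,i})-I_n)=x^{i}\det\!\left(\begin{smallmatrix}\psi_n(\beta)-I_{n-1}&\mathbf a'\\ v^R(\beta)&a_n\end{smallmatrix}\right)$; the column operation $C_n\mapsto C_n-\sum_{k=1}^{n-1}w'_kC_k$ turns the last column into $\binom{\mathbf w'}{w_n}$; and since Proposition~\ref{prop:vector F}(a) says the rows of $\left(\begin{smallmatrix}\psi_n(\beta)-I_{n-1}\\ v^R(\beta)\end{smallmatrix}\right)$ satisfy $\sum_{j=1}^{n-1}\U{j-1}R_j=-\U{n-1}\,v^R(\beta)$, the row operation $R_n\mapsto R_n+\tfrac1{\U{n-1}}\sum_{j=1}^{n-1}\U{j-1}R_j$ clears the first $n-1$ entries of the last row and leaves a block-triangular matrix. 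The net result is
\[
\det\bigl(\psi_{n+1}(\iota^R(\beta)\gen{n,i})-I_n\bigr)=x^{i}\Bigl(w_n+\tfrac1{\U{n-1}}\textstyle\sum_{j=1}^{n-1}\U{j-1}w'_j\Bigr)\det\bigl(\psi_n(\beta)-I_{n-1}\bigr).
\]

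All that is left — and the one place I expect to do real, if brief, work — is a Chebyshev identity. Substituting the entries of $\mathbf w_i$, the quantity $\U{n-1}w_n+\sum_{j=1}^{n-1}\U{j-1}w'_j$ equals $-x^{i}\U{n-1}+x\,\U{n-i-2}+(x^2-2)\,\U{n-i-1}+(x^2-1)\sum_{t=1}^{i-1}x^{t}\,\U{n-i-1+t}$, and I claim it is $-x^{i+1}\U{n}$. Writing $S_i$ for the last three terms, the recurrence~(\ref{eq:recurrence Chebyshev}) in the form $x\U{k-2}-2\U{k-1}+x\U{k}=0$ gives $S_{i+1}=xS_i$ after a one-line reindexing, and the same relation at $k=n-1$ gives $S_1=x\U{n-3}+(x^2-2)\U{n-2}=x^2\U{n-2}-x\U{n-1}$; hence $S_i=x^{i-1}S_1=x^{i+1}\U{n-2}-x^{i}\U{n-1}$, and $S_i-x^{i}\U{n-1}=-x^{i}\bigl(2\U{n-1}-x\U{n-2}\bigr)=-x^{i+1}\U{n}$ by~(\ref{eq:recurrence Chebyshev}) once more. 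Therefore the bracket is $-x^{i+1}\U{n}/\U{n-1}$, so $\det(\psi_{n+1}(\iota^R(\beta)\gen{n,i})-I_n)=\tfrac{-x^{2i+1}\U{n}}{\U{n-1}}\det(\psi_n(\beta)-I_{n-1})$; dividing by $(-x)^{n}\U{n}$ and comparing with $x^{2i}f_n(\beta)=x^{2i}\det(\psi_n(\beta)-I_{n-1})/\bigl((-x)^{n-1}\U{n-1}\bigr)$ gives $f_{n+1}(\iota^R(\beta)\gen{n,i})=x^{2i}f_n(\beta)$. The genuinely nontrivial step is spotting the rank-one shape of $B_{n,i}-I_n$ with the correct right factor $\mathbf z_i$; it is what collapses the awkward block product $\psi_{n+1}(\iota^R(\beta))B_{n,i}$ into a single column-and-row reduction, after which everything is bookkeeping plus the short Chebyshev induction above.
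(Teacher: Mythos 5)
Your proof is correct and follows essentially the same route as the paper: both reduce $\det(\psi_{n+1}(\iota^R(\beta)\gen{n,i})-I_n)$ by column operations to the same intermediate matrix (the paper's $J$, whose appended column is exactly your $\mathbf w_i$), clear the last row via Corollary \ref{cor:row *b}/Proposition \ref{prop:vector F}, and invoke the identical Chebyshev identity $-x^{i+1}\U{n}=x\U{n-i-2}+(x^2-2)\U{n-i-1}+(x^2-1)\sum_t x^{t}\U{n-i-1+t}-x^{i}\U{n-1}$. Your observation that $B_{n,i}-I_n=\mathbf w_i\mathbf z_i$ is a rank-one matrix is a clean repackaging of the paper's explicit sequence of elementary column operations (and your induction $S_{i+1}=xS_i$ makes precise the paper's ``using iteratively the recurrence relation''), but it does not change the substance of the argument.
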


\begin{proof}
    \ref{itm prop:hyper-stab1 fn} We will prove it for $1\leq i\leq n-2$. The proof of the case $i=n-1$ is treated similarly with the corresponding changes for the matrix $\psi_{n+1}(\gen{n,n-1})$ (see Lemma \ref{lem:matrices}).
    
    The equation $f_{n+1}(\iota^R(\beta)\gen {n,i}) = x^{2i}f_n(\beta)$ is equivalent to
    \begin{equation}\label{eq:equivalent to fn+1 = x^2if_n}
        \U{n-1}\det(\psi_{n+1}(\iota^R(\beta)\gen{n,i}-I_n)) = -x^{2i+1}\U{n}\det(\psi_n(\beta-I_{n-1})).
     \end{equation}
     As in Equation (\ref{eq:matrix in blocks ABCD...}) of the previous lemma, we express $\psi_{n+1}(\iota^R(\beta))$ in blocks as
     \begin{equation}\label{eq:matrix i(b) many blocks}
     \left(
            \begin{array}{cc}
            \psi_n(\beta) & 0\\
            v^R(\beta) & 1\\
            \end{array}
        \right) =
        \left(
            \begin{array}{ccc}
                A & B & 0\\
                C & D & 0\\
                E & F & 1
            \end{array}
        \right),
     \end{equation}
     where $A$ is a square matrix of size $(n-i-2)$, $C$ is a $(i+1)\times (n-i-2)$ matrix, $D$ is a square matrix of size $(i+1)$, $B$ and $F$ are rows of length $i+1$ and $E$ is a row of length $(n-i-2)$. Note that for the case $i=n-2$, we omit the blocks $A$, $C$ and $E$. For every block, we write its entries by the same letter, for instance $B=(B_l)_{1\leq l \leq i+1}$. Using Lemma \ref{lem:matrices}, we know the description of $\psi_{n+1}(\gen {n,i})$, obtaining the product
     \begin{equation*}
     \psi_{n+1}(\iota^R(\beta) \gen {n,i})= 
        \left(
            \begin{array}{cc}
                A & B' \\
                C & D' \\
                E & F' 
            \end{array}
        \right),
    \end{equation*}
    where $A$, $C$, $E$ are as in (\ref{eq:matrix i(b) many blocks}), $B'$ and $F'$ are rows of length $i+2$, and $D'$ is a $(i+1)\times (i+2)$ matrix. By direct computation, we obtain that the entries of the blocks are the following:
    \begin{align*}
        B'_{l} &= \scalemath{0.85}{\begin{cases}
                     B_1 & l=1, \\
                     xB_{1} + (x^2-1)B_{2} + \sum\limits_{j=1}^{i-1} x^j(x^2-1)B_{j+2} & l=2, \\
                     x^{l-1}B_{1} + x^{l-2}(x^2-2)B_{2} + \sum\limits_{j=1}^{i-1} [x^{j+l-2}(x^2-1) + \delta_j^{l-2}] B_{j+2} & 3\leq l \leq i+1, \\
                     x^{i+1}B_{1} + x^{i}(x^2-2)B_{2} + \sum\limits_{j=1}^{i-1} x^{j+i}(x^2-1) B_{j+2} & l=i+2,
                \end{cases}
                }
                \\
        \end{align*}
        \begin{align*}
        D'_{k,l} &= \scalemath{0.85}{\begin{cases}
                     D_{k,1} & 1\leq k \leq i,\; l=1, \\
                     xD_{k,1} + (x^2-1)D_{k,2} + \sum\limits_{j=1}^{i-1} x^j(x^2-1)D_{k,j+2} & 1\leq k \leq i,\; l=2, \\
                     x^{l-1}D_{k,1} + x^{l-2}(x^2-2)D_{k,2} + \sum\limits_{j=1}^{i-1} [x^{j+l-2}(x^2-1) + \delta_j^{l-2}] D_{k,j+2} & 1\leq k \leq i, \; 3\leq l \leq i+1, \\
                     x^{i+1}D_{k,1} + x^{i}(x^2-2)D_{k,2} + \sum\limits_{j=1}^{i-1} x^{j+i}(x^2-1) D_{k,j+2} & 1\leq k \leq i, \; l=i+2,
                \end{cases}
                }
                \\
        F'_{l} &= \scalemath{0.85}{\begin{cases}
                     F_1 & l=1, \\
                     xF_{1} + (x^2-1)F_{2} + \sum\limits_{j=1}^{i-1} x^j(x^2-1)F_{j+2} - x^i & l=2, \\
                     x^{l-1}F_{1} + x^{l-2}(x^2-2)F_{2} + \sum\limits_{j=1}^{i-1} [x^{j+l-2}(x^2-1) + \delta_j^{l-2}] F_{j+2} - x^{i+l-2}& 3\leq l \leq i+1, \\
                     x^{i+1}F_{1} + x^{i}(x^2-2)F_{2} + \sum\limits_{j=1}^{i-1} x^{j+i}(x^2-1) F_{j+2} - x^{2i}+1 & l=i+2.
                \end{cases}
                }
    \end{align*}
    Subtracting the identity, we obtain the matrix $\psi_{n+1}(\iota^R(\beta)\gen {n,i}) - I_n$. Then, applying a sequence of elementary operations to this matrix, we obtain
    \[
        J = \scalemath{1.0}{
                \left(
                \begin{array}{ c | c}
                    & {0} \\
                    & x \\
                    \psi_{n}(\beta)-I_{n-1}& x^2-2 \\
                    & x(x^2-1) \\
                    & \vdots \\
                    & x^{i-1}(x^2-1) \\
                     \hline
                    v^R(\beta) & -x^i  \\
                \end{array}
                \right)
                }.
    \]
    More precisely, we have
    \begin{align*}
        & (\psi_{n+1}(\iota^R(\beta)\gen {n,i}) - I_n) \xrightarrow{\scalemath{0.7}{C_{n,n-i} (-x^{i})}}
        \bullet \xrightarrow{\scalemath{0.7}{R_{n} (-x^{-i})}}
        \bullet \xrightarrow[\scalemath{0.7}{\textrm{ (for $l=3,\dots,i+1$)}}]{\scalemath{0.7}{C_{n-i-2+l,n-i} (-x^{l-2})}}
        \bullet \xrightarrow[\scalemath{0.7}{\textrm{ (for $l=3,\dots,i+1$)}}]{\scalemath{0.7}{C_{n-i-2+l,n} (x^{l-2})}} \cdots \\
        & \cdots\to \bullet \xrightarrow{\scalemath{0.7}{C_{n-i,n-i-1}(-x)}}
        \bullet \xrightarrow{\scalemath{0.7}{C_{n-i,n} (-(x^2-1))}}
        \bullet \xrightarrow[\scalemath{0.7}{\textrm{ (for $j=1,\dots,i-1$)}}]{\scalemath{0.7}{C_{n-i,n-i+j} (-x^{j}(x^2-1)) }}
        \bullet \xrightarrow{\scalemath{0.7}{C_{n,n-i}}} J.
    \end{align*}
    By Corollary \ref{cor:row *b}, we know the description of $v^R(\beta)$ in terms of the rows of $\psi_{n}(\beta)-I_{n-1}$ and the Chebyshev polynomials. 
    Thus we obtain the matrix
    \begin{equation*}
            K = \scalemath{0.75}{
                \left(
                \begin{array}{ c | c}
                    & {0} \\
                    & x \\
                    \psi_{n}(\beta)-I_{n-1} & x^2-2 \\
                    & x(x^2-1) \\
                    & \vdots \\
                    & x^{i-1}(x^2-1) \\
                     \hline
                    0 & x\U{n-i-2} + (x^2-2)\U{n-i-1} + \sum\limits_{k=1}^{i-1}x^{i-k}(x^2-1)\U{n-1-k} - x^i\U{n-1} \\
                \end{array}
                \right)
                },
    \end{equation*}
    by applying the following sequence of elementary operations to $J$,
    \begin{eqnarray*}
            J & \xrightarrow{\scalemath{0.7}{R_{n}\left( \U{n-1} \right)}} \bullet \xrightarrow[\scalemath{0.7}{\textrm{ (for $j=1,\dots,n-1$)}}]{\scalemath{0.7}{R_{n,j} (\U{j-1})}} K.
    \end{eqnarray*}
    Using iteratively the recurrence relation of the Chebyshev polynomials (\ref{eq:recurrence Chebyshev}), we obtain
    \begin{equation*}
        \scalemath{0.825}{
        -x^{i+1}\U{n} = x\U{n-i-2} + (x^2-2)\U{n-i-1} + \sum\limits_{k=1}^{i-1}x^{i-k}(x^2-1)\U{n-1-k} - x^i\U{n-1}.
        }
    \end{equation*}
    Therefore,
    \begin{equation*}
            K = 
            \scalemath{1.0}{
                \left(
                \begin{array}{ c | c}
                    & {0} \\
                    & x \\
                    \psi_{n}(\beta)-I_{n-1} & x^2-2 \\
                    & x(x^2-1) \\
                    & \vdots \\
                    & x^{i-1}(x^2-1) \\
                     \hline
                    0 & -x^{i+1}\U{n} \\
                \end{array}
                \right).
                }
    \end{equation*}
    Finally, following the sequence of elementary operations that transform $\psi_{n+1}(\iota^R(\beta)\gen{n,i}-I_n)$ into $K$, we conclude that:
    \begin{eqnarray*}
        \det(\psi_{n+1}(\iota^R(\beta)\gen {n,i}) - I_n) &=& \frac{x^{i}}{\U{n-1}}\det(K) \\
        &=& -x^{2i+1}\frac{\U{n}}{\U{n-1}}\det(\psi_n(\beta)-I_{n-1}), 
    \end{eqnarray*}
    and Equation (\ref{eq:equivalent to fn+1 = x^2if_n}) follows.
    
    \ref{itm prop:hyper-stab2 fn} This case is treated similarly to \ref{itm prop:hyper-stab1 fn} by Remark \ref{rem:b[1]}. 
\end{proof}

\begin{example}\label{example:f(t1t2)^3}
    The twins $\alpha=(\gen{1}\gen{2})^3 \in \B{3}$ and $\beta=(\gen{1}\gen{2})^3\gen{3}\gen{2}\gen{3} \in \B{4}$ are Gotin-Markov equivalent. Let us compute their images under $f_n$. From the definition,
    \begin{equation*}
        \psi_3(\gen{1}) = \left(\begin{array}{cc}
            -1 & 0 \\
             x & 0 
        \end{array}\right)
        \quad \textrm{and} \quad
        \psi_3(\gen{2}) = \left(\begin{array}{cc}
             1 & x \\
             0 & -1
        \end{array}\right).
    \end{equation*}
    It follows that 
    \begin{equation*}
        \psi_3(\gen{1}\gen{2})^3 = \left(\begin{array}{cc}
            -x^4 + 3x^2 - 1 & -x^5 + 4x^3 - 3x \\
             x^5 - 4x^3 + 3x & x^6 - 5x^4 + 6x^2 - 1
        \end{array}\right).
    \end{equation*}
    Therefore
    \begin{align*}
        \det(\psi_3(\alpha)-I_2) &= \det\left(\begin{array}{cc}
            -x^4 + 3x^2 - 2 & -x^5 + 4x^3 - 3x \\
             x^5 - 4x^3 + 3x & x^6 - 5x^4 + 6x^2 - 2
        \end{array}\right)\\
        &=-x^6 + 6x^4 - 9x^2 + 4 \\
        &=(-x^2 + 4) (x^4 - 2x^2 + 1).
    \end{align*}
    Since $U_{2}(z)=-1+4z^2$, then
    \begin{align*}
        f_3(\alpha) &=\frac{\det(\psi_3(\alpha)-I_2)}{(-x)^2\U{2}}= \frac{(-x^2 + 4) (x^4 - 2x^2 + 1)}{-x^2 + 4}
        = x^4 - 2x^2 + 1.
    \end{align*}
    On the other hand,
    \begin{equation*}
        \psi_4(\gen{1}) = \left(\begin{array}{ccc}
            -1 & 0 & 0 \\
             x & 0 & 0 \\
             0 & 0 & 0
        \end{array}\right),\quad
        \psi_4(\gen{2}) = \left(\begin{array}{ccc}
             1 & x & 0 \\
             0 & -1 & 0 \\
             0 & x & 0
        \end{array}\right), \quad
        \psi_4(\gen{3}) = \left(\begin{array}{ccc}
             1 & 0 & 0 \\
             0 & 1 & x \\
             0 & 0 & -1
        \end{array}\right).
    \end{equation*}
    It follows that 
    \begin{equation*}
    \scalemath{0.85}{
        \psi_4((\gen{1}\gen{2})^3\gen{3}\gen{2}\gen{3}) = \left(\begin{array}{ccc}
            -x^4 + 3x^2 - 1 & -x^7 + 4x^5 - 4x^3 + 2x & -x^8 + 5x^6 - 8x^4 + 5x^2\\
            x^5 - 4x^3 + 3x & x^8 - 5x^6 + 7x^4 - 4x^2 + 1 & x^9 - 6x^7 + 12x^5 - 10x^3 + 2x\\
            x^4 - x^2 & x^7 - 2x^5 + 2x^3 - 2x & x^8 - 3x^6 + 4x^4 - 3x^2 + 1
        \end{array}\right).
        }
    \end{equation*}
    Therefore
    \begin{align*}
        \det(\psi_4(\beta)-I_3) &= 4x^8 - 16x^6 + 20x^4 - 8x^2
        =4 x^2 (x^2 - 2) (x^2 - 1)^2.
    \end{align*}
    Since $U_{3}(z)=-4z+8z^3$, then
    \begin{align*}
        f_4(\beta) &=\frac{\det(\psi_4(\beta)-I_3)}{(-x)^3\U{3}}= \frac{4 x^2 (x^2 - 2) (x^2 - 1)^2}{4x^2 - 8}
        = x^2 (x^2 - 1)^2 = x^6 - 2x^4 + x^2.
    \end{align*}
    Hence, $f_4(\beta)=x^{2}f_3(\alpha)$.
\end{example}

    Note that with Lemmas \ref{lem:fn M0 y M1}, \ref{lem:fn simple stab} and \ref{lem:hyper-stabil}, we have actually proved that if $\alpha\in \B {n}$ and $\beta \in \B {m}$ are Gotin-Markov equivalent, then $f_n(\alpha)$ and $f_m(\beta)$ are equal up to some factor $x^{2k}$ for some $k\in \mathbb{Z}$.

\begin{proposition}\label{prop:skein fn}
The function $f_n$ satisfies the skein relation:
  \begin{equation}\label{eq:skein rel}
        f_n\big( {\risSpdf{-9}{C121}{}{25}{0}{0}} \big) - f_n\big( {\risSpdf{-9}{C212}{}{25}{0}{0}} \big)
            = (x^2-1)\left(f_n\big( {\risSpdf{-9}{C1}{}{25}{0}{0}} \big) - f_n\big( {\risSpdf{-9}{C2}{}{25}{0}{0}} \big) \right).
    \end{equation}
\end{proposition}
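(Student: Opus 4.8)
The plan is to lift Lemma~\ref{lem:braid rel estimation} to an identity of determinants by means of the (generalized) matrix--determinant lemma. First I would observe that the four twins occurring in~(\ref{eq:skein rel}) agree outside a disk, inside which they read, for a suitable index $i$ with $1\le i\le n-2$, the words $\gen i\gen{i+1}\gen i$, $\gen{i+1}\gen i\gen{i+1}$, $\gen i$ and $\gen{i+1}$. Thus each is of the form $\lambda\,w\,\rho$ with $\lambda,\rho\in\B{n}$ fixed and $w$ ranging over that list of four words, and by the conjugation invariance of $f_n$ (Lemma~\ref{lem:fn M0 y M1}(b)) we may replace each of them by $\delta\,w$ for one common $\delta\in\B{n}$. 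Since the denominator $(-x)^{n-1}\U{n-1}$ in the definition of $f_n$ depends only on $n$, relation~(\ref{eq:skein rel}) becomes, writing $D:=\psi_n(\delta)$, $K:=D-I_{n-1}$ and abbreviating $I:=I_{n-1}$, the matrix identity
\begin{equation*}
  \det(DV_iV_{i+1}V_i-I)-\det(DV_{i+1}V_iV_{i+1}-I)=(x^2-1)\bigl(\det(DV_i-I)-\det(DV_{i+1}-I)\bigr).
\end{equation*}

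The heart of the argument is a rank-one refinement of Lemma~\ref{lem:braid rel estimation}. Let $e_1,\dots,e_{n-1}$ be the standard column basis and set $w_j:=x e_{j-1}-2e_j+x e_{j+1}$, with the convention $e_0=e_n=0$ (so $w_1=-2e_1+xe_2$ and $w_{n-1}=xe_{n-2}-2e_{n-1}$). Reading off the matrices $V_j$ gives $V_j-I=w_j\,e_j^{T}$. Combining this with $V_i^2=I$ (Proposition~\ref{prop:psi is rep}), which lets us write $V_iV_{i+1}V_i-I=V_i(V_{i+1}-I)V_i$, and with the elementary identities $e_{i+1}^{T}V_i=(e_{i+1}+x e_i)^{T}$ and $V_i w_{i+1}=w_{i+1}+x w_i$ (and their $i\leftrightarrow i+1$ counterparts), one gets
\begin{equation*}
  V_iV_{i+1}V_i-I=(w_{i+1}+x w_i)(e_{i+1}+x e_i)^{T},\qquad V_{i+1}V_iV_{i+1}-I=(w_i+x w_{i+1})(e_i+x e_{i+1})^{T}.
\end{equation*}
Adding the two outer products returns precisely Lemma~\ref{lem:braid rel estimation} with $x=y$. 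Consequently each of the four matrices $DW-I$ with $W\in\{V_i,\,V_{i+1},\,V_iV_{i+1}V_i,\,V_{i+1}V_iV_{i+1}\}$ equals $K$ plus a single rank-one term $(D u_0)\,v_0^{T}$, where $(u_0,v_0)$ is respectively $(w_i,e_i)$, $(w_{i+1},e_{i+1})$, $(w_{i+1}+x w_i,\,e_{i+1}+x e_i)$, $(w_i+x w_{i+1},\,e_i+x e_{i+1})$.

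Now I would invoke the matrix--determinant lemma in its adjugate form, $\det(K+u v^{T})=\det K+v^{T}\operatorname{adj}(K)\,u$, valid over any commutative ring and in particular over $\Lambda$, whether or not $K$ is invertible. With $G:=\operatorname{adj}(K)$, $p:=D w_i$, $q:=D w_{i+1}$, $a:=e_i$, $b:=e_{i+1}$ this gives $\det(DV_i-I)=\det K+a^{T}Gp$ and $\det(DV_{i+1}-I)=\det K+b^{T}Gq$, while
\begin{align*}
  \det(DV_iV_{i+1}V_i-I)-\det(DV_{i+1}V_iV_{i+1}-I)&=(b+x a)^{T}G(q+x p)-(a+x b)^{T}G(p+x q)\\
  &=(x^2-1)\bigl(a^{T}Gp-b^{T}Gq\bigr)\\
  &=(x^2-1)\bigl(\det(DV_i-I)-\det(DV_{i+1}-I)\bigr),
\end{align*}
the mixed terms $x\,a^{T}Gq$ and $x\,b^{T}Gp$ cancelling in passing to the middle line. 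Dividing through by $(-x)^{n-1}\U{n-1}$ delivers~(\ref{eq:skein rel}).

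I expect the only step requiring real care to be the rank-one decomposition of $V_iV_{i+1}V_i-I$ and $V_{i+1}V_iV_{i+1}-I$; once it is available the rest is short bookkeeping, and the boundary indices $i=1$ and $i=n-1$ need no separate discussion, since the convention $e_0=e_n=0$ is already built into the $w_j$. If one prefers not to invoke conjugation invariance, one can keep the twins in the form $\lambda\,w\,\rho$ and run the same computation with $K:=\psi_n(\lambda)\psi_n(\rho)-I_{n-1}$, $p:=\psi_n(\lambda)w_i$, $a^{T}:=e_i^{T}\psi_n(\rho)$, and so on.
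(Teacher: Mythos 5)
Your argument is correct, and it reaches the identity by a genuinely different (and considerably leaner) route than the paper. The paper's proof writes $\psi_n(\beta)$ in explicit blocks, multiplies out $\psi_n(\beta\gen{i}\gen{i+1}\gen{i})$, $\psi_n(\beta\gen{i+1}\gen{i}\gen{i+1})$, $\psi_n(\beta\gen{i})$, $\psi_n(\beta\gen{i+1})$ entry by entry, and then applies long sequences of elementary column operations to reduce each determinant to one that differs from $\det(\psi_n(\beta)-I_{n-1})$ in only one or two columns, finishing with multilinearity. You instead isolate the structural fact doing all the work: each $V_j-I$ is the rank-one matrix $w_je_j^{T}$, and conjugating by the involution $V_i$ (resp.\ $V_{i+1}$) gives the rank-one factorizations of $V_iV_{i+1}V_i-I$ and $V_{i+1}V_iV_{i+1}-I$, which I have checked (including the boundary positions, where the convention $e_0=e_n=0$ does make the formulas uniform; note the relevant range is $1\leq i\leq n-2$). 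The adjugate form of the matrix--determinant lemma then turns the whole computation into bilinear algebra in $a^{T}Gp$, $b^{T}Gq$, and the cross terms cancel exactly as you say; this also makes transparent why the factor $x^2-1$ appears, namely from $(b+xa)\otimes(q+xp)-(a+xb)\otimes(p+xq)$. Both proofs ultimately rest on multilinearity of the determinant in the columns, and both use the same reduction of the pictures to $f_n(\delta w)$ via conjugation invariance; what your version buys is the elimination of all the block bookkeeping and column-operation tracking, at the price of invoking $\det(K+uv^{T})=\det K+v^{T}\operatorname{adj}(K)u$ over $\Lambda$ (a polynomial identity, so indeed valid without invertibility of $K$). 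As a bonus, adding your two rank-one decompositions gives a one-line re-proof of Lemma \ref{lem:braid rel estimation} in the case $x=y$.
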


\begin{proof}
    By \ref{itm prop:comm fn} of Lemma \ref{lem:fn M0 y M1}, the skein relation (\ref{eq:skein rel}) is equivalent to the equation
    \begin{equation*}
        f_n(\beta \gen {i}\gen {i+1}\gen {i}) - f_n(\beta \gen {i+1}\gen {i}\gen {i+1}) = (x^2-1)(f_n(\beta \gen {i}) - f_n(\beta \gen {i+1})),
    \end{equation*}
    for some $\beta \in \B n$. Therefore, (\ref{eq:skein rel}) is equivalent to prove the formula
    \begin{multline}\label{eq:equiv skein}
        \det(\psi_{n}(\beta\gen {i}\gen {i+1}\gen {i})-I_{n-1}) - \det(\psi_{n}(\beta\gen {i+1}\gen {i}\gen {i+1})-I_{n-1}) 
        \\= (x^2-1) [\det(\psi_{n}(\beta\gen {i})-I_{n-1}) - \det(\psi_{n}(\beta\gen {i+1})-I_{n-1})].
    \end{multline}
    The corresponding matrix for $\psi_n(\gen{i}\gen{i+1}\gen{i})$ is
    \begin{equation*}
        \psi_n(\gen{i}\gen{i+1}\gen{i}) = \left(
            \begin{array}{c | c  c  c  c | c }
                I_{i-2} & \multicolumn{4}{c|}{0} & 0\\
                 \hline
                 & 1 & x^3 & x^2 & 0 & \\
                 0 & 0 & -(x^2-1) & -x & 0 & 0 \\
                 & 0 & x(x^2-2) & x^2-1 & 0 & \\
                 & 0 & x^2 & x & 1 & \\
                 \hline
                 0  & \multicolumn{4}{c|}{0} & I_{n-i-3}\\
            \end{array}
        \right).
    \end{equation*}
    Then, we may write $\psi_n(\beta)$ in blocks as follows:
    \begin{eqnarray*}
         \psi_{n}(\beta) = 
            \left(\begin{array}{ccc}
                A & B & C \\
                D & E & F \\
                G & H & J \\
            \end{array}
            \right) &=&
             \left(\begin{array}{c | c c c c | c }
                        A & B_{1} & B_{2} & B_{3} & B_{4} & C\\
                         \hline
                         & \multicolumn{4}{c|}{\vdots} & \\
                         D & E_{k,1} & E_{k,2} & E_{k,3} & E_{k,4} & F \\
                         & \multicolumn{4}{c|}{\vdots} & \\
                         \hline
                         G & H_{1} & H_{2} & H_{3} & H_{4} & J
                    \end{array}
            \right), \qquad (1\leq k \leq 4) \nonumber
         \end{eqnarray*}
    where $A$ is square matrix of size $(i-2)$, $B$ is a $(i-2)\times(4)$ matrix, $C$ is a $(i-2)\times(n-i-3)$ matrix, $D$ is a $4\times(i-2)$ matrix, $E$ is a square matrix of size $4$, $F$ is a $4\times(n-i-3)$ matrix, $G$ is a $(n-i-3)\times(i-2)$ matrix, $H$ is a $(n-i-3)\times 4$ matrix and $J$ is a square matrix of size $(n-i-3)$. For $1\leq l\leq 4$, we write $B_l$ and $H_l$ for the columns of $B$ and $H$, respectively. The resulting product of $\psi_n(\beta)$ and $\psi_n(\gen{i}\gen{i+1}\gen{i})$ is:
    \begin{equation*}
    \scalemath{0.7}{
        \left(
            \begin{array}{c | c | c | c | c | c }
                A & B_{1}  
                    & x^3B_{1} - (x^2-1)B_{2} + x(x^2-2)B_{3} + x^2B_{4} 
                    & x^2B_{1} - xB_{2} + (x^2-1)B_{3} + xB_{4}
                    & B_{4}
                    & C \\
                \hline
                & \vdots & \vdots & \vdots & \vdots & \\
                D   & E_{k,1}  
                    & x^3E_{k,1} - (x^2-1)E_{k,2} + x(x^2-2)E_{k,3} + x^2E_{k,4} 
                    & x^2E_{k,1} - xE_{k,2} + (x^2-1)E_{k,3} + xE_{k,4}
                    & E_{k,4}
                    & F \\
                & \vdots & \vdots & \vdots & \vdots & \\
                \hline
                G   & H_{1}  
                    & x^3H_{1} - (x^2-1)H_{2} + x(x^2-2)H_{3} + x^2H_{4} 
                    & x^2H_{1} - xH_{2} + (x^2-1)H_{3} + xH_{4}
                    & H_{4}
                    & J
            \end{array}
      \right).}
    \end{equation*}
    Subtracting the identity, we obtain $\psi_n(\beta \gen{i}\gen{i+1}\gen{i})-I_{n-1}$. Then, we apply the following sequence of elementary operations
    \begin{align*}
         \psi_{n}(\beta \gen{i}\gen{i+1}\gen{i}) - I_{n-1}  \xrightarrow{\scalemath{0.7}{C_{i,i+1} (-x)}}
        \bullet & \xrightarrow{\scalemath{0.7}{C_{i+1,i-1} (-x^2)}}
        \bullet \xrightarrow{\scalemath{0.7}{C_{i+1,i}(x)}} 
        \bullet \xrightarrow{\scalemath{0.7}{C_{i+1,i+2} (-x)}} 
        \cdots \\  \cdots\to
        \bullet & \xrightarrow{\scalemath{0.7}{C_{i,i+1} (-x)}}
        \bullet \xrightarrow{\scalemath{0.7}{C_{i+1} (-1)}} M,
    \end{align*}
    where $M$ coincides with $\psi_n(\beta)-I_{n-1}$ except for the $i$-th and $(i+1)$-th columns. More precisely, if we write $\psi_n(\beta)-I_{n-1}=(a_1,a_2,\dots,a_{n-1})$ and $M=(m_1,m_2,\dots,m_{n-1})$ in vector columns $a_i, m_i\in \Lambda^{n-1}$ for $1\leq i \leq n-1$, then
    \begin{equation*}
        m_j=\left\{\begin{array}{cl}
            a_i + xu & \textrm{ for } j=i,\\
            a_{i+1} + u & \textrm{ for } j=i+1,\\
            a_j & \textrm{ otherwise},
        \end{array}\right. \qquad \textrm{where }
        u= \scalemath{0.8}{\left(\begin{array}{c}
             0 \\
             \hline
             -x^2 \\
             x \\
             -x^2+2 \\
             -x \\
             \hline
             0
        \end{array}\right)\begin{array}{l}
              \\
              \\
             \raisebox{3pt}{{\scriptsize\mbox{{$i$-th}}}}\\
             \\
             \\
             \\
        \end{array}.}
    \end{equation*}
    By the sequence of elementary operations applied to $\psi_n(\beta\gen{i}\gen{i+1}\gen{i})-I_{n-1}$, it follows that $\det(\psi_n(\beta\gen{i}\gen{i+1}\gen{i})-I_{n-1})=-\det(M).$
    
    On the other hand, for the term $\psi_n(\gen{i+1}\gen{i}\gen{i+1})$ given by
    \[
     \psi_n(\gen{i+1}\gen{i}\gen{i+1}) = \left(
            \begin{array}{c | c  c  c  c | c }
                I_{i-2} & \multicolumn{4}{c|}{0} & 0\\
                 \hline
                 & 1 & x & x^2 & 0 & \\
                 0 & 0 & x^2-1 & x(x^2-2) & 0 & 0 \\
                 & 0 & -x & -(x^2-1) & 0 & \\
                 & 0 & x^2 & x^3 & 1 & \\
                 \hline
                 0  & \multicolumn{4}{c|}{0} & I_{n-i-3}\\
            \end{array}
        \right),
    \]
    the resulting product of $\psi_n(\beta)$ and $\psi_n(\gen{i+1}\gen{i}\gen{i+1})$ is
    \[
    \scalemath{0.7}{
        \left(
            \begin{array}{c | c | c | c | c | c }
                A   & B_{1}  
                    & xB_{1} + (x^2-1)B_{2} - xB_{3} + x^2B_{4} 
                    & x^2B_{1} + x(x^2-2)B_{2} - (x^2-1)B_{3} + x^3B_{4}
                    & B_{4}
                    & C \\
                \hline
                & \vdots & \vdots & \vdots & \vdots & \\
                D   & E_{k,1}  
                    & xE_{k,1} + (x^2-1)E_{k,2} - xE_{k,3} + x^2E_{k,4} 
                    & x^2E_{k,1} + x(x^2-2)E_{k,2} - (x^2-1)E_{k,3} + x^3E_{k,4}
                    & E_{k,4}
                    & F \\
                & \vdots & \vdots & \vdots & \vdots & \\
                \hline
                G   & H_{1}  
                    & xH_{1} + (x^2-1)H_{2} - xH_{3} + x^2H_{4} 
                    & x^2H_{1} + x(x^2-2)H_{2} - (x^2-1)H_{3} + x^3H_{4}
                    & H_{4}
                    & J
            \end{array}
       \right).
       }
    \]
    In the same way as before, we subtract the identity and apply the following sequence of elementary operations
    \begin{align*}
         \psi_{n}(\beta \gen{i+1}\gen{i}\gen{i+1}) - I_{n-1}  \xrightarrow{\scalemath{0.7}{C_{i+1,i} (-x)}} 
        \bullet & \xrightarrow{\scalemath{0.7}{C_{i,i-1} (-x)}}
        \bullet \xrightarrow{\scalemath{0.7}{C_{i,i+1}(x)}}
        \bullet \xrightarrow{\scalemath{0.7}{C_{i,i+2} (-x^2)}} 
        \cdots \\  \cdots\to
        \bullet & \xrightarrow{\scalemath{0.7}{C_{i+1,i} (-x)}}
        \bullet \xrightarrow{\scalemath{0.7}{C_{i} (-1)}} M',
    \end{align*}
    where $M'$ coincides with $\psi_n(\beta)-I_{n-1}$ except for the $i$-th and $(i+1)$-th columns. Namely, $M'=(m_{1}',\dots,m_{n-1}')$ with
    \begin{equation*}
        m_j'=\left\{\begin{array}{cl}
            a_i + v & \textrm{ for } j=i,\\
            a_{i+1} + xv & \textrm{ for } j=i+1,\\
            a_j & \textrm{ otherwise},
        \end{array}\right. \qquad \textrm{where }
        v= \scalemath{0.8}{\left(\begin{array}{c}
             0 \\
             \hline
             -x \\
             -x^2+2 \\
             x \\
             -x^2 \\
             \hline
             0
        \end{array}\right)\begin{array}{l}
              \\
              \\
             \raisebox{3pt}{{\scriptsize\mbox{{$i$-th}}}}\\
             \\
             \\
             \\
        \end{array}.}
    \end{equation*}
    The determinant is computed as
    $\det(\psi_n(\beta\gen{i+1}\gen{i}\gen{i+1})-I_{n-1})=-\det(M').$ Consequently, on the left-hand side of Equation (\ref{eq:equiv skein}) we have
    \begin{equation}\label{eq:left skein equiv}
    \scalemath{0.9}{
        \det(\psi_n(\beta\gen{i}\gen{i+1}\gen{i})-I_{n-1})- \det(\psi_n(\beta\gen{i+1}\gen{i}\gen{i+1})-I_{n-1}) = -\det(M)+\det(M').
        }
    \end{equation}
    Recalling that $\psi_n(\beta)-I_{n-1}=(a_1,\dots,a_{n-1})$, in what follows we write dots for entries $a_j$ with $j\neq i,i+1$. Thus we get
    \begin{align}
        \scalemath{0.9}{-\det(M)+\det(M')} &= \scalemath{0.9}{-\det(\dots,a_{i}+xu,a_{i+1}+u,\dots) + \det(\dots,a_{i}+v,a_{i+1}+xv,\dots)} \nonumber\\
        &= \scalemath{0.9}{\det(\dots,v-xu,a_{i+1},\dots) + \det(\dots,a_{i},xv-u,\dots)} \nonumber\\
        &=\scalemath{0.9}{(x^2-1)[\det(\dots,w,a_{i+1},\dots) - \det(\dots,a_{i},w',\dots)],} \label{eq: M M'}
    \end{align}
    where $w,w'$ are column vectors given by
    \begin{equation}\label{eq:w and w'}
        w= \scalemath{0.8}{\left(\begin{array}{c}
             0 \\
             \hline
             x \\
             -2 \\
             x \\
             0 \\
             \hline
             0
        \end{array}\right)\begin{array}{l}
              \\ \\ \raisebox{3pt}{{\scriptsize\mbox{{$i$-th}}}} \\ \\ \\ \\
        \end{array}} 
        \qquad \textrm{ and } \qquad
        w'= \scalemath{0.8}{\left(\begin{array}{c}
             0 \\
             \hline
             0 \\
             x \\
             -2 \\
             x \\
             \hline
             0
        \end{array}\right)\begin{array}{l}
              \\ \\ \raisebox{3pt}{{\scriptsize\mbox{{$i$-th}}}} \\ \\ \\ \\
        \end{array}.}
    \end{equation}
    
    Now we work the right-hand side of Equation (\ref{eq:equiv skein}). The resulting product of $\psi_n(\beta)$ and $\psi_n(\gen{i})$ is:
    \begin{equation*}
    \scalemath{0.9}{
     \psi_{n}(\beta\gen{i})=
        \left(
            \begin{array}{c | c | c | c | c | c }
                A & B_{1}  
                    & xB_{1} - B_{2} + xB_{3}
                    & B_{3}
                    & B_{4}
                    & C \\
                \hline
                & \vdots & \vdots & \vdots & \vdots & \\
                D   & E_{k,1}  
                    & xE_{k,1} - E_{k,2} + xE_{k,3}
                    & E_{k,3}
                    & E_{k,4}
                    & F \\
                & \vdots & \vdots & \vdots & \vdots & \\
                \hline
                G   & H_{1}  
                    & xH_{1} - H_{2} + xH_{3}
                    & H_{3}
                    & H_{4}
                    & J
            \end{array}
       \right).
    }
    \end{equation*}
    In a similar fashion, we subtract the identity and apply the following sequence of elementary operations
    \begin{align*}
         \psi_{n}(\beta \gen{i}) - I_{n-1}  \xrightarrow{\scalemath{0.7}{C_{i,i-1} (-x)}} 
        \bullet & \xrightarrow{\scalemath{0.7}{C_{i,i+1} (-x)}}
        \bullet \xrightarrow{\scalemath{0.7}{C_{i}(-1)}} P,
    \end{align*}
    where $P$ coincides with $\psi_{n}(\beta) - I_{n-1}$ except for the $i$-th column. Namely, $P=(p_1,\dots,p_{n-1})$ with
    \begin{equation*}
        p_j=\left\{\begin{array}{cl}
            a_i - w & \textrm{ for } j=i,\\
            a_j & \textrm{ otherwise},
        \end{array}\right. \qquad (1\leq j \leq n-1)
    \end{equation*}
    where $w$ is as in Equation (\ref{eq:w and w'}). By the sequence of elementary operations, it follows that $\det(\psi_n(\beta\gen{i})-I_{n-1}) =-\det(P)$.
    
    For the other term in the right-hand side of (\ref{eq:equiv skein}), the resulting product of $\psi_n(\beta)$ and $\psi_n(\gen{i+1})$ is:
    \begin{equation*}
    \scalemath{0.9}{
     \psi_{n}(\beta\gen{i+1})=
        \left(
            \begin{array}{c | c | c | c | c | c }
                A & B_{1}  
                    & B_{2}
                    & xB_{2} - B_{3} + xB_{4}
                    & B_{4}
                    & C \\
                \hline
                & \vdots & \vdots & \vdots & \vdots & \\
                D   & E_{k,1}  
                    & E_{k,2}
                    & xE_{k,2} - E_{k,3} + xE_{k,4}
                    & E_{k,4}
                    & F \\
                & \vdots & \vdots & \vdots & \vdots & \\
                \hline
                G   & H_{1}  
                    & H_{2}
                    & xH_{2} - H_{3} + xH_{4}
                    & H_{4}
                    & J
            \end{array}
       \right).
    }
    \end{equation*}
    Following the similar procedure of subtract the identity and applying a sequence of elementary operations, we have
    \begin{align*}
         \psi_{n}(\beta \gen{i}) - I_{n-1}  \xrightarrow{\scalemath{0.7}{C_{i+1,i} (-x)}}
        \bullet & \xrightarrow{\scalemath{0.7}{C_{i+1,i+2} (-x)}}
        \bullet \xrightarrow{\scalemath{0.7}{C_{i+1}(-1)}} P',
    \end{align*}
    where $P'=(p'_1,\dots,p'_{n-1})$ with
    \begin{equation*}
        p'_j=\left\{\begin{array}{cl}
            a_{i+1} - w' & \textrm{ for } j=i+1,\\
            a_j & \textrm{ otherwise},
        \end{array}\right. \qquad (1\leq j \leq n-1)
    \end{equation*}
    and $w'$ is as in Equation (\ref{eq:w and w'}). By the sequence of elementary operations applied to $\psi_{n}(\beta \gen{i+1}) - I_{n-1}$, its determinant is computed as $\det(\psi_n(\beta\gen{i+1})-I_{n-1})=-\det(P')$. Calculating the right-hand side of Equation (\ref{eq:equiv skein}), we obtain
    \begin{equation}\label{eq:right skein equiv}
        \det(\psi_{n}(\beta\gen{i})-I_{n-1}) - \det(\psi_{n}(\beta\gen{i+1})-I_{n-1}) = -\det(P) + \det(P').
    \end{equation}
    Simplifying,
    \begin{align*}
        \scalemath{1.0}{-\det(P)+\det(P')} &= \scalemath{1.0}{-\det(\dots,a_{i}-w,a_{i+1},\dots) + \det(\dots,a_{i},a_{i+1}-w',\dots)} \\
        &=\scalemath{1.0}{\det(\dots,w,a_{i+1},\dots) - \det(\dots,a_{i},w',\dots)} \\
        &=\scalemath{1.0}{\frac{-\det(M)+\det(M')}{(x^2-1)}},
    \end{align*}
    where the last equality is given by (\ref{eq: M M'}). Therefore, by Equations (\ref{eq:left skein equiv}), (\ref{eq:right skein equiv}) we obtain Equation (\ref{eq:equiv skein}), which is equivalent to the skein relation (\ref{eq:skein rel}).
\end{proof}

\subsection{The invariant $\mathcal{Q}$}\label{subsec:invariant Q}

In subsection \ref{subsec:f_n} we investigated how functions $f_n:\B{n}\to\Zx$ behave under Gotin-Markov moves. It is well summarized in the following pictures:
\begin{enumerate}[label=\roman*),itemsep=6mm]
        \item
            $f_{n+1}\big(\, {\risSpdf{-9}{iR_B_}{}{25}{0}{0}} \, \big) = 0 = f_{n+1}\big( \, {\risSpdf{-9}{iL_B_}{}{25}{0}{0}}\, \big)$,
        \item
            $f_{n}\big(\, {\risSpdf{-22}{alpha-beta}{}{25}{0}{0}} \, \big) = f_{n}\big( \, {\risSpdf{-22}{beta-alpha}{}{25}{0}{0}} \, \big)$,
        \item
            $f_{n+1}\big( \, {\risSpdf{-12}{beta-stabR}{}{25}{0}{0}} \, \big) = f_{n}\big( \,  {\risSpdf{-9}{beta}{}{25}{0}{0}} \, \big) =
            f_{n+1}\big( \, {\risSpdf{-12}{beta-stabL}{}{25}{0}{0}} \, \big)$,
        \item
            $f_{n+1}\big( \, {\risSpdf{-22}{beta-hyperstab-r}{}{25}{0}{0}} \, \big) = x^{2i} f_{n}\big( \,  {\risSpdf{-9}{beta}{}{25}{0}{0}} \, \big) =
            f_{n+1}\big( \, {\risSpdf{-22}{beta-hyperstab-l}{}{25}{0}{0}} \, \big)$,
        \item
            $f_{n}\big( {\risSpdf{-9}{C121}{}{25}{0}{0}} \big) - f_{n}\big( {\risSpdf{-9}{C212}{}{25}{0}{0}} \big) = (x^2-1)\left(f_{n}\big( {\risSpdf{-9}{C1}{}{25}{0}{0}} \big) - f_{n}\big( {\risSpdf{-9}{C2}{}{25}{0}{0}} \big) \right)$,
    \end{enumerate}
    \vspace{2mm}
    for any $\alpha,\beta \in \B n$.
    
    From properties above, it is clear that if we have two equivalent doodles $D \sim D'$, and $\alpha,\beta$ two twins with closures $D$ and $D'$ respectively. Then, $\alpha \sim_G \beta$ are Gotin-Markov equivalent and the corresponding polynomials $f_n(\alpha)$ and $f_m(\beta)$ are the same up to a factor $x^{2k}$ for some integer $k$. Furthermore, by the skein relation (\ref{eq:skein rel}), all the polynomials of a doodle $D$ are polynomials in $\mathbb{Z}[x^2]$. In this way, it is natural to define our invariant as follows.
    
\begin{definition}
For a doodle $D\in \mathcal{D}$, let $I_D$ be the ideal in $\mathbb{Z}[x^2]$ generated by all the polynomials $f_n(\alpha)$ with $\hat{\alpha}\sim D$. We define $\mathcal{Q}:\mathcal{D}\longrightarrow \mathbb{Z}[x^2]$ given by
\begin{equation}
    \mathcal{Q}(D):=\gcd(I_D),
\end{equation}
where $\gcd$ is the generator of the ideal $I_D$.
\end{definition}

As a direct consequence of all the previous discussion, we have our main result.

\begin{theorem}\label{thm:inv Q}
    The function $\mathcal{Q}$ is a polynomial invariant of oriented doodles. 
\end{theorem}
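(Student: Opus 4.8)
The plan is to verify the two properties that make $\mathcal{Q}$ a polynomial invariant: that $\mathcal{Q}(D)$ is a well-defined element of $\mathbb{Z}[x^2]$ for every $D\in\mathcal{D}$, and that $\mathcal{Q}(D)=\mathcal{Q}(D')$ whenever $D\sim D'$. Once the first point is in place the second is almost formal, so essentially all of the argument consists in making sense of the ideal $I_D$ and of its gcd.

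First I would check that the generating set $\{f_n(\alpha):\hat\alpha\sim D\}$ of $I_D$ is nonempty and lies inside $\mathbb{Z}[x^2]$. Nonemptiness is precisely the Alexander theorem for doodles (Theorem~\ref{thm:alexander}). For the membership, the denominator $(-x)^{n-1}U_{n-1}(1/x)=P_{n-1}(x)$ lies in $\mathbb{Z}[x^2]$ by the Remark following Proposition~\ref{eq:Pn recurrencia}, and the numerator $\det(\psi_n(\alpha)-I_{n-1})$ is even in $x$: conjugating each $V_i$ by the diagonal sign matrix $\mathrm{diag}((-1)^1,\dots,(-1)^{n-1})$ carries $V_i(x)$ to $V_i(-x)$, so $\psi_n(\alpha)(x)$ and $\psi_n(\alpha)(-x)$ are conjugate and the determinant is unchanged by $x\mapsto -x$; that the quotient $f_n(\alpha)$ is moreover a genuine (integral) polynomial, hence an element of $\mathbb{Z}[x^2]$, is exactly what the discussion preceding the definition of $I_D$ records as a consequence of the skein relation of Proposition~\ref{prop:skein fn} together with the recurrence $P_n(x)=-2P_{n-1}(x)-x^2P_{n-2}(x)$. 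Thus $I_D$ is a genuine ideal of $\mathbb{Z}[x^2]$. Since $\mathbb{Z}[x^2]$ is a polynomial ring in one variable over $\mathbb{Z}$, hence a unique factorization domain and in particular a GCD domain, any nonempty family of its elements admits a greatest common divisor, unique up to a unit; the units being $\pm1$, the value $\mathcal{Q}(D)=\gcd(I_D)$ — understood as the gcd of the generators $f_n(\alpha)$, equivalently of all elements of $I_D$ — is a well-defined element of $\mathbb{Z}[x^2]$ up to sign, with $\mathcal{Q}(D)=0$ exactly when every such generator vanishes. Hence $\mathcal{Q}\colon\mathcal{D}\to\mathbb{Z}[x^2]$ is well defined.

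Invariance is then immediate. If $D\sim D'$ then, since $\sim$ is an equivalence relation, a twin $\alpha$ satisfies $\hat\alpha\sim D$ if and only if $\hat\alpha\sim D'$; the two generating families therefore coincide, so $I_D=I_{D'}$ and $\mathcal{Q}(D)=\mathcal{Q}(D')$. Thus $\mathcal{Q}$ descends to $\mathcal{D}/\!\sim$, i.e.\ it is a polynomial invariant of oriented doodles.

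It is worth noting where the rest of the machinery is really used, even though it is not logically required for the bare statement. By the Gotin--Markov theorem (Theorem~\ref{thm:Markov}) the twins with closure $D$ form a single $\sim_G$-class, and Lemmas~\ref{lem:fn M0 y M1}, \ref{lem:fn simple stab} and \ref{lem:hyper-stabil} show that each Gotin--Markov move multiplies $f_n$ by a power of $x^2$ (with move $M0$ returning $0$). Hence any two nonzero generators of $I_D$ differ by a factor $x^{2k}$ with $k\in\mathbb{Z}$, so $I_D$ is actually principal and $\mathcal{Q}(D)$ is literally its generator; in practice $\mathcal{Q}(D)$ can be read off from a single twin representative of $D$ up to the $x^{2k}$ ambiguity that the gcd normalizes away. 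The only real obstacle in the whole development is the one already dealt with in the previous sections — the polynomiality of $f_n$ (via the skein relation and the rigidity of doodle diagrams) and the long elementary row/column manipulations in Lemmas~\ref{lem:fn simple stab} and \ref{lem:hyper-stabil}; granting these, Theorem~\ref{thm:inv Q} is a short synthesis of the results established above.
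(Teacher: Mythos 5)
Your proposal is correct and follows essentially the same route as the paper, whose own proof of Theorem \ref{thm:inv Q} is just the one-line observation that it is a direct consequence of the preceding discussion (properties (i)--(v) of $f_n$, the Alexander and Gotin--Markov theorems, and the definition of $I_D$). You merely make explicit a few points the paper leaves implicit --- nonemptiness of the generating set, the evenness of $\det(\psi_n(\alpha)-I_{n-1})$ in $x$, and the fact that the $x^{2k}$-relations among generators make $I_D$ principal so that ``the generator of $I_D$'' is meaningful --- which is a welcome tightening rather than a different argument.
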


\begin{remark}
    Despite the function $f_n$ satisfies the skein relation (\ref{eq:skein rel}), it is important to notice that the invariant $\mathcal{Q}$ does not. It is a consequence of reducing the degrees of the polynomials by taking the $\gcd$.
\end{remark} 

Let us calculate a simple example of the invariant $\mathcal{Q}$.

\begin{example}
    According to \cite[Theorem~4.2]{BFKK18}, the first non-trivial doodle has 6 crossings, and is the Borromean doodle (see Figure \ref{fig:borromean}). Let us compute its polynomial under $\mathcal{Q}$. In Example \ref{example:f(t1t2)^3}, we compute two associated polynomials to the Borromean doodle through two Gotin-Markov equivalent twins. Furthermore, for any twin $\beta$ with closure the Borromean doodle, we have that $\deg (f_3((\gen{1}\gen{2})^3)) \leq \deg (f_m(\beta))$.
    Thus,
    \vspace{3mm}
    
    \begin{equation*}
        \mathcal{Q}\big( \, \risS{-.8\height}{borromean}{}{35}{0}{0} \, \big) = x^4 - 2x^2 + 1.
    \end{equation*}
    
    \vspace{2mm}
\end{example}

\begin{remark}
    For a doodle $D$, if there exists a twin $\alpha_0$ with closure the minimal representative $D_0$ of $D$, then $\mathcal{Q}(D)$ is obtained with the corresponding twin $\alpha_0$, i.e., $\mathcal{Q}(D)= f_n(\alpha_0)$. In general, not every minimal representative of a doodle is realizable immediately by the closure of a twin, might be necessary to produce new double points in the diagram.
\end{remark}

Analogously to the classical Alexander polynomial of links, the invariant $\mathcal{Q}$ vanishes on multi-component doodles with `unlinked' components.

\begin{proposition}
    Let $D=D_1 \bigsqcup D_2$ be a doodle with two disjoint components, then $\mathcal{Q}(D)=0$.
\end{proposition}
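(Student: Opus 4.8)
The plan is to realize $D$ as the closure of a single conveniently chosen twin on which $f_n$ vanishes, and then to transfer this vanishing to every twin whose closure is equivalent to $D$ using the Gotin-Markov theorem. By Theorem~\ref{thm:alexander} we may write $D_1=\widehat{\mu}$ with $\mu\in\B p$ a word in the generators $\gen 1,\dots,\gen{p-1}$, and $D_2=\widehat{\nu}$ with $\nu\in\B q$. Put $n=p+q$ and let $\gamma\in\B n$ be the twin obtained by drawing $\mu$ on the first $p$ strands and $\nu$ on the last $q$ strands; concretely, $\gamma$ is the product of the image of $\mu$ under the $q$-fold right inclusion $\iota^R$ with the image of $\nu$ under the $p$-fold left inclusion $\iota^L$, the latter being a word in $\gen{p+1},\dots,\gen{n-1}$. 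These two sub-words commute, and $\gen p$ never occurs, so no arc joins the two groups of strands; closing $\gamma$ therefore produces $\widehat{\mu}\sqcup\widehat{\nu}=D_1\sqcup D_2=D$.

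The heart of the argument is the computation of $\psi_n(\gamma)$. Every matrix $V_i$ with $i\le p-1$ is the identity outside the rows and columns $\{1,\dots,p\}$, so the first factor of $\psi_n(\gamma)$ is block diagonal for the splitting $\{1,\dots,p\}\sqcup\{p+1,\dots,n-1\}$, with top-left $p\times p$ block equal to $\psi_{p+1}(\iota^R(\mu))$, which by (\ref{eq:matrix *beta}) equals $\left(\begin{smallmatrix}\psi_p(\mu)&0\\ v^R(\mu)&1\end{smallmatrix}\right)$. Dually, every $V_j$ with $j\ge p+1$ is the identity outside $\{p,\dots,n-1\}$, so the second factor is block diagonal for $\{1,\dots,p-1\}\sqcup\{p,\dots,n-1\}$, with bottom-right $q\times q$ block $\psi_{q+1}(\iota^L(\nu))=\left(\begin{smallmatrix}1& v^L(\nu)\\ 0&\psi_q(\nu)\end{smallmatrix}\right)$ by (\ref{eq:matrix *beta[1]}). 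The two blocks overlap in exactly the single index $p$, where each contributes the diagonal entry $1$; multiplying and rewriting everything in the refined block structure $\{1,\dots,p-1\}\sqcup\{p\}\sqcup\{p+1,\dots,n-1\}$ yields
\[
\psi_n(\gamma)=\begin{pmatrix}\psi_p(\mu)&0&0\\ v^R(\mu)&1& v^L(\nu)\\ 0&0&\psi_q(\nu)\end{pmatrix},
\qquad
\psi_n(\gamma)-I_{n-1}=\begin{pmatrix}\psi_p(\mu)-I_{p-1}&0&0\\ v^R(\mu)&0& v^L(\nu)\\ 0&0&\psi_q(\nu)-I_{q-1}\end{pmatrix}.
\]
The $p$-th column of the right-hand matrix is identically zero, hence $\det(\psi_n(\gamma)-I_{n-1})=0$ and $f_n(\gamma)=0$. (The degenerate cases $p=1$ or $q=1$ cause no trouble: an empty block simply disappears while the zero column remains; for $p=1$ this is a special case of Lemma~\ref{lem:fn M0 y M1}.)

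To finish, let $\alpha\in\B m$ be any twin with $\widehat{\alpha}\sim D$. By Theorem~\ref{thm:Markov}, $\alpha$ is Gotin-Markov equivalent to $\gamma$, and hence, by Lemmas~\ref{lem:fn M0 y M1}, \ref{lem:fn simple stab} and \ref{lem:hyper-stabil}, $f_m(\alpha)=x^{2k}f_n(\gamma)=0$ for some $k\in\mathbb{Z}$. Thus every generator of the ideal $I_D\subseteq\mathbb{Z}[x^2]$ is zero, so $I_D=(0)$ and $\mathcal{Q}(D)=\gcd(I_D)=0$.

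The step I expect to be most delicate is the block decomposition of $\psi_n(\gamma)$: one has to check with care that the supports of the $V_i$ coming from $\mu$ meet those coming from $\nu$ in exactly the index $p$, and that the contribution there is the diagonal entry $1$, since this is precisely what forces the $p$-th column of $\psi_n(\gamma)-I_{n-1}$ to vanish. This calls for a little attention to the off-by-one bookkeeping and to the three possible shapes ($V_1$-type, interior, and $V_{n-1}$-type) of the defining matrices, but once the supports are identified the computation is routine.
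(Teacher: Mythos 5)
Your proof is correct and follows essentially the same route as the paper: both realize $D$ as the closure of a twin of the form $\iota^R(\mu)\,\iota^L(\nu)$, compute the same block matrix for $\psi_n$ via Equation (\ref{eq:matrix *beta}) and Remark \ref{rem:b[1]}, observe that the middle column of $\psi_n(\gamma)-I_{n-1}$ vanishes, and then transfer the vanishing to all Gotin-Markov equivalent twins. The block-overlap bookkeeping you flag as delicate is exactly what the paper's displayed matrix encodes, so no gap remains.
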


\begin{proof}
    If one of the components is the trivial doodle, the result follows easily from the skein relation (\ref{eq:skein rel}), or by definition of $f_n$ and Lemma \ref{lem:fn M0 y M1}. Let $\alpha\in \B{n}$ and $\beta\in\B{m}$ be two twins such that $\widehat{\alpha^{-1}}=D_1$ and $\widehat{\beta}=D_2$, then adding vertical strands on the left or right, we have $\iota^R(\alpha),\iota^L(\beta) \in \B{n+m}$ such that $\widehat{\iota^R(\alpha)\iota^L(\beta)} = \widehat{\iota^L(\beta)\iota^R(\alpha)} = D$.
    By Equation (\ref{eq:matrix *beta}) and Remark \ref{rem:b[1]} it follows
    \begin{equation*}
        \psi_{n+m}(\iota^R(\alpha)\iota^L(\beta))=\left(\begin{array}{ccc}
            \psi_n(\alpha) & 0 & 0 \\
             v^R(\alpha) & 1 & v^L(\beta) \\
             0 & 0 & \psi_m(\beta)
        \end{array}\right).
    \end{equation*}
    Thus, the $n$-th column of $\psi_{n+m}(\iota^R(\alpha)\iota^L(\beta))-I_{n+m-1}$ is zero and $f_{n+m}(\iota^R(\alpha)\iota^L(\beta))$ vanishes. Since every polynomial associated to the doodle is the same, up to a factor $x^{2k}$ for some integer $k$, it follows $\mathcal{Q}(D)=0$.
\end{proof}

\section{Some Computations}\label{sec:Computations}

In this section we compute the invariant $\mathcal{Q}$ for many doodles found in the literature \cite{BFKK18,FT79,Kh97,Mer99-2,Mer03,V99}. The results are summarized in a table in Subsection \ref{subsec:table}.

\subsection{Families of doodles}

In \cite{BFKK18}, Bartholomew-Fenn-Kamada-Kamada give a geometric construction of infinite families of doodles from polygons. We only recall one of those examples in detail and the rest is described vaguely or in terms of twins. We refer to \cite{BFKK18} for the original constructions.

\begin{example}
    Define the doodle $B_n$ as follows. Start with two concentric $n$-gons with vertices $X_1,\dots,X_n$ and $Y_1,\dots,Y_n$, respectively, and add the missing edges to construct the triangles $X_{i}Y_{i}X_{i+1}$ for $1\leq i\leq n$ cyclically modulo $n$ (see Figure \ref{fig:n-poppy}). All vertices of the resulting diagram have valency four, and it is natural how to smooth the edges to obtain the doodle $B_n$. By the symmetry of the diagram, it is easy to see that $\widehat{(\gen{1}\gen{2})^n}$ is $B_n$ for any $n$. In particular, when $n$ is divisible by 3, the twin $(\gen{1}\gen{2})^n$ induces the trivial permutation, i.e., is an element of the pure twin group in 3 strands. Thus, the number of components of $B_n$ is three if $n$ is divisible by 3, called the \textit{$n$-generalized Borromean} doodle; otherwise, $B_n$ has one component, called the \textit{$n$-poppy} doodle.
    
    Computations prompt the invariant $\mathcal{Q}$ never vanishes on this family and its restriction to the family is very strong such that distinguishes, i.e., if $\mathcal{Q}(B_n)=\mathcal{Q}(B_m)$, then $n=m$.
\end{example}

\begin{figure}[ht]
    \begin{minipage}[t]{.45\textwidth}
        \centering
         \includegraphics[scale=1]{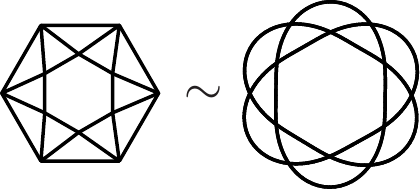}
        \subcaption{6-Borromean}
    \end{minipage}
    \hfill
    \begin{minipage}[t]{.45\textwidth}
        \centering
        \includegraphics[scale=1]{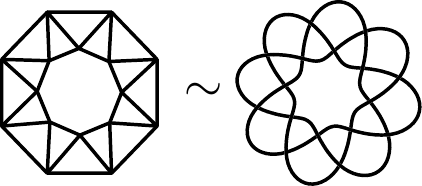}
        \subcaption{8-poppy}
    \end{minipage}
    \caption{Generalized Borromean and $n$-poppy doodles.} \label{fig:n-poppy}
\end{figure}

\begin{example}
    The doodle $C^1_n$ is constructed from the previous example $D_n$ adding a circle as in Figure \ref{subfig:C^1_3}. In terms of twins, $C^1_n$ is the closure of the twin $(\gen{1}\gen{2}\gen{3}\gen{2})^n$. The doodle $C^1_n$ has four components if $n$ is divisible by 3, otherwise $C^1_n$ has two components.
    
    The doodle $C^1_n$ can be generalized to the doodle $D_n$ with $r$ circles added. Written in twins, define the doodle $C^r_n$ as the closure of the twin $(\gen 1 (\gen 2 \cdots \gen {r+1}) \gen {r+2} (\gen {r+1} \cdots \gen 2) )^n$. In this case, the doodle $C^r_n$ has $r+3$ components if $n$ is divisible by 3, otherwise $C^r_n$ has $r+1$ components.
\end{example}

\begin{figure}[ht]
    \begin{minipage}[t]{.3\textwidth}
        \centering
         \includegraphics[scale=1]{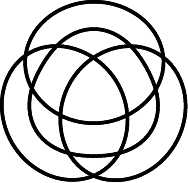}
        \subcaption{$C^1_3$}\label{subfig:C^1_3}
    \end{minipage}
    \hfill
    \begin{minipage}[t]{.3\textwidth}
        \centering
        \includegraphics[scale=1]{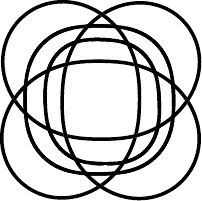}
        \subcaption{$C^2_4$}
    \end{minipage}
    \hfill
    \begin{minipage}[t]{.3\textwidth}
        \centering
         \includegraphics[scale=1]{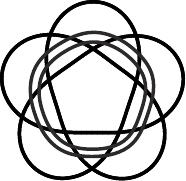}
        \subcaption{$C^2_5$}
    \end{minipage}
    \caption{Doodles $C^r_n$.}\label{fig:Cn with circles}
\end{figure}

Testing the invariant under different families of doodles, unfortunately we found $\mathcal{Q}$ vanishes for an infinite family of doodles.

\begin{proposition}\label{prop:Cnr = 0}
    For integers $r\geq 1$ and $n\geq 3$, $\mathcal{Q}(C^r_n)=0$.
\end{proposition}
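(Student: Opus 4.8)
The plan is to produce, for each $r\ge 1$ and $n\ge 3$, one explicit twin with closure $C^r_n$ whose $f$-value is already $0$. Since any two twins with equivalent closures are Gotin--Markov equivalent and the corresponding values of $f$ then differ only by a monomial factor $x^{2k}$, $k\in\mathbb{Z}$ (Lemmas \ref{lem:fn M0 y M1}, \ref{lem:fn simple stab} and \ref{lem:hyper-stabil}, as recorded after Example \ref{example:f(t1t2)^3}), the vanishing of a single such value forces the ideal $I_{C^r_n}$ to be $(0)$, hence $\mathcal{Q}(C^r_n)=0$. The twin I would use is $\alpha=w^n\in\B{r+3}$ with $w=\gen 1(\gen 2\cdots\gen {r+1})\gen {r+2}(\gen {r+1}\cdots\gen 2)$, so that $\widehat{\alpha}=C^r_n$ by definition. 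Since $\psi_{r+3}$ is a representation (Proposition \ref{prop:psi is rep}) we have $\psi_{r+3}(\alpha)=\psi_{r+3}(w)^n$, so everything reduces to finding a nonzero vector fixed by $\psi_{r+3}(w)$: if $\psi_{r+3}(w)v=v$ with $v\ne 0$, then $\psi_{r+3}(\alpha)v=v$, so $(\psi_{r+3}(\alpha)-I_{r+2})v=0$, whence $\det(\psi_{r+3}(\alpha)-I_{r+2})=0$ and therefore $f_{r+3}(\alpha)=0$.

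To exhibit that fixed vector, note first that $w=\gen 1\cdot\iota^L(\gen {1,r})$, which is immediate from the definition of $\gen {1,i}$; here $\gen {1,r}=(\gen 1\cdots\gen r)\,\gen {r+1}\,(\gen r\cdots\gen 1)\in\B{r+2}$ is the conjugate of the involution $\gen {r+1}$ by $g:=\gen 1\cdots\gen r$. Choose a nonzero $v'$ with $\psi_{r+2}(\gen {1,r})\,v'=v'$ -- for instance $v'=\psi_{r+2}(g)\,e_1$, which works because $\psi_{r+2}(\gen {1,r})=\psi_{r+2}(g)\,V_{r+1}\,\psi_{r+2}(g)^{-1}$ and $e_1$ is fixed by $V_{r+1}$ -- and set $v:=(0,v')\in\Lambda^{r+2}$. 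Then $V_1 v=v$, since $V_1=\psi_{r+3}(\gen 1)$ acts as the identity on vectors with vanishing first coordinate, while by Remark \ref{rem:b[1]}
\begin{equation*}
\psi_{r+3}\big(\iota^L(\gen {1,r})\big)\,v=\begin{pmatrix}1 & v^L(\gen {1,r})\\ 0 & \psi_{r+2}(\gen {1,r})\end{pmatrix}\begin{pmatrix}0\\ v'\end{pmatrix}=\begin{pmatrix}v^L(\gen {1,r})\cdot v'\\ v'\end{pmatrix}.
\end{equation*}
The lower block equals $v'$ by the choice of $v'$, and the upper entry vanishes: Corollary \ref{cor:row *b}(b) exhibits $v^L(\gen {1,r})$ as a nonzero scalar multiple of a fixed row vector of Chebyshev values multiplied on the right by $\psi_{r+2}(\gen {1,r})-I_{r+1}$, so $v^L(\gen {1,r})\cdot v'$ is proportional to that row applied to $(\psi_{r+2}(\gen {1,r})-I_{r+1})\,v'=0$. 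Hence $\psi_{r+3}(\iota^L(\gen {1,r}))\,v=v$ and $\psi_{r+3}(w)\,v=V_1 v=v$, so $\psi_{r+3}(w)$ fixes the nonzero vector $v$, as required.

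Assembling the pieces: $f_{r+3}(w^n)=0$ for every $n\ge 3$ and $r\ge 1$, and since every twin with closure $C^r_n$ is Gotin--Markov equivalent to $w^n$, each generator of $I_{C^r_n}$ equals $x^{2k}f_{r+3}(w^n)=0$ for a suitable $k\in\mathbb{Z}$; thus $I_{C^r_n}=(0)$ and $\mathcal{Q}(C^r_n)=\gcd(I_{C^r_n})=0$.

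The step carrying the real content -- everything else being the formal identity $\psi_{r+3}(w^n)=\psi_{r+3}(w)^n$ and the elementary fact that an involution conjugate to $\gen {r+1}$ fixes a nonzero vector -- is the observation, extracted from Corollary \ref{cor:row *b}(b), that the Chebyshev row $v^L(\gen {1,r})$ automatically annihilates every vector in $\ker(\psi_{r+2}(\gen {1,r})-I_{r+1})$; this is precisely what makes the ``new'' coordinate $v^L(\gen {1,r})\cdot v'$ vanish and thus promotes $(0,v')$ to a genuine eigenvector of $\psi_{r+3}(w)$. I would therefore single out the factorisation $v^L(\gen {1,r})=c\cdot\phi\cdot(\psi_{r+2}(\gen {1,r})-I_{r+1})$, with $c\ne 0$ a scalar and $\phi$ the row of Chebyshev values of Corollary \ref{cor:row *b}, as the crux; the only other care needed is checking $w=\gen 1\cdot\iota^L(\gen {1,r})$ directly from the definition of $\gen {1,i}$.
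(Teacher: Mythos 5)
Your argument is correct, and it reaches the key point --- $\det(\psi_{r+3}(\alpha)-I_{r+2})=0$ for the standard twin $\alpha=w^n$ representing $C^r_n$ --- by a different mechanism than the paper. The paper proves, by induction on $r$ and repeated use of Lemma \ref{lem:braid rel estimation}, that $\psi_{r+3}(w^n)-I_{r+2}$ has $r+1$ columns that are proportional (the last $r+1$ columns are $x^{r}c_{r+2},\dots,xc_{r+2},c_{r+2}$), which kills the determinant directly. You instead exhibit a single nonzero fixed vector of $\psi_{r+3}(w)$, built from the decomposition $w=\gen 1\cdot\iota^L(\gen{1,r})$, the conjugacy $\gen{1,r}=g\,\gen{r+1}\,g^{-1}$, and the observation that the row $v^L(\beta)$ of Remark \ref{rem:b[1]} factors through $\psi_{r+2}(\beta)-I_{r+1}$ (Proposition \ref{prop:vector F}(b) / Corollary \ref{cor:row *b}(b)), so that the extra coordinate created by $\iota^L$ automatically vanishes on $\ker(\psi_{r+2}(\gen{1,r})-I_{r+1})$. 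The two arguments are close cousins: the paper's column relation $C_j=xC_{j+1}$ is exactly the statement that $e_j-xe_{j+1}$ is fixed by $\psi_{r+3}(w^n)$, so the paper in effect produces an $r$-dimensional fixed subspace while you produce one fixed vector; either suffices. What your route buys is that it avoids the inductive matrix computation entirely and makes the $n$-independence transparent (a fixed vector of $\psi(w)$ is fixed by all powers), at the cost of relying on the Chebyshev annihilation identity rather than on Lemma \ref{lem:braid rel estimation}. Two small points of hygiene: the identity $V_{r+1}e_1=e_1$ should be checked separately for $r=1$ (where the matrices are $2\times 2$ and there is no identity block), which it does satisfy; and the scalar $\U{n-1}$ you divide by is a nonzero element of $\Zx$, so the factorisation of $v^L$ is legitimate over the field of fractions. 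Neither affects the validity of the proof.
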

\begin{proof}
    The doodle $C^r_n$ is the closure of the twin $\alpha_r^n=(\gen 1 (\gen 2 \cdots \gen {r+1}) \gen {r+2} (\gen {r+1} \cdots \gen 2) )^n$. Applying induction over $r$ and using iteratively Lemma \ref{lem:braid rel estimation}, the matrix $\psi_{r+3}(\alpha_r)^n - I_{r+2}$ has $r+1$ columns equal up to a power of $x$. More precisely,
    $$\psi_{r+3}(\alpha_r)^n - I_{r+2}=(c_1, x^{r}c_{r+2},x^{r-1}c_{r+2},\dots,xc_{r+2},c_{r+2}),$$ 
    written in columns. Hence $\det(\psi_{r+3}(\alpha_r)^n - I_{r+2})=0$ and the result follows.
\end{proof}

\begin{example}
    Another infinite family of doodles related to $C^r_n$ (by the geometric construction given in \cite{BFKK18}) is the family $D^1_n$. It consists of a circle as the skeleton and an array of $n$ trivial doodles $O_1,\dots,O_n$ overlapping as in Figures \ref{subfig:D_3^1},\ref{subfig:D_4^1}. Doodles $D^r_n$ are a natural generalization of $D^1_n$ by adding $r$ concentric circles in the skeleton. We do not know a general formula of $D^r_n$ written in twins, but for small values of $r$ and $n$, we have the invariant $\mathcal{Q}$ vanishes.

\begin{figure}[ht]
    \begin{minipage}[t]{.3\textwidth}
        \centering
         \includegraphics[scale=1]{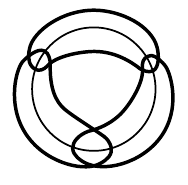}
        \subcaption{$D^1_3$} \label{subfig:D_3^1}
    \end{minipage}
    \hfill
    \begin{minipage}[t]{.3\textwidth}
        \centering
        \includegraphics[scale=1]{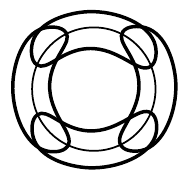}
        \subcaption{$D^1_4$} \label{subfig:D_4^1}
    \end{minipage}
    \hfill
    \begin{minipage}[t]{.3\textwidth}
        \centering
         \includegraphics[scale=1]{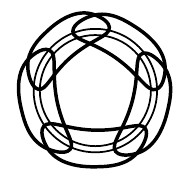}
        \subcaption{$D^2_5$}
    \end{minipage}
    \caption{Doodles $D^r_n$.} \label{fig:Dn with circles}
\end{figure}
    
\end{example}

\subsection{Table}\label{subsec:table}
We follow a format very similar to the table of knots given in \cite{Ada04}. The number in the first line following the picture of the doodle, stands for the number of crossings and components of the minimal representative. The second line is the twin representation of the doodle, where the number $k$ denotes the generator $\gen{k}$. The last line codified the polynomial invariant $\mathcal{Q}(D) \in \mathbb{Z}[x^2]$, the first number in curly brackets denotes the half of the maximum degree of the polynomial and the next sequence in parenthesis denotes the coefficients in even powers, from higher to lower degree. For instance, $\{7\}(1,2,-1,-2,1)$ denotes the polynomial $x^{14} + 2x^{12} - x^{10} - 2x^{8} + x^{6}$. 

\begin{center}
    \textbf{One component doodles}
\end{center}

\begin{multicols}{2}

\noindent
$\scalemath{1}{{\tabulinesep=2mm
\begin{tabu}{c l}
    \raisebox{-.5\height}{\includegraphics[width=2.5cm]{trivial}} &  \begin{array}{l} : 0^1 \\ : 1 \in\B 1 
    \end{array}\\
    \multicolumn{2}{l}{ : \{0\}(1) } \\ 
\end{tabu}}}$

\noindent
$\scalemath{1}{{\tabulinesep=2mm
\begin{tabu}{c l}
    \raisebox{-.5\height}{\includegraphics[width=2.5cm]{4poopy}} &  \begin{array}{l} : 8^1 \\ : (1 2)^4  \end{array}\\
    \multicolumn{2}{l}{ : \{3\}(1,-4,4) } \\ 
\end{tabu}}}$

\noindent
$\scalemath{1}{{\tabulinesep=2mm
\begin{tabu}{c l}
    \raisebox{-.5\height}{\includegraphics[width=2.5cm]{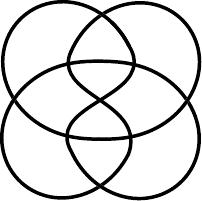}} &  \begin{array}{l} : 9^1 \\ : ( 3  2  1)^3  \end{array}\\
    \multicolumn{2}{l}{ : \{2\}(4,-4,1) } \\ 
\end{tabu}}}$

\noindent
$\scalemath{1}{{\tabulinesep=2mm
\begin{tabu}{c l}
    \raisebox{-.5\height}{\includegraphics[width=2.5cm]{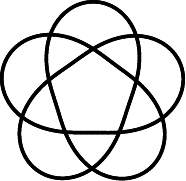}} &  \begin{array}{l} : 10^1 \\ : ( 1  2)^5 \end{array}\\
    \multicolumn{2}{l}{: \{4\}(1,-6,11,-6,1)} \\ 
\end{tabu}}}$

\noindent
$\scalemath{1}{{\tabulinesep=2mm
\begin{tabu}{c l}
    \raisebox{-.5\height}{\includegraphics[width=2.5cm]{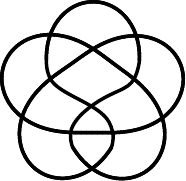}} &  \begin{array}{l} : 11^1 \\ : (21)^2(23)^2 123 \end{array}\\
    \multicolumn{2}{l}{ : \{4\}(1,-2,3,-2,1) } \\ 
\end{tabu}}}$

\noindent
$\scalemath{1}{{\tabulinesep=2mm
\begin{tabu}{c l}
    \raisebox{-.5\height}{\includegraphics[width=2.5cm]{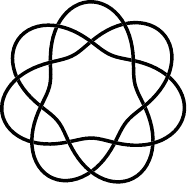}} &  \begin{array}{l} :  14^1 \\ : ( 1  2)^7  \end{array}\\
    \multicolumn{2}{l}{ : \{6\}(1,-10,37,-62,46,-12,1) } \\ 
\end{tabu}}}$

\noindent 
$\scalemath{1}{{\tabulinesep=2mm
\begin{tabu}{c l}
    \raisebox{-.5\height}{\includegraphics[width=2.5cm]{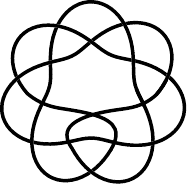}} &  \begin{array}{l} :  15^1 \\ :  54321 (432)^2 54321 \\ 2 (43)^2 \end{array}\\
    \multicolumn{2}{l}{ : \{7\}(1,2,-1,-2,1) } \\ 
\end{tabu}}}$

\noindent
$\scalemath{1}{{\tabulinesep=2mm
\begin{tabu}{c l}
    \raisebox{-.5\height}{\includegraphics[width=2.5cm]{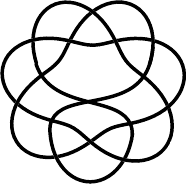}} &  \begin{array}{l} :  15^1 \\ : (12)^3 321 (32)^2  \end{array}\\
    \multicolumn{2}{l}{ : \{6\}(1,-6,13,-14,10,-4,1) } \\ 
\end{tabu}}}$

\noindent
$\scalemath{1}{{\tabulinesep=2mm
\begin{tabu}{c l}
    \raisebox{-.5\height}{\includegraphics[width=2.5cm]{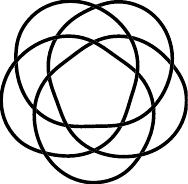}} &  \begin{array}{l} : 15^1 \\ : ( 3  2 1)^5  \end{array}\\
    \multicolumn{2}{l}{ : \{4\}(16,-48,44,-12,1) } \\ 
\end{tabu}}}$

\noindent
$\scalemath{1}{{\tabulinesep=2mm
\begin{tabu}{c l}
    \raisebox{-.5\height}{\includegraphics[width=2.5cm]{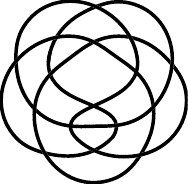}} &  \begin{array}{l} :  16^1 \\ : 321 (3214)^2 321 34  \end{array}\\
    \multicolumn{2}{l}{ : \{5\}(1,4,0,-8,4) } \\ 
\end{tabu}}}$

\noindent
$\scalemath{1}{{\tabulinesep=2mm
\begin{tabu}{c l}
    \raisebox{-.5\height}{\includegraphics[width=2.5cm]{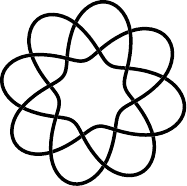}} &  \begin{array}{l} : 16^1 \\ : ( 1  2)^8 \end{array}\\
    \multicolumn{2}{l}{ : \{7\}(1,-12,56,-128,148,-80,16) } \\ 
\end{tabu}}}$

\noindent
$\scalemath{1}{{\tabulinesep=2mm
\begin{tabu}{c l}
    \raisebox{-.5\height}{\includegraphics[width=2.5cm]{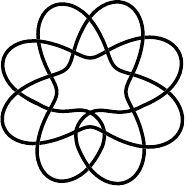}} &  \begin{array}{l} :  17^1 \\ : (4 3)^2 5 4 3 2 1 (4 3)^2  \\  (2 3)^2 5 4 3 2 1 2  \end{array}\\
    \multicolumn{2}{l}{ : \{9\}(1,- 2,1,2,- 2,1) } \\ 
\end{tabu}}}$

\noindent
$\scalemath{1}{{\tabulinesep=2mm
\begin{tabu}{c l}
    \raisebox{-.5\height}{\includegraphics[width=2.5cm]{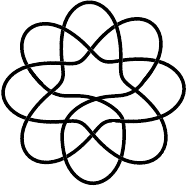}} &  \begin{array}{l} :  17^1 \\ : (12)^2 321 (32)^2 (12)^3 \end{array}\\
    \multicolumn{2}{l}{ : \{7\}(1,- 8,24,- 36,32,- 16,4) } \\ 
\end{tabu}}}$

\noindent
$\scalemath{1}{{\tabulinesep=2mm
\begin{tabu}{c l}
    \raisebox{-.5\height}{\includegraphics[width=2.5cm]{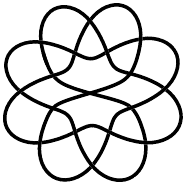}} &  \begin{array}{l} :  17^1 \\ : 7654321(43)^2 23 54 \\ 654 7654321 432 45 \\ 434 65 45 \end{array}\\
    \multicolumn{2}{l}{ : \{14\}(1,- 4,4) } \\ 
\end{tabu}}}$

\noindent
$\scalemath{1}{{\tabulinesep=2mm
\begin{tabu}{c l}
    \raisebox{-.5\height}{\includegraphics[width=2.5cm]{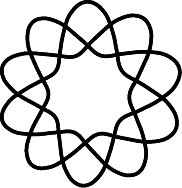}} &  \begin{array}{l} : 20^1 \\ : ( 1  2)^{10}  \end{array}\\
    \multicolumn{2}{l}{ : \{9\}(1,-16,106,-376,771,-920, } \\ 
    \multicolumn{2}{l}{ 610,-200,25) }
\end{tabu}}}$

\noindent
$\scalemath{1}{{\tabulinesep=2mm
\begin{tabu}{c l}
    \raisebox{-.5\height}{\includegraphics[width=2.5cm]{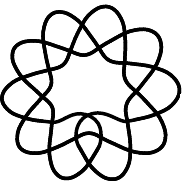}} &  \begin{array}{l} : 21^1 \\ : 12343 56 543212 \\ (543)^2 765432 (43)^2 \\ 5676 5434 \end{array}\\
    \multicolumn{2}{l}{ : \{14\}(1,2,- 2,1,- 2,1) } \\ 
\end{tabu}}}$

\noindent
$\scalemath{1}{{\tabulinesep=2mm
\begin{tabu}{c l}
    \raisebox{-.5\height}{\includegraphics[width=2.5cm]{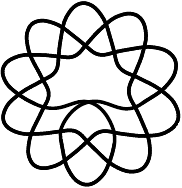}} &  \begin{array}{l} : 21^1 \\ : (12)^3 321323 (21)^4 2  \end{array}\\
    \multicolumn{2}{l}{ : \{9\}(1,- 12,58,- 148,223,- 212, } \\ %
    \multicolumn{2}{l}{ 130,- 48,9) } \\ 
\end{tabu}}}$

\noindent
$\scalemath{1}{{\tabulinesep=2mm
\begin{tabu}{c l}
    \raisebox{-.5\height}{\includegraphics[width=2.5cm]{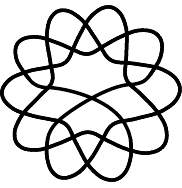}} &  \begin{array}{l} : 21^1 \\ : 543212 (43)^2 54321 \\ (43)^2 (23)^4 \end{array}\\
    \multicolumn{2}{l}{ : \{11\}(1,- 6,11,- 4,- 7,8,- 1,- 2,1) } \\ 
\end{tabu}}}$

\noindent
$\scalemath{1}{{\tabulinesep=2mm
\begin{tabu}{c l}
    \raisebox{-.5\height}{\includegraphics[width=2.5cm]{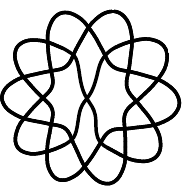}} &  \begin{array}{l} : 21^1 \\ : 7654323 (54)^2 654321 \\ 765432 (43)^2 5654321 \\ 543 \end{array} \\
    \multicolumn{2}{l}{ : \{12\}(4,-4,1) } \\ 
\end{tabu}}}$

\noindent
$\scalemath{1}{{\tabulinesep=2mm
\begin{tabu}{c l}
    \raisebox{-.5\height}{\includegraphics[width=2.5cm]{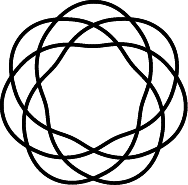}} &  \begin{array}{l} : 21^1 \\ : (1 2 3)^{7}  \end{array}\\
    \multicolumn{2}{l}{ :  \{6\}(64,- 320,592,- 496,184,- 24,1) } \\ 
\end{tabu}}}$

\noindent
$\scalemath{1}{{\tabulinesep=2mm
\begin{tabu}{c l}
    \raisebox{-.5\height}{\includegraphics[width=2.5cm]{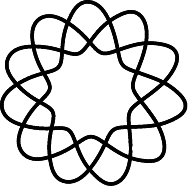}} &  \begin{array}{l} : 22^1 \\ : ( 1  2)^{11} \end{array}\\
    \multicolumn{2}{l}{ : \{10\}(1,-18,137,-574,1444, } \\ 
    \multicolumn{2}{l}{ -2232,2083,-1106,295,-30,1) } \\ 
\end{tabu}}}$

\noindent
$\scalemath{1}{{\tabulinesep=2mm
\begin{tabu}{c l}
    \raisebox{-.5\height}{\includegraphics[width=2.5cm]{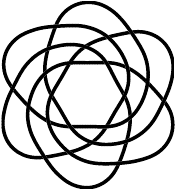}} &  \begin{array}{l} : 24^1 \\  : ( 1  2 3 4)^{6} \end{array}\\
    \multicolumn{2}{l}{ : \{10\}(1,-24,218,- 960,2251, } \\ 
    \multicolumn{2}{l}{ - 2880,1962,- 648,81) } \\ 
\end{tabu}}}$

\noindent
$\scalemath{1}{{\tabulinesep=2mm
\begin{tabu}{c l}
    \raisebox{-.5\height}{\includegraphics[width=2.5cm]{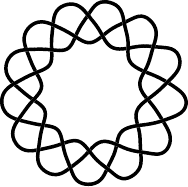}} &  \begin{array}{l} : 26^1 \\ : ( 1  2)^{13}  \end{array}\\
    \multicolumn{2}{l}{ : \{12\}(1,-22,211,-1158,4013, } \\ 
    \multicolumn{2}{l}{ -9142,13820,-13672,8518,-3108, } \\ 
    \multicolumn{2}{l}{ 581,-42,1) } \\ 
    \end{tabu}}}$

\noindent
$\scalemath{1}{{\tabulinesep=2mm
\begin{tabu}{c l}
    \raisebox{-.5\height}{\includegraphics[width=2.5cm]{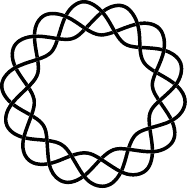}} &  \begin{array}{l} : 28^1 \\ : ( 1  2)^{14} \end{array}\\
    \multicolumn{2}{l}{: \{13\}(1,-24,254,-1560,6157, } \\ 
    \multicolumn{2}{l}{ - 16336, 29618,- 36568,30086, } \\ 
    \multicolumn{2}{l}{ - 15792,4900,- 784,49) } \\ 
\end{tabu}}}$

\noindent
$\scalemath{1}{{\tabulinesep=2mm
\begin{tabu}{c l}
    \raisebox{-.5\height}{\includegraphics[width=2.5cm]{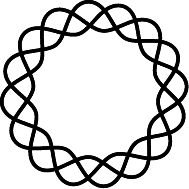}} &  \begin{array}{l} : 32^1 \\ : ( 1  2)^{16} \end{array}\\
    \multicolumn{2}{l}{: \{15\}(1,- 28,352,- 2624,12904, } \\ 
    \multicolumn{2}{l}{ - 44064,107104,- 186880,233108, } \\ 
    \multicolumn{2}{l}{ - 204528,122464,- 47616,11088, } \\ 
    \multicolumn{2}{l}{ - 1344,64) } \\ 
\end{tabu}}}$
\end{multicols}

\begin{center}
    \textbf{Multi-component doodles}    
\end{center}

\begin{multicols}{2}

\noindent
$\scalemath{1}{{\tabulinesep=2mm
\begin{tabu}{c l}
    \raisebox{-.5\height}{\includegraphics[width=2.5cm]{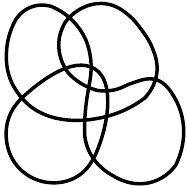}} &  \begin{array}{l} : 10^2 \\ : (12)^2 3 2 1 3 2 3 \end{array}\\
    \multicolumn{2}{l}{ : 0 } \\ 
\end{tabu}}}$

\noindent
$\scalemath{1}{{\tabulinesep=2mm
\begin{tabu}{c l}
    \raisebox{-.5\height}{\includegraphics[width=2.5cm]{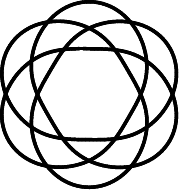}} &  \begin{array}{l} : 18^2 \\ : (1 2 3)^6 \end{array}\\
    \multicolumn{2}{l}{ : 0 } \\ 
\end{tabu}}}$

\noindent
$\scalemath{1}{{\tabulinesep=2mm
\begin{tabu}{c l}
    \raisebox{-.5\height}{\includegraphics[width=2.5cm]{borromean}} &  \begin{array}{l} : 6^3 \\  : ( 1  2)^{3} \end{array}\\
    \multicolumn{2}{l}{  : \{2\}(1,-2,1)  } \\ 
\end{tabu}}}$

\noindent
$\scalemath{1}{{\tabulinesep=2mm
\begin{tabu}{c l}
    \raisebox{-.5\height}{\includegraphics[width=2.5cm]{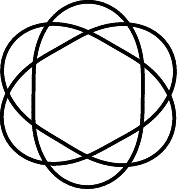}} &  \begin{array}{l} : 12^3 \\ : (1 2)^{6} \end{array}\\
    \multicolumn{2}{l}{ : \{5\}(1,-8,22,-24,9) } \\ 
\end{tabu}}}$

\noindent
$\scalemath{1}{{\tabulinesep=2mm
\begin{tabu}{c l}
    \raisebox{-.5\height}{\includegraphics[width=2.5cm]{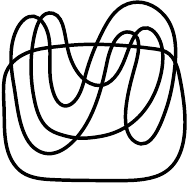}} &  \begin{array}{l} : 16^3 \\  : 4 3 2 3 1 2 3 2 4 3 2 1 \\ (2 3)^4  \end{array}\\
    \multicolumn{2}{l}{ : \{8\}(1,-6,15,-20,15,-6,1) } \\ 
\end{tabu}}}$ 

\noindent
$\scalemath{1}{{\tabulinesep=2mm
\begin{tabu}{c l}
    \raisebox{-.5\height}{\includegraphics[width=2.5cm]{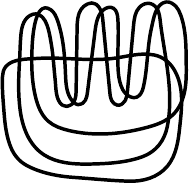}} &  \begin{array}{l} : 18^3 \\ :  ( 1  2)^{9} \end{array}\\
    \multicolumn{2}{l}{ : \{8\}(1,-14,79,-230,367,-314, } \\ 
    \multicolumn{2}{l}{ 130,-20,1) } \\ 
\end{tabu}}}$

\noindent
$\scalemath{1}{{\tabulinesep=2mm
\begin{tabu}{c l}
    \raisebox{-.5\height}{\includegraphics[width=2.5cm]{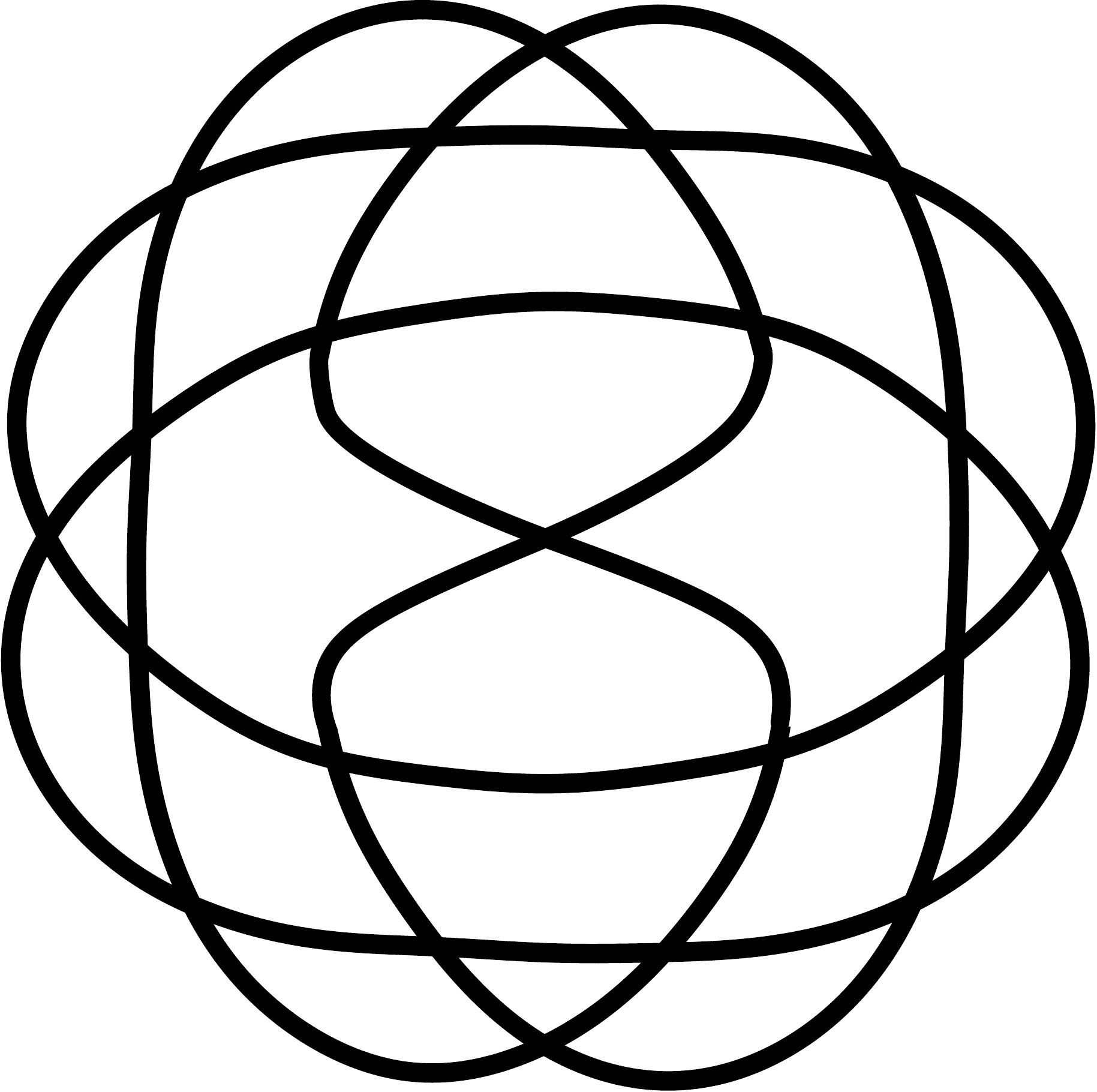}} &  \begin{array}{l} : 21^3 \\ : 12343 (23)^2 5 (432)^2 \\ 1234 3 543 234 \end{array}\\
    \multicolumn{2}{l}{ : \{11\}(1,- 4,4,4,- 10,4,4,- 4,1) } \\ 
\end{tabu}}}$

\noindent
$\scalemath{1}{{\tabulinesep=2mm
\begin{tabu}{c l}
    \raisebox{-.5\height}{\includegraphics[width=2.5cm]{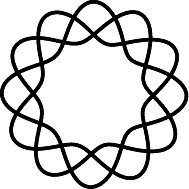}} &  \begin{array}{l} : 24^3 \\  : ( 1  2)^{12} \end{array}\\
    \multicolumn{2}{l}{ : \{11\}(1,-20,172,-832,2486,} \\ 
    \multicolumn{2}{l}{ - 4744,5776,-4532,1897,-420,36)} \\ 
\end{tabu}}}$

\noindent
$\scalemath{1}{{\tabulinesep=2mm
\begin{tabu}{c l}
    \raisebox{-.5\height}{\includegraphics[width=2.5cm]{D1_3}} &  \begin{array}{l} : 12^4 \\ : 54321 343 2 343 \\ 54321 323 43 \end{array}\\
    \multicolumn{2}{l}{ : 0 } \\ 
\end{tabu}}}$

\noindent
$\scalemath{1}{{\tabulinesep=2mm
\begin{tabu}{c l}
    \raisebox{-.5\height}{\includegraphics[width=2.5cm]{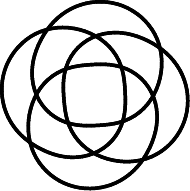}} &  \begin{array}{l} : 12^4 \\ : ( 1 2 3 )^{4} \end{array}\\
    \multicolumn{2}{l}{ : 0 } \\ 
\end{tabu}}}$

\noindent
$\scalemath{1}{{\tabulinesep=2mm
\begin{tabu}{c l}
    \raisebox{-.5\height}{\includegraphics[width=2.5cm]{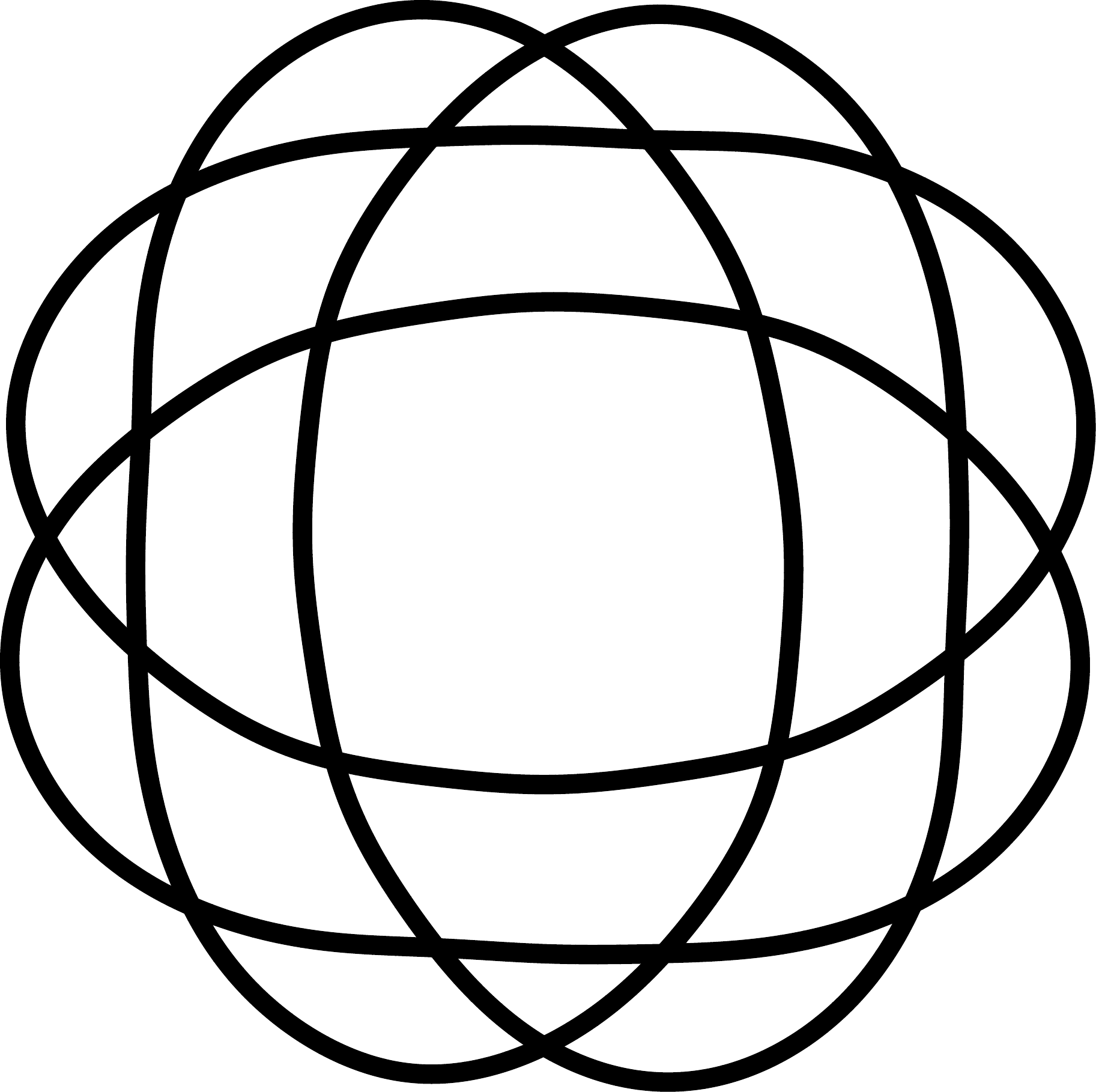}} &  \begin{array}{l} : 20^4 \\ : ( 1 3 2 3 2)^{4} \end{array}\\
    \multicolumn{2}{l}{ : 0 } \\ 
\end{tabu}}}$

\noindent
$\scalemath{1}{{\tabulinesep=2mm
\begin{tabu}{c l}
    \raisebox{-.5\height}{\includegraphics[width=2.5cm]{D1_4}} &  \begin{array}{l} : 16^5 \\  : 1234321 32123 432\\ 1 23 \end{array}\\
    \multicolumn{2}{l}{ : 0 } \\ 
\end{tabu}}}$

\noindent
$\scalemath{1}{{\tabulinesep=2mm
\begin{tabu}{c l}
     \raisebox{-.5\height}[2mm][0mm]{\Huge $C^{r-1}_n$} &  \begin{array}{l} : (2nr)^{r+2} \\ : ( 1 ( 2 \cdots  r)  {r+1} \\ ( r \cdots  2) )^n \\  \end{array}\\  
    \multicolumn{1}{l}{ : 0 }  & \textrm{ by Proposition \ref{prop:Cnr = 0}} \\ 
\end{tabu}}}$
\end{multicols}

\bibliographystyle{abbrv}
\bibliography{Biblio}

\medskip
{\small \sc Instituto de Matemáticas Universidad de Valparaíso,

Gran Breta\~{n}a 1111, Valpara\'{i}so 2340000, Chile.

{\tt marcelo.flores@uv.cl}

{\tt juyumaya@uvach.cl}
}

\bigskip\smallskip
{\small \sc Instituto de Matem\'aticas de la 
Universidad Nacional Aut\'onoma de M\'exico

Le\'on No.2, Altos, Oaxaca de Ju\'arez 68000, M\'exico

{\tt bruno@im.unam.mx}

{\tt croque@im.unam.mx}}

\end{document}